\documentclass[11pt]{amsart}

\usepackage[T1]{fontenc}
\usepackage{amssymb,mathrsfs}
\usepackage{palatino,microtype}
\usepackage{graphicx}
\usepackage{xcolor}
\usepackage{tikz}
\usetikzlibrary{arrows.meta,patterns,calc}
\usepackage{pgfplots}
\usepgfplotslibrary{groupplots}
\pgfplotsset{compat=1.18}
\usepackage[colorlinks=true,linkcolor=blue,citecolor=blue,urlcolor=blue]{hyperref}

\numberwithin{equation}{section}
\allowdisplaybreaks
\theoremstyle{plain}
\newtheorem{theorem}{Theorem}[section]
\newtheorem{proposition}[theorem]{Proposition}
\newtheorem{lemma}[theorem]{Lemma}
\newtheorem{corollary}[theorem]{Corollary}

\theoremstyle{remark}
\newtheorem{remark}[theorem]{Remark}

\begin{document}

\title[Triangle averages and maximal operators]{Asymmetric $L^p$-improving and maximal estimates for the triangle averaging operator}

\author{Alex Iosevich}
\thanks{The first author was supported in part by NSF grant DMS-2506858 and a Simons Fellowship in Mathematics in 2026.}
\address{Department of Mathematics, University of Rochester, Rochester, NY 14627}
\email{alex.iosevich@rochester.edu}

\author{Eyvindur Ari Palsson}
\address{Department of Mathematics, Virginia Tech, Blacksburg, VA 24061}
\email{palsson@vt.edu}

\author{Minh-Quy Pham}
\address{Department of Mathematics, Virginia Tech, Blacksburg, VA 24061}
\email{mqpham@vt.edu}

\subjclass[2020]{42B20, 42B25, 35S30, 28A75}
\keywords{Triangle averaging operator, $L^p$-improving estimate, bilinear maximal operator, Fourier integral operator, Minkowski dimension}
\date{\today}

\begin{abstract}
Let $d\ge2$ and $t>0$. We study the bilinear triangle averaging operator
$$\mathcal T_t(f,g)(x)=\int_{\mathcal I}f(x-ty)g(x-tz) d\sigma(y,z),$$
where $\sigma$ is the normalized surface measure on the manifold $\mathcal I=\{(y,z)\in\mathbb R^d\times\mathbb R^d:|y|=|z|=|y-z|=1\}$ of equilateral triangles of side length one with a vertex at the origin. We prove the strong-type asymmetric $L^p$-improving estimate
$$\mathcal T_1:L^{\frac{d+1}{d-1}}(\mathbb R^d)\times L^{\frac{d^2-1}{d^2-3d+4}}(\mathbb R^d)\longrightarrow L^{\frac{d+1}{2}}(\mathbb R^d)$$
in every dimension $d\ge4$. This enlarges the range of exponents $(p,q,r)$ for which the triangle averaging operator is known to map $L^p(\mathbb R^d)\times L^q(\mathbb R^d)$ to $L^r(\mathbb R^d)$, and symmetrization produces new symmetric estimates in every such dimension as well. The proof combines Sobolev smoothing estimates, obtained through a microlocal decomposition of the triangle incidence relation, with complex interpolation in an analytic family of averages over triangles of varying shape that contains the triangle averaging operator.

We also establish maximal estimates for triangle averages. Let $E\subset[1,2]$ be a nonempty set of upper Minkowski dimension $\beta$, and let $\mathcal T_E(f,g)=\sup_{t\in E}|\mathcal T_t(f,g)|$. We prove that
$$\|\mathcal T_E(f,g)\|_2\lesssim\|f\|_{H^{-s_1}}\|g\|_{H^{-s_2}}$$
whenever $s_1,s_2\ge0$, $s_1+s_2<\frac{d-3-\beta}{2}$ and $d>3+\beta$. This improves the previously known Sobolev smoothing estimate by half a derivative and lowers the dimension requirement. We also obtain asymmetric localized maximal estimates of the form $\mathcal T_E:L^p(\mathbb R^d)\times L^q(\mathbb R^d)\to L^r(\mathbb R^d)$ in an enlarged range of exponents.
\end{abstract}

\maketitle

\section{Introduction}

Let $d\ge2$, and let
$$\mathcal I=\{(y,z)\in\mathbb R^d\times\mathbb R^d:|y|=|z|=|y-z|=1\}$$
be the submanifold of $\mathbb R^{2d}$ consisting of all pairs $(y,z)$ for which $\{0,y,z\}$ is the set of vertices of an equilateral triangle of side length one. For $t>0$ we define the bilinear triangle averaging operator by
\begin{equation}\label{eq_tri_op}
\mathcal T_t(f,g)(x)=\int_{\mathcal I}f(x-ty)g(x-tz) d\sigma(y,z),
\end{equation}
where $\sigma$ denotes the normalized surface measure on $\mathcal I$. Thus $\mathcal T_t$ averages the product $f(x-ty)g(x-tz)$ over all equilateral triangles of side length $t$ with one vertex at $x$, as in Figure~\ref{fig:triangle-config}. Any two pairs of unit vectors with the same inner product differ by an orthogonal transformation, so the orthogonal group $O(d)$ acts transitively on $\mathcal I$, and $\sigma$ is the unique $O(d)$-invariant probability measure on $\mathcal I$. This description of $\sigma$ will be used several times.

\begin{figure}[htbp]
\centering
\begin{tikzpicture}[scale=1.05]
\coordinate (X) at (0,0);
\draw[gray!70,dashed] (X) circle (2.2);
\coordinate (Y) at ({2.2*cos(200)},{2.2*sin(200)});
\coordinate (Z) at ({2.2*cos(260)},{2.2*sin(260)});
\coordinate (Y2) at ({2.2*cos(40)},{2.2*sin(40)});
\coordinate (Z2) at ({2.2*cos(100)},{2.2*sin(100)});
\fill[blue!10] (X) -- (Y) -- (Z) -- cycle;
\draw[thick,blue!60!black] (X) -- (Y) -- (Z) -- cycle;
\fill[green!10] (X) -- (Y2) -- (Z2) -- cycle;
\draw[thick,green!45!black,densely dashed] (X) -- (Y2) -- (Z2) -- cycle;
\fill (X) circle (1.6pt);
\fill (Y) circle (1.6pt);
\fill (Z) circle (1.6pt);
\fill (Y2) circle (1.6pt);
\fill (Z2) circle (1.6pt);
\node[anchor=north west] at (0.08,-0.05) {$x$};
\node[anchor=east] at ($(Y)+(-0.08,0)$) {$x-ty$};
\node[anchor=north] at ($(Z)+(0,-0.08)$) {$x-tz$};
\node[anchor=west] at ($(Y2)+(0.08,0)$) {$x-ty'$};
\node[anchor=south] at ($(Z2)+(0,0.08)$) {$x-tz'$};
\node[blue!60!black] at ($(Y)!0.5!(Z)+(-0.25,-0.2)$) {$t$};
\node[gray!70!black,anchor=west] at (2.0,-1.5) {radius $t$};
\end{tikzpicture}
\caption{The operator $\mathcal T_t$ averages $f(x-ty)g(x-tz)$ over all equilateral triangles of side length $t$ with one vertex at $x$. The vertex $x-ty$ ranges over the sphere of radius $t$ about $x$ (dashed), and once it is chosen, the vertex $x-tz$ ranges over a sphere of dimension $d-2$. Two such triangles are shown.}
\label{fig:triangle-config}
\end{figure}

Averaging operators over curved manifolds have been studied extensively in harmonic analysis. The classical example is the spherical averaging operator
$$A_tf(x)=\int_{S^{d-1}}f(x-ty) d\sigma_{S^{d-1}}(y),$$
where $\sigma_{S^{d-1}}$ is the normalized surface measure on the unit sphere. Its $L^p$-improving properties were established by Strichartz \cite{Strichartz1970} (see also Littman \cite{Littman1973}), and the associated maximal operator was studied by Stein \cite{Stein1976} for $d\ge3$ and by Bourgain \cite{Bourgain1986} for $d=2$.

Bilinear spherical averages and the related maximal operators have attracted considerable attention in recent years \cite{GebaGreenleafIosevichPalssonSawyer2013,BarrionuevoGrafakosHeHonzikOliveira2018,JeongLee2020}. Subsequent developments include sparse domination and weighted estimates \cite{RoncalShrivastavaShuin2021,BorgesFosterOuPipherZhou2023,PalssonSovine2026}, lacunary and fractal-dilation maximal estimates \cite{BorgesFoster2024,BorgesFosterOu2025,ChoudharyShenShrivastava2026}, and endpoint estimates \cite{BhojakChoudharyShrivastavaShuin2024,Vergara2026}, and the survey \cite{BorgesSurvey2026} offers a broader account of the area.

Triangle averaging operators were studied in \cite{GreenleafIosevich2012,PalssonSovine2020,IosevichPalssonSovine2022}. Maximal estimates for simplices were obtained by Cook, Lyall and Magyar \cite{CookLyallMagyar2021}, with more recent developments by Borges and Foster \cite{BorgesFoster2024}, by Borges, Foster and Ou \cite{BorgesFosterOu2025}, and by Palsson and Sovine \cite{PalssonSovine2026}. These operators are closely connected with Falconer-type configuration problems \cite{GreenleafIosevich2012,GrafakosGreenleafIosevichPalsson2015}.

\subsection{Notation}
We write $\mathbb N_0=\{0,1,2,\ldots\}$, we use $\mathcal S(\mathbb R^n)$ for the Schwartz space, $\delta_0$ for the Dirac mass at the origin of the ambient Euclidean space and $\Gamma$ for the Gamma function, and we use the convention $\frac{1}{\infty}=0$. The Fourier transform is normalized by
$$\widehat f(\xi)=\int_{\mathbb R^n}e^{-2\pi ix\cdot\xi}f(x) dx,$$
and for $s\in\mathbb R$ the inhomogeneous Sobolev space $H^s(\mathbb R^n)$ carries the norm
$$\|f\|_{H^s(\mathbb R^n)}^2=\int_{\mathbb R^n}\langle\xi\rangle^{2s}|\widehat f(\xi)|^2 d\xi,$$
where $\langle\xi\rangle=(1+|\xi|^2)^{\frac12}$. For $s<0$ the elements of $H^s(\mathbb R^n)$ are understood as tempered distributions.

We write $\|f\|_p=\|f\|_{L^p(\mathbb R^d)}$, and for $0<r<1$ the symbol $\|\cdot\|_r$ denotes the usual $L^r$ quasi-norm. The H\"older conjugate of $1<p<\infty$ is $p'=\frac{p}{p-1}$, and $1'=\infty$, $\infty'=1$. The notation $X\lesssim Y$ means that $X\le CY$ for a constant $C$ independent of the functions and of the frequency indices under consideration, and the dependence of $C$ on geometric parameters and covering constants is indicated in the relevant statements. We write $X\approx Y$ when $X\lesssim Y$ and $Y\lesssim X$.

\subsection{Fixed-radius \texorpdfstring{$L^p$}{Lp}-improving estimates}

We first consider $L^p$-improving properties of the fixed-radius operator $\mathcal T_1$. Palsson and Sovine \cite{PalssonSovine2020} obtained H\"older type $L^p\times L^q\to L^r$ bounds for triangle averages in dimensions $d\ge5$, including quasi-Banach target spaces. Sharp $L^p$-improving estimates for the triangle averaging operator in the Banach range were obtained in the plane by Greenleaf, Iosevich, Krause and Liu \cite{GreenleafIosevichKrauseLiu2023}, and Iosevich, Palsson and Sovine \cite{IosevichPalssonSovine2022} subsequently extended these results to higher dimensions, establishing $L^p$-improving estimates for all $d\ge2$ that include
$$\mathcal T_1:L^{\frac{d+1}{d}}(\mathbb R^d)\times L^{\frac{d+1}{d}}(\mathbb R^d)\longrightarrow L^r(\mathbb R^d)$$
for $\frac{d+1}{2d}\le r\le1$. Unlike the planar results, these estimates are not expected to be sharp in general.

It is convenient to record such estimates through reciprocal exponents, and we say that $\mathcal T_1$ is bounded at the point $(\frac1p,\frac1q,\frac1r)$ if $\|\mathcal T_1(f,g)\|_r\lesssim\|f\|_p\|g\|_q$. To measure what our results add, we collect the estimates available from earlier work into a comparison region. Since $\mathcal T_1$ is an average with respect to a probability measure, it is bounded at $(0,0,0)$, $(1,0,1)$ and $(0,1,1)$. The pointwise bound $|\mathcal T_1(f,g)|\le\|g\|_\infty A_1|f|$, together with the spherical estimate $A_1:L^{\frac{d+1}{d}}\to L^{d+1}$ of Strichartz \cite{Strichartz1970} and Littman \cite{Littman1973}, shows that $\mathcal T_1$ is bounded at $(\alpha,0,b)$ and, by symmetry, at $(0,\alpha,b)$, where $\alpha=\frac{d}{d+1}$ and $b=\frac{1}{d+1}$, while the estimates of Iosevich, Palsson and Sovine quoted above correspond to the points $(\alpha,\alpha,1)$ and $(\alpha,\alpha,2\alpha)$. This leads to the polytope
$$\mathcal B_d=\operatorname{conv}\{(0,0,0),(1,0,1),(0,1,1),(\alpha,0,b),(0,\alpha,b),(\alpha,\alpha,1),(\alpha,\alpha,2\alpha)\}.$$

A second source of estimates is the product bound of Cook, Lyall and Magyar \cite{CookLyallMagyar2021}. Let $m=\lfloor\frac d2\rfloor$ and $\vartheta=\frac{m-1}{m}$. In the form recorded in \cite[Theorem~2.1]{IosevichPalssonSovine2022}, their estimate reads
\begin{equation}\label{eq:CLM-product}
|\mathcal T_1(f,g)|\lesssim(A_1|f|^{\frac{1}{\vartheta}})^{\vartheta}(A_1|g|^{\frac{1}{\vartheta}})^{\vartheta}.
\end{equation}
The dimensional restriction in that theorem is $d\ge km$, where $k,m\ge2$ are integers and $k=2$ for triangles, so our choice of $m$ is admissible for every $d\ge4$. The operator $A_1$ maps $L^{\frac{1}{a_1}}$ to $L^{\frac{1}{b_1}}$ for each pair $(a_1,b_1)$ in the set $\mathcal V_d=\{(0,0),(1,1),(\alpha,b)\}$, and therefore \eqref{eq:CLM-product} and H\"older's inequality show that $\mathcal T_1$ is bounded at every point of
\begin{equation}\label{eq:Gregion}
\mathcal G_d=\{(\vartheta a_1,\vartheta a_2,\vartheta(b_1+b_2)):(a_1,b_1),(a_2,b_2)\in\mathcal V_d\}.
\end{equation}

Duality and symmetry turn these estimates into further ones. If $\frac1r\le1$, then $\mathcal T_1$ is bounded at $(\frac1p,\frac1q,\frac1r)$ if and only if the trilinear form
$$\Lambda(f_1,f_2,f_3)=\int_{\mathbb R^d}\mathcal T_1(f_1,f_2)(x)f_3(x) dx$$
satisfies $|\Lambda(f_1,f_2,f_3)|\lesssim\|f_1\|_p\|f_2\|_q\|f_3\|_{r'}$, and we call $(\frac1p,\frac1q,1-\frac1r)$ the trilinear coordinates of the point. The form $\Lambda$ integrates $f_1(x-y)f_2(x-z)f_3(x)$ over all equilateral triangles $\{x-y,x-z,x\}$ of side length one in $\mathbb R^d$, and the natural measure on these triangles does not distinguish between their vertices, so $\Lambda$ is invariant under all permutations of $f_1,f_2,f_3$, as we verify in Lemma~\ref{lem:symmetry}. Permuting the trilinear coordinates of a point at which $\mathcal T_1$ is bounded with a Banach target space therefore produces another such point. Let $\mathcal C_d$ be the union of $\mathcal G_d$ with all triples obtained in this way from the points of $\mathcal G_d$ whose third coordinate is at most one, and define the comparison region
$$\widehat{\mathcal B}_d=\operatorname{conv}(\mathcal B_d\cup\mathcal C_d).$$
Lemma~\ref{lem:positive-interpolation} below allows us to interpolate between bounds for positive averages, including those with quasi-Banach targets, so $\mathcal T_1$ is bounded at every point of $\widehat{\mathcal B}_d$. We use this region as the benchmark for our results.

Set $U_d=\frac{d-1}{d+1}$ and $V_d=\frac{d^2-3d+4}{d^2-1}$. The following theorem provides a strong-type estimate outside $\widehat{\mathcal B}_d$ in every dimension $d\ge4$.

\begin{theorem}\label{thm:asymmetric}
Let $d\ge4$. Then the triangle averaging operator satisfies the strong-type estimate
\begin{equation}\label{eq:asymmetric}
\mathcal T_1:L^{\frac{d+1}{d-1}}(\mathbb R^d)\times L^{\frac{d^2-1}{d^2-3d+4}}(\mathbb R^d)\longrightarrow L^{\frac{d+1}{2}}(\mathbb R^d).
\end{equation}
Moreover, $\mathcal T_1$ is bounded from $L^p(\mathbb R^d)\times L^q(\mathbb R^d)$ to $L^r(\mathbb R^d)$ whenever $(\frac1p,\frac1q,\frac1r)\in\mathcal Q_d$, where
$$\mathcal Q_d=\operatorname{conv}(\widehat{\mathcal B}_d\cup\{(U_d,V_d,1-U_d),(V_d,U_d,1-U_d),(U_d,U_d,1-V_d)\}).$$
\end{theorem}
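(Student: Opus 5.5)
The second assertion will follow from the first. Suppose \eqref{eq:asymmetric} holds at the point $(U_d,V_d,\frac{2}{d+1})$. Since $\frac{2}{d+1}=1-U_d$, its trilinear coordinates are $(U_d,V_d,U_d)$. The target $L^{\frac{d+1}{2}}$ is Banach for $d\ge1$, so Lemma~\ref{lem:symmetry} lets us permute these trilinear coordinates. This gives the points $(V_d,U_d,1-U_d)$ and $(U_d,U_d,1-V_d)$. Lemma~\ref{lem:positive-interpolation} applies to the positive operator $\mathcal T_1$ and interpolates between all of these and $\widehat{\mathcal B}_d$, which yields boundedness on $\mathcal Q_d$. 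So the real work is \eqref{eq:asymmetric}.

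For \eqref{eq:asymmetric} I plan to use Stein's analytic interpolation, as the abstract indicates. The idea is to embed $\mathcal T_1$ in an analytic family $\mathcal T^{\zeta}$ of averages over triangles of varying shape. One concrete choice is to integrate over the pairs $(y,z)$ with $|y|=1$, $|z|=1$, and to replace the constraint $\delta(|y-z|^2-1)$ by the distribution
$$\frac{(|y-z|^2-1)_+^{\zeta-1}}{\Gamma(\zeta)}\,\chi,$$
with $\chi$ a smooth cutoff keeping the configuration away from degeneracies. This family equals $\mathcal T_1$ (up to normalization) at $\zeta=0$.

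At $\operatorname{Re}\zeta=1$ the kernel is bounded. The operator is then dominated by $A_1|f|\cdot A_1|g|$, up to the inner-product smoothing. Strichartz--Littman and H\"older give bounds at points built from $\mathcal V_d$, for instance $L^{\frac{d+1}{d}}\times L^{\infty}\to L^{d+1}$ and its combinations with $L^1\to L^1$. The constants grow at most exponentially in $\operatorname{Im}\zeta$ thanks to the $\Gamma$ factor.

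At negative $\operatorname{Re}\zeta=-\kappa$ we lose $L^p$ information. Here we use $L^2$ Sobolev smoothing: for fixed $g\in L^\infty$, the map $f\mapsto\mathcal T^{\zeta}(f,g)$ gains derivatives. This comes from the microlocal decomposition of the incidence relation. The incidence relation fibers over the sphere $|y|=1$, and the conditional measure on $z$ is a $(d-2)$-sphere with nonvanishing curvature. This should give a gain of $\frac{d-1}{2}$ derivatives in $f$ (spherical decay) plus some in $g$, and the negative power $\kappa$ then eats part of it.

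Concretely, I would aim for an estimate of the form
$$\|\mathcal T^{\zeta}(f,g)\|_{2}\lesssim\|f\|_{2}\|g\|_{p_0}\quad\text{at }\operatorname{Re}\zeta=-\tfrac{d-3}{2}+\epsilon,$$
or, better, an $L^2\times L^{\infty}\to L^2$ or $L^2\times L^2\to L^1$ type bound, obtained via Plancherel and the decay $|\widehat\sigma(\xi,\eta)|$ of the triangle measure. The three-line lemma should then land exactly on $(\frac{d-1}{d+1},\frac{d^2-3d+4}{d^2-1},\frac{2}{d+1})$. Solving for the interpolation parameter $\theta$ with $0=(1-\theta)(-\kappa)+\theta\cdot 1$ forces $\theta=\frac{\kappa}{1+\kappa}$. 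Matching $\frac1p=U_d$ suggests $\kappa=\frac{d-1}{2}$, so $\theta=\frac{d-1}{d+1}$, which is consistent with the Strichartz-type numerology. I would fix the endpoint exponents by this linear algebra, then check that $V_d$ comes out.

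The main obstacle is the negative-order endpoint. There one needs a genuinely bilinear $L^2$-based estimate with the full loss $\kappa$ in the shape parameter, uniform in $\operatorname{Im}\zeta$. The sharp estimate requires that the projections of the incidence relation have the right rank, so that the FIO is nondegenerate. Any singular set, such as collinear or degenerate configurations, must be cut out by $\chi$ and handled separately, using the positivity bounds from $\widehat{\mathcal B}_d$. I would first try a dyadic Littlewood--Paley decomposition in $\xi$ and $\eta$ separately, prove each piece's bound with a $2^{-j\epsilon}$ gain by $TT^*$, and sum.
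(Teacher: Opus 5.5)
Your reduction of the second assertion to \eqref{eq:asymmetric}, your analytic family (a Riesz distribution in the third side length, equal to $\delta$ at $\zeta=0$), and your right edge (domination by $A_1|f|\,A_1|g|$ plus Strichartz--Littman) are essentially the paper's. The gap is the left edge.

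First, your numerology is wrong. With $\frac1p=\frac12$ on the left edge and $\frac1p=\frac{d}{d+1}$ on the right edge, $\kappa=\frac{d-1}{2}$ and $\theta=\frac{d-1}{d+1}$ give $\frac1p=\frac{d^2+1}{(d+1)^2}\neq U_d$. Hitting $U_d$ forces $\kappa=\frac{d-3}{2}$, i.e.\ weight $\frac{2}{d-1}$ on the left edge. The bound needed there is then $L^2\times L^2\to L^2$ at $\operatorname{Re}\zeta=-\frac{d-3}{2}$, paid for by the full trilinear gain $s_1+s_2+s_3=\frac{d-3}{2}$ of Theorem~\ref{thm_trilinear_Sobolev_estimate}. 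A $\frac{d-1}{2}$ gain in $f$ alone with $g\in L^\infty$ is neither justified nor what is needed.

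That gain does not come from Fourier decay of $\widehat\sigma$, which is the route of \cite{BorgesFosterOu2025} and falls short. Nor is the obstruction a set of degenerate triangles that $\chi$ can remove: there are none near the equilateral shape. The obstruction is the conic lines $L_j$ in the fiber variable, where the partition $(j\mid i_1i_2)$ fails to be nondegenerate. The fix is to cover $N^*Z_{\vec t}$ by the three cones $\mathcal W_j$. Falling back on bounds from $\widehat{\mathcal B}_d$ for some part cannot work, since the target point lies outside $\widehat{\mathcal B}_d$ (Theorem~\ref{thm:fixed-region}).

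Also, with these edge data the interpolation lands at $(U_d,U_d,1-V_d)$, not at $(U_d,V_d,\frac{2}{d+1})$. No choice of right-edge product bounds from the hull of $\mathcal V_d$ reaches the asymmetric point directly. It comes from Lemma~\ref{lem:symmetry} applied after interpolating, because the shape family itself is not invariant under permuting the vertices.

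Second, and more seriously, you lose the endpoint. At $\operatorname{Re}\zeta=-\frac{d-3}{2}$ the $\frac{d-3}{2}$ shape derivatives exactly consume the $\frac{d-3}{2}$ derivatives of smoothing. So a Littlewood--Paley piece $(f_i,g_j)$ estimated naively has no decay, or even grows. This happens whether derivatives cost $2^{\max(i,j)}$, or you use the integer $M>\frac{d-3}{2}$ derivatives that a direct pairing with $k_\zeta$ requires. Working at $-\frac{d-3}{2}+\epsilon$ with $2^{-j\epsilon}$ gains only yields exponents near \eqref{eq:asymmetric}, never the strong-type bound at it.

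Two ideas are missing:
(i) The two parametrizations $z=cy+\sqrt{1-c^2}\,w$ and $y=cz+\sqrt{1-c^2}\,w$ let you put every $c$-derivative on the lower-frequency input. This gives $\|\partial_s^m[\chi(s)\,\mathcal U_{c_0+s}(f_i,g_j)]\|_2\lesssim2^{m\min(i,j)-\frac{d-3}{2}\max(i,j)}\|f_i\|_2\|g_j\|_2$ (Lemma~\ref{lem:shapederivatives}).
(ii) The homogeneity of $k_\zeta$ (Lemma~\ref{lem:rieszscale}) reduces the cost of the pairing from $2^{M\min(i,j)}$ to $2^{\frac{d-3}{2}\min(i,j)}$.
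Together they give the factor $2^{-\frac{d-3}{2}|i-j|}$. This is summable in $i-j$ by the discrete Young inequality precisely because $d\ge4$, and it yields the $L^2\times L^2\to L^2$ bound on the critical line itself.
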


The estimate \eqref{eq:asymmetric} says that $\mathcal T_1$ is bounded at $(U_d,V_d,1-U_d)$, a point whose trilinear coordinates are $(U_d,V_d,U_d)$. The other two new points, with trilinear coordinates $(V_d,U_d,U_d)$ and $(U_d,U_d,V_d)$, come from \eqref{eq:asymmetric} through the symmetry of the triangle, and averaging the three permutations leads to the following symmetric estimate.

\begin{corollary}\label{cor:symmetric-fixed}
Let $d\ge4$. Then
$$\mathcal T_1:L^{\frac{3(d^2-1)}{3d^2-7d+6}}(\mathbb R^d)\times L^{\frac{3(d^2-1)}{3d^2-7d+6}}(\mathbb R^d)\longrightarrow L^{\frac{3(d^2-1)}{7d-9}}(\mathbb R^d).$$
In particular,
$$\mathcal T_1:L^{\frac{d+3}{d}}(\mathbb R^d)\times L^{\frac{d+3}{d}}(\mathbb R^d)\longrightarrow L^{\frac{d+3}{3}}(\mathbb R^d).$$
\end{corollary}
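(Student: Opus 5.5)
The plan is to derive the corollary from Theorem~\ref{thm:asymmetric} by working in trilinear coordinates and interpolating. By Theorem~\ref{thm:asymmetric} together with Lemma~\ref{lem:symmetry}, the trilinear form $\Lambda$ is bounded at the three points with trilinear coordinates $(U_d,V_d,U_d)$, $(V_d,U_d,U_d)$ and $(U_d,U_d,V_d)$. All three targets are Banach: the third coordinates are $1-U_d=\frac{2}{d+1}\le1$ and $1-V_d\le1$. Since $\mathcal Q_d$ is convex and contains these points, it also contains their barycenter. Equivalently, Lemma~\ref{lem:positive-interpolation}, or ordinary multilinear complex interpolation since the targets are Banach, gives boundedness at the averaged point. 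Its trilinear coordinates are
$$\Bigl(\tfrac{2U_d+V_d}{3},\tfrac{2U_d+V_d}{3},\tfrac{2U_d+V_d}{3}\Bigr).$$

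Next I compute these coordinates explicitly. We have
$$2U_d+V_d=\frac{2(d-1)^2+d^2-3d+4}{d^2-1}=\frac{3d^2-7d+6}{d^2-1},$$
so $\frac1p=\frac1q=\frac{3d^2-7d+6}{3(d^2-1)}$. Hence $p=q=\frac{3(d^2-1)}{3d^2-7d+6}$, as claimed. The third trilinear coordinate equals $1-\frac1r$, which gives
$$\frac1r=1-\frac{3d^2-7d+6}{3(d^2-1)}=\frac{7d-9}{3(d^2-1)},$$
that is, $r=\frac{3(d^2-1)}{7d-9}$. This proves the first estimate.

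For the ``in particular'' statement, I first check a sanity case: at $d=4$ the exponents become $p=\frac{45}{26}$ and $r=\frac{45}{19}$. These do not equal $\frac{7}{4}$ and $\frac{7}{3}$, so the second estimate is not literally the first one. Instead it should follow by interpolating the first estimate with a known point of $\widehat{\mathcal B}_d$ and using convexity of $\mathcal Q_d$. The natural candidate is the trivial point $(0,0,0)$, or a symmetric point on the diagonal $\frac1p=\frac1q$ such as $(\alpha,\alpha,2\alpha)$.

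The target point has $(\frac1p,\frac1q,\frac1r)=(\frac{d}{d+3},\frac{d}{d+3},\frac{3}{d+3})$, which satisfies $\frac1r=\frac{3}{d}\cdot\frac1p$. The first estimate has
$$\frac{1/r}{1/p}=\frac{7d-9}{3d^2-7d+6}.$$
So I need a segment in the symmetric plane $\frac1p=\frac1q$ joining a point of $\mathcal Q_d$ to another point of $\mathcal Q_d$ through the target. The main obstacle, and the step I would actually check carefully, is to identify the right second endpoint. The first thing I would try is the Strichartz-type vertex: average the points $(\alpha,0,b)$ and $(0,\alpha,b)$, which gives the point $(\frac{\alpha}{2},\frac{\alpha}{2},b)$ in $\widehat{\mathcal B}_d$. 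I would then solve the linear system for the convex weight $\lambda$ so that
$$\lambda\Bigl(\tfrac{3d^2-7d+6}{3(d^2-1)},\tfrac{7d-9}{3(d^2-1)}\Bigr)+(1-\lambda)\Bigl(\tfrac{d}{2(d+1)},\tfrac{1}{d+1}\Bigr)=\Bigl(\tfrac{d}{d+3},\tfrac{3}{d+3}\Bigr).$$
Finally I would verify that $\lambda\in[0,1]$ for all $d\ge4$. If this fails, I would replace the second endpoint with $(\alpha,\alpha,1)$ or with points of $\mathcal G_d$. In every case the last step is to conclude by convexity of $\mathcal Q_d$ and Lemma~\ref{lem:positive-interpolation}.
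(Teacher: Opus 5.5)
\textbf{First estimate.} Your argument is correct and is exactly the paper's: the barycenter of the three points of $\mathcal O_d$ is $(c_d,c_d,1-c_d)$ with $c_d=\frac{3d^2-7d+6}{3(d^2-1)}$.

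\textbf{The ``in particular'' estimate.} This part is not proved, and none of the endpoints you propose can work. Both the first estimate and the target lie on the line
\[
\left\{\tfrac1p=\tfrac1q,\ \tfrac1p+\tfrac1r=1\right\},
\]
the line of points whose three trilinear coordinates are equal. Indeed, $c_d+(1-c_d)=1$ and $\frac{d}{d+3}+\frac{3}{d+3}=1$. These two points are distinct, so a segment from $(c_d,c_d,1-c_d)$ passes through the target only if its other endpoint lies on this same line, on the far side of the target.

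None of your candidates is on that line. The points $(0,0,0)$, $(\frac\alpha2,\frac\alpha2,b)$, $(\alpha,\alpha,1)$ and $(\alpha,\alpha,2\alpha)$, and every point of $\mathcal G_d$, either have $\frac1p\neq\frac1q$ or have $\frac1p+\frac1r\neq1$. For $(\frac\alpha2,\frac\alpha2,b)$, adding your two equations gives $(1-\lambda)\frac{d}{2(d+1)}=0$, so $\lambda=1$. The first equation would then force $c_d=\frac{d}{d+3}$, which is false. So your linear system is inconsistent; it is not merely solved by some $\lambda\notin[0,1]$. Your ratio computation, which measures distance from the origin, is the wrong invariant here; the relevant one is $\frac1p+\frac1r$.

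\textbf{Missing step.} You need an endpoint of $\mathcal Q_d$ on the symmetric line. The trivial bounds at $(0,0,0)$, $(1,0,1)$ and $(0,1,1)$ have trilinear coordinates $(0,0,1)$, $(1,0,0)$ and $(0,1,0)$. Their barycenter is the bound $L^3\times L^3\to L^{3/2}$, with trilinear coordinates $(\frac13,\frac13,\frac13)$. The identity
\[
c_d-\frac{d}{d+3}=\frac{2(d-3)^2}{3(d^2-1)(d+3)}>0,
\]
together with $\frac{d}{d+3}>\frac13$, places the target strictly between $(\frac13,\frac13,\frac23)$ and $(c_d,c_d,1-c_d)$. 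Lemma~\ref{lem:positive-interpolation} then gives the second estimate.

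Your own candidates can also be salvaged once the line is identified. The point $\frac{d+1}{2d+1}(\alpha,\alpha,1)$ lies on the segment from $(0,0,0)$ to $(\alpha,\alpha,1)$ and on the symmetric line, at $\frac1p=\frac{d}{2d+1}<\frac{d}{d+3}$. It would also work as the second endpoint, but it has to be found this way rather than by trying single points.
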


To see the improvement over earlier results concretely, consider $d=4$ and $d=5$. In these dimensions $\vartheta=\frac12$, and the proof of Theorem~\ref{thm:fixed-region} below shows that every point of $\widehat{\mathcal B}_d$ satisfies
$$1+\frac1p+\frac1q-\frac1r\le\frac{2d}{d+1}.$$
On the section $\frac1r=\frac{2}{d+1}$ this means that $\frac1p+\frac1q\le1$. The exponents of Theorem~\ref{thm:asymmetric} lie on the same section, but they satisfy $\frac1p+\frac1q=\frac{17}{15}$ when $d=4$ and $\frac1p+\frac1q=\frac54$ when $d=5$. The new estimates therefore lie strictly outside the comparison region, as Figure~\ref{fig:intro-fixed-compare} shows, and Theorem~\ref{thm:fixed-region} establishes the same conclusion in every dimension $d\ge4$.

\begin{figure}[htbp]
\centering
\begin{tikzpicture}
\begin{groupplot}[
    group style={group size=2 by 1, horizontal sep=2.2cm},
    width=0.42\textwidth,
    height=0.42\textwidth,
    xmin=-0.03, xmax=1.03,
    ymin=-0.03, ymax=1.03,
    axis lines=left,
    xlabel={$\frac{1}{p}$},
    ylabel={$\frac{1}{q}$},
    ylabel style={rotate=-90},
    xtick={0,0.5,1},
    ytick={0,0.5,1},
    grid=major,
    grid style={gray!25},
    enlargelimits=false,
    clip=false,
]

\nextgroupplot[title={$d=4$, $r=\frac{5}{2}$}]

\addplot[draw=red!75!black, fill=white!12, very thick, dashed]
coordinates {(0,{2/5}) ({2/5},0) ({17/20},0) ({17/25},{17/25}) (0,{17/20})} -- cycle;

\addplot[draw=blue!70!black, fill=blue!12, thick, dashed]
coordinates {(0,{17/20}) (0,{2/5}) ({2/5},0) ({17/20},0) ({4/5},{1/5}) ({1/5},{4/5})} -- cycle;

\addplot[draw=green!50!black, fill=green!20, pattern=north east lines, pattern color=green!50!black, thick]
coordinates {({4/5},{1/5}) ({3/5},{8/15}) ({8/15},{3/5}) ({1/5},{4/5})} -- cycle;

\addplot[draw=green!60!black, very thick]
coordinates {(0,{17/20}) (0,{2/5}) ({2/5},0) ({17/20},0) ({4/5},{1/5}) ({3/5},{8/15}) ({8/15},{3/5}) ({1/5},{4/5})} -- cycle;

\addplot[densely dotted, gray!60, domain=0:1] {x};

\addplot[only marks, mark=diamond*, mark size=2.8pt, color=green!50!black]
coordinates {({3/5},{8/15})};

\draw[green!50!black] (axis cs:{3/5},{8/15}) -- (axis cs:0.75,0.62);
\node[anchor=west, text=green!50!black] at (axis cs:0.75,0.62) {$\left(\frac{3}{5},\frac{8}{15}\right)$};

\nextgroupplot[title={$d=5$, $r=3$}]

\addplot[draw=red!75!black, fill=white!12, very thick, dashed]
coordinates {(0,{1/3}) ({1/3},0) ({13/15},0) ({13/18},{13/18}) (0,{13/15})} -- cycle;

\addplot[draw=blue!70!black, fill=blue!12, thick, dashed]
coordinates {(0,{13/15}) (0,{1/3}) ({1/3},0) ({13/15},0) ({5/6},{1/6}) ({1/6},{5/6})} -- cycle;

\addplot[draw=green!50!black, fill=green!20, pattern=north east lines, pattern color=green!50!black, thick]
coordinates {({5/6},{1/6}) ({2/3},{7/12}) ({7/12},{2/3}) ({1/6},{5/6})} -- cycle;

\addplot[draw=green!60!black, very thick]
coordinates {(0,{13/15}) (0,{1/3}) ({1/3},0) ({13/15},0) ({5/6},{1/6}) ({2/3},{7/12}) ({7/12},{2/3}) ({1/6},{5/6})} -- cycle;

\addplot[densely dotted, gray!60, domain=0:1] {x};

\addplot[only marks, mark=diamond*, mark size=2.8pt, color=green!50!black]
coordinates {({2/3},{7/12})};

\draw[green!50!black] (axis cs:{2/3},{7/12}) -- (axis cs:0.79,0.66);
\node[anchor=west, text=green!50!black] at (axis cs:0.79,0.66) {$\left(\frac{2}{3},\frac{7}{12}\right)$};

\end{groupplot}
\end{tikzpicture}
\caption{Sections of the fixed-radius boundedness regions at $\frac1r=\frac{2}{d+1}$ in dimensions $d=4$ and $d=5$, drawn in the $(\frac1p,\frac1q)$-plane. The blue polygons are the sections of the comparison region $\widehat{\mathcal B}_d$. The green hatched regions show the additional boundedness obtained from Theorem~\ref{thm:asymmetric} by symmetry and interpolation with the comparison estimates, so that the blue and green regions together form the corresponding sections of $\mathcal Q_d$. The diamond marks the estimate \eqref{eq:asymmetric}. The red dashed polygons are the outer regions determined by the necessary conditions of Proposition~\ref{prop:triangle-necessary}.}
\label{fig:intro-fixed-compare}
\end{figure}

\subsection{Sobolev smoothing estimates for localized maximal triangle averages}

We next consider maximal operators associated with triangle averages. For a nonempty set $E\subset[1,2]$ define
$$\mathcal T_E(f,g)(x)=\sup_{t\in E}|\mathcal T_t(f,g)(x)|.$$
The map $\mathcal T_E$ is sublinear in each input separately, and we call it localized because the dilation parameter is restricted to $E\subset[1,2]$. Its behavior depends on the size of $E$, which we measure by the upper Minkowski dimension. Writing $N(E,\delta)$ for the least number of intervals of length $\delta$ needed to cover $E$, we set
$$\beta=\overline{\dim}_{\mathrm M}E=\limsup_{\delta\downarrow0}\frac{\log N(E,\delta)}{\log\frac{1}{\delta}}.$$
Thus $0\le\beta\le1$, finite sets have $\beta=0$, intervals have $\beta=1$, and for every $\varepsilon>0$ there is a constant $C_{E,\varepsilon}$ such that
\begin{equation}\label{eq:minkowski-cover}
N(E,\delta)\le C_{E,\varepsilon}\delta^{-\beta-\varepsilon}
\end{equation}
for all $0<\delta\le1$. The constants in our maximal estimates may depend on these covering constants.

Palsson and Sovine \cite{PalssonSovine2020} obtained initial bounds for the full triangle maximal operator using spherical maximal estimates and interpolation. Cook, Lyall and Magyar \cite{CookLyallMagyar2021} established maximal bounds for nondegenerate simplices through pointwise estimates in terms of spherical maximal functions, and Borges and Foster \cite{BorgesFoster2024} subsequently studied lacunary triangle maximal operators. More recently, Borges, Foster and Ou \cite{BorgesFosterOu2025} investigated maximal triangle averages over fractal sets of radii and established the Sobolev smoothing estimate
$$\|\mathcal T_E(f,g)\|_{L^2(\mathbb R^d)}\lesssim\|f\|_{H^{-s_1}(\mathbb R^d)}\|g\|_{H^{-s_2}(\mathbb R^d)}$$
under the conditions $d>4+\beta$, $s_1,s_2\ge0$ and $s_1+s_2<\frac{d-4-\beta}{2}$, where $\beta$ is the upper Minkowski dimension of $E$. Sparse domination for maximal triangle averaging operators was developed further by Palsson and Sovine \cite{PalssonSovine2026}. Our first maximal result improves the estimate of Borges, Foster and Ou by half a derivative and lowers the dimension requirement from $d>4+\beta$ to $d>3+\beta$.

\begin{theorem}\label{thm:main_Sobolev_fractal_dilation}
Let $E\subset[1,2]$ be nonempty, with upper Minkowski dimension $\beta\in[0,1]$, and suppose that $d>3+\beta$. For every $s_1,s_2\ge0$ satisfying
$$s_1+s_2<\frac{d-3-\beta}{2}$$
we have
$$\|\mathcal T_E(f,g)\|_{L^2(\mathbb R^d)}\lesssim\|f\|_{H^{-s_1}(\mathbb R^d)}\|g\|_{H^{-s_2}(\mathbb R^d)}.$$
\end{theorem}

\subsection{\texorpdfstring{$L^p$}{Lp}-improving estimates for localized maximal triangle averages}

Combining the Sobolev smoothing estimate with the asymmetric fixed-radius bound, we obtain $L^p$-improving estimates for localized maximal triangle averages. For $0\le\theta\le1$ define
\begin{equation}\label{eq:asymmaxexponents}
\begin{aligned}
\frac{1}{p_\theta}&=\frac{\theta}{2}+(1-\theta)U_d,\\
\frac{1}{q_\theta}&=\frac{\theta}{2}+(1-\theta)V_d,\\
\frac{1}{r_\theta}&=\frac{\theta}{2}+(1-\theta)\frac{2}{d+1}.
\end{aligned}
\end{equation}
As $\theta$ increases from $0$ to $1$, the point $(\frac{1}{p_\theta},\frac{1}{q_\theta},\frac{1}{r_\theta})$ moves along the segment joining $(U_d,V_d,1-U_d)$, the point of Theorem~\ref{thm:asymmetric}, to $(\frac12,\frac12,\frac12)$, the point of the $L^2$ Sobolev estimate. For $d\ge4$ these exponents satisfy $1<p_\theta,q_\theta\le2\le r_\theta<\infty$, and in general the two input exponents are different.

\begin{theorem}\label{thm:mainasymmax}
Let $d\ge4$, and let $E\subset[1,2]$ be nonempty with $\overline{\dim}_{\mathrm M}E=\beta$. Suppose that $d>3+\beta$, and let
$$\theta_0=\frac{4\beta}{(d-3)(d+1-\beta)}.$$
If $\theta_0<\theta\le1$, then
\begin{equation}\label{eq:mainasymmax}
\mathcal T_E:L^{p_\theta}(\mathbb R^d)\times L^{q_\theta}(\mathbb R^d)\longrightarrow L^{r_\theta}(\mathbb R^d),
\end{equation}
where the exponents are defined in \eqref{eq:asymmaxexponents}. Moreover, define the reciprocal exponent triples $P_\theta=(\frac{1}{p_\theta},\frac{1}{q_\theta},\frac{1}{r_\theta})$ and $\widetilde P_\theta=(\frac{1}{q_\theta},\frac{1}{p_\theta},\frac{1}{r_\theta})$, and let
$$\mathcal R_{d,\beta}=\operatorname{conv}(\{(0,0,0)\}\cup\{P_\theta,\widetilde P_\theta:\theta_0<\theta\le1\}).$$
Then $\mathcal T_E:L^p(\mathbb R^d)\times L^q(\mathbb R^d)\to L^r(\mathbb R^d)$ whenever $(\frac1p,\frac1q,\frac1r)\in\mathcal R_{d,\beta}$.
\end{theorem}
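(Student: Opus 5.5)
The plan is to derive Theorem~\ref{thm:mainasymmax} by interpolating two frequency-localized estimates for the maximal operator: one at the point of Theorem~\ref{thm:asymmetric}, which loses a power of the frequency, and one at $L^2$ from the proof of Theorem~\ref{thm:main_Sobolev_fractal_dilation}, which gains a power. Decompose $f=\sum_j P_jf$ and $g=\sum_k P_kg$ with Littlewood--Paley projections, so that $\mathcal T_E(f,g)\le\sum_{j,k}\mathcal T_E(P_jf,P_kg)$.

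\textbf{Step 1 ($L^2$ estimate).} The proof of Theorem~\ref{thm:main_Sobolev_fractal_dilation} should give, for every $\varepsilon>0$,
\[
\|\mathcal T_E(P_jf,P_kg)\|_2\lesssim 2^{-\max(j,k)(\frac{d-3-\beta}{2}-\varepsilon)}\|f\|_2\|g\|_2 .
\]
This is the dyadic form of the Sobolev estimate before summation, with the decay placed on the larger frequency. If only the summed statement is available, one can take $s_1+s_2$ close to $\frac{d-3-\beta}{2}$ in Theorem~\ref{thm:main_Sobolev_fractal_dilation}. Applying it to $P_jf$ and $P_kg$, and distributing $s_1$ and $s_2$ according to which frequency dominates, then gives the same bound.

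\textbf{Step 2 (estimate at $P_0=(U_d,V_d,1-U_d)$).} For fixed $t$, Theorem~\ref{thm:asymmetric} and scaling give a bound uniform in $t\in[1,2]$. To pass to $E$, cover $E$ by $N(E,2^{-m})\lesssim 2^{m(\beta+\varepsilon)}$ intervals of length $2^{-m}$, where $m=\max(j,k)$. On each interval, replace the supremum by the value at the centre plus a term controlled by $\int|\partial_t\mathcal T_t|\,dt$. Differentiating in $t$ costs a factor $2^m$, but the interval length $2^{-m}$ compensates; this is the standard Sobolev embedding in $t$ at scale $2^{-m}$. Bounding the supremum over centres by an $\ell^{r}$ sum then gives
\[
\|\mathcal T_E(P_jf,P_kg)\|_{r_0}\lesssim 2^{m(\beta+\varepsilon)/r_0}\|f\|_{p_0}\|g\|_{q_0},
\qquad \frac{1}{r_0}=\frac{2}{d+1}.
\]
The derivative $\partial_t\mathcal T_t$ is again an average of the same type, with a smooth density involving $y\cdot\nabla f$ and $z\cdot\nabla g$. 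I would check that Theorem~\ref{thm:asymmetric} extends to smooth weights on $\mathcal I$; it should, since its proof is by analytic interpolation.

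\textbf{Step 3 (interpolation and summation).} Interpolate Steps 1 and 2 multilinearly, using real or complex interpolation for the maximal operator after linearizing the supremum with a measurable $t(x)$. At $P_\theta$ this gives the exponent
\[
-\theta\frac{d-3-\beta}{2}+(1-\theta)\frac{(d+1)\beta}{2}.
\]
This exponent is negative exactly when $\theta>\frac{(d+1)\beta}{d-2\beta+d\beta-2+...}$. I would verify that it simplifies to $\theta_0=\frac{4\beta}{(d-3)(d+1-\beta)}$; if the bookkeeping differs, the sharper gain in Step 1, which exploits the gap between $\min(j,k)$ and $\max(j,k)$, should account for the discrepancy. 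Geometric decay in $\max(j,k)$ alone does not control the double sum, so I need decay in both indices. I would obtain it from Step 1 using $\max(j,k)\ge\frac{j+k}{2}$, plus an extra factor of $2^{-|j-k|\delta}$. That factor comes from the trivial bound $\mathcal T_E(P_jf,P_kg)\le \|P_kg\|_\infty M|P_jf|$, where $M$ is the spherical-type maximal function, or alternatively from a Bernstein loss on the low-frequency piece. For $r_\theta\ge1$, Minkowski's inequality then sums the series. Finally, the symmetric points $\widetilde P_\theta$ follow by exchanging the roles of $f$ and $g$, which is legitimate because $\sigma$ is invariant under $(y,z)\mapsto(z,y)$. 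The convex hull with $(0,0,0)$ follows from the trivial $L^\infty$ bound and Lemma~\ref{lem:positive-interpolation}, since $\mathcal T_E$ is dominated by a positive operator.

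\textbf{Main obstacle.} I expect the main difficulty to be Step 2: proving that the $t$-derivative of the asymmetric $L^p$-improving bound loses only one derivative, uniformly in $t$. This has to hold at the exact asymmetric exponents. I would try to handle it by rerunning the analytic family argument from the proof of Theorem~\ref{thm:asymmetric} with frequency-localized inputs, which tracks the constants directly.
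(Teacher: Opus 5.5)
Your scheme works, and it reorders the paper's argument rather than replacing it. The paper (Theorem~\ref{thm:transfer}) interpolates at a \emph{fixed} radius. It proves $\|B^{i,j}_t(f,g)\|_{r_0}+2^{-k}\|\partial_tB^{i,j}_t(f,g)\|_{r_0}\lesssim\|f\|_{p_0}\|g\|_{q_0}$ and the $L^2$ analogue with the factor $2^{-\frac{d-3}{2}k}$. It then interpolates the bilinear operators $B^{i,j}_t$ and $2^{-k}\partial_tB^{i,j}_t$ separately by Riesz--Thorin. Only after that does it apply Lemma~\ref{lem:covering}, once, in $L^{r_\theta}$, paying $N(E,2^{-k})^{1/r_\theta}$. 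You instead apply the covering at both endpoints, which gives maximal bounds in $L^2$ and $L^{r_0}$. You then interpolate these after linearizing the supremum by a measurable $t(x)$. Since $N^{\theta/2+(1-\theta)/r_0}=N^{1/r_\theta}$, the two orders give the same loss and the same threshold. The paper's order just avoids the linearization and measurable-selection step.

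Three points in your execution need correcting.

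\textbf{The ``main obstacle'' is immediate.} By \eqref{eq:radius-derivatives}, $\partial_t\mathcal T_t(P_jf,P_kg)$ is a sum of averages with the bounded weights $y_\ell$ and $z_\ell$. Because $\sigma\ge0$, you have $|\mathcal T^a_t(F,G)|\le\|a\|_\infty\mathcal T_t(|F|,|G|)$. Theorem~\ref{thm:asymmetric}, scaling, and Bernstein in $L^{p_0}$ and $L^{q_0}$ then give $\|\partial_t\mathcal T_t(P_jf,P_kg)\|_{r_0}\lesssim2^{m}\|f\|_{p_0}\|g\|_{q_0}$, uniformly for $t\in[\frac12,\frac52]$. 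There is no need to extend Theorem~\ref{thm:asymmetric} to smooth weights or to rerun the analytic family.

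\textbf{The Step 3 exponent is miscomputed.} The endpoint loss is $2^{m\beta/r_0}$ with $\frac{1}{r_0}=\frac{2}{d+1}$. So the interpolated exponent is $-\theta\frac{d-3-\beta}{2}+(1-\theta)\frac{2\beta}{d+1}$, not $-\theta\frac{d-3-\beta}{2}+(1-\theta)\frac{(d+1)\beta}{2}$. Its negativity is exactly $\theta\frac{d-3}{2}>\frac{\beta}{r_\theta}$. The identity $(d+1)(d-3-\beta)+4\beta=(d-3)(d+1-\beta)$ turns this into $\theta>\theta_0$. No extra gain from the gap between $\min(j,k)$ and $\max(j,k)$ is needed.

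\textbf{Decay in $\max(j,k)$ alone already suffices.} Indeed $\sum_{j,k\ge0}2^{-\delta\max(j,k)}=\sum_{m\ge0}(2m+1)2^{-\delta m}<\infty$. The extra factor $2^{-\delta|j-k|}$ is unnecessary. The bound $\|P_kg\|_\infty M|P_jf|$ would not supply it anyway, since $\|P_kg\|_\infty$ grows like $2^{kd/q}\|g\|_q$ rather than decaying.
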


Theorem~\ref{thm:mainasymmax} establishes asymmetric $L^p$-improving estimates for localized maximal triangle averages over sets of radii of prescribed upper Minkowski dimension. The dimension range $d>3+\beta$ comes from our Sobolev smoothing theorem, whereas the corresponding result of Borges, Foster and Ou \cite{BorgesFosterOu2025} requires $d>4+\beta$. Two examples illustrate the gain. When $E=[1,2]$, so that $\beta=1$, Theorem~\ref{thm:mainasymmax} applies already in dimension $d=5$, while the corresponding result in \cite{BorgesFosterOu2025} requires $d>5$. Taking $\theta=\frac12$, which exceeds $\theta_0=\frac25$, we get
$$\mathcal T_{[1,2]}:L^{\frac{12}{7}}(\mathbb R^5)\times L^{\frac{24}{13}}(\mathbb R^5)\longrightarrow L^{\frac{12}{5}}(\mathbb R^5),$$
and interpolation with the estimate obtained by interchanging the two inputs leads to
$$\mathcal T_{[1,2]}:L^{\frac{16}{9}}(\mathbb R^5)\times L^{\frac{16}{9}}(\mathbb R^5)\longrightarrow L^{\frac{12}{5}}(\mathbb R^5).$$
If $\beta<1$, the theorem applies in dimension $d=4$, a case not covered by \cite{BorgesFosterOu2025} at all. For example, when $\beta=0$ and $\theta=\frac14$,
$$\mathcal T_E:L^{\frac{40}{23}}(\mathbb R^4)\times L^{\frac{40}{21}}(\mathbb R^4)\longrightarrow L^{\frac{40}{17}}(\mathbb R^4).$$

The asymmetric estimates of Theorem~\ref{thm:mainasymmax} form part of a larger sufficient region, which also draws on the one-sided spherical maximal estimates of Anderson, Hughes, Roos and Seeger \cite{AndersonHughesRoosSeeger2021}, the strong type endpoint for the spherical maximal function due to Lee and Zhao \cite{LeeZhao2026}, and the product estimate \eqref{eq:CLM-product} of Cook, Lyall and Magyar. To describe it, let
$$M_E^{\mathrm{sph}}h(x)=\sup_{t\in E}|A_th(x)|$$
be the spherical maximal operator over $E$, and for $0\le\beta\le1$ consider the points
$$\begin{aligned}
S_1&=(0,0), & S_2&=\left(\frac{d-1}{d-1+\beta},\frac{d-1}{d-1+\beta}\right),\\
S_3&=\left(\frac{d-\beta}{d-\beta+1},\frac{1}{d-\beta+1}\right), & S_4&=\left(\frac{d(d-1)}{d^2+1},\frac{d-1}{d^2+1}\right)
\end{aligned}$$
in the plane of reciprocal exponents $(a,b)$ of a linear operator from $L^{\frac1a}$ to $L^{\frac1b}$. Let $\mathscr S_{d,\beta}$ be the convex hull of the union of three sets, namely the interior of the quadrilateral $\operatorname{conv}\{S_1,S_2,S_3,S_4\}$, the diagonal segment $\{(a,a):0\le a<\frac{d-1}{d-1+\beta}\}$ and the single point $S_4$. Here and below $\operatorname{conv}$ denotes the convex hull itself, not its closure. In Subsection~\ref{sec:combined} we show that $M_E^{\mathrm{sph}}$ maps $L^{\frac1a}$ to $L^{\frac1b}$ for every $(a,b)\in\mathscr S_{d,\beta}$.

Writing points of the space of reciprocal exponents as $(x,y,z)$, we now introduce four families of such points. The first one is
$$\mathcal E_{d,\beta}=\left\{\left(x,y,\frac12\right):x\ge\frac12,y\ge\frac12,x+y<\frac32-\frac{3+\beta}{2d}\right\}.$$
The second, $\mathcal J^{\mathrm{all}}_{d,\beta}$, consists of the points $\theta(\frac12,\frac12,\frac12)+(1-\theta)(x_0,y_0,z_0)$ with $(x_0,y_0,z_0)\in\mathcal Q_d$, $0<x_0<1$, $0<y_0<1$, $0<z_0\le 1$, $0<\theta\le1$ and
\begin{equation}\label{eq:full-transfer-region}
\theta\frac{d-3}{2}>\beta\left(\frac{\theta}{2}+(1-\theta)z_0\right).
\end{equation}
The remaining two families are
$$\mathcal L_{d,\beta}=\{(a,0,b),(0,a,b):(a,b)\in\mathscr S_{d,\beta}\}$$
and
$$\mathcal P_{d,\beta}=\{(\vartheta a_1,\vartheta a_2,\vartheta(b_1+b_2)):(a_1,b_1),(a_2,b_2)\in\mathscr S_{d,\beta}\},$$
with $\vartheta$ as in \eqref{eq:CLM-product}. Finally, we set
\begin{equation}\label{eq:full-combined-region}
\mathcal C_{d,\beta}=\operatorname{conv}(\{(0,0,0)\}\cup\mathcal E_{d,\beta}\cup\mathcal J^{\mathrm{all}}_{d,\beta}\cup\mathcal L_{d,\beta}\cup\mathcal P_{d,\beta}).
\end{equation}
Each family has a simple origin. The points of $\mathcal E_{d,\beta}$ come from Theorem~\ref{thm:main_Sobolev_fractal_dilation} and Sobolev embedding, those of $\mathcal J^{\mathrm{all}}_{d,\beta}$ from transferring the fixed-radius bounds of Theorem~\ref{thm:asymmetric} to the maximal operator, as in Theorem~\ref{thm:mainasymmax}, those of $\mathcal L_{d,\beta}$ from the spherical maximal bounds applied to one input while the other one is bounded, and those of $\mathcal P_{d,\beta}$ from the product estimate \eqref{eq:CLM-product}. Since $\operatorname{conv}$ is not the closed convex hull, no strong-type endpoint is obtained merely by taking a limit of admissible exponents. Related refinements that use finer information about the geometry of the radius set appear in \cite{RoosSeeger2023}.

\begin{theorem}\label{thm:combined-localized}
Let $d\ge4$, and let $E\subset[1,2]$ be nonempty with $\overline{\dim}_{\mathrm M}E=\beta\in[0,1]$. Assume that $d>3+\beta$, and let $\mathcal C_{d,\beta}$ be the region defined in \eqref{eq:full-combined-region}. Then
$$\mathcal T_E:L^p(\mathbb R^d)\times L^q(\mathbb R^d)\longrightarrow L^r(\mathbb R^d)$$
whenever $(\frac1p,\frac1q,\frac1r)\in\mathcal C_{d,\beta}$.
\end{theorem}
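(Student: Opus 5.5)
The plan is to verify boundedness on each of the four families $\mathcal E_{d,\beta}$, $\mathcal J^{\mathrm{all}}_{d,\beta}$, $\mathcal L_{d,\beta}$, $\mathcal P_{d,\beta}$ and at $(0,0,0)$ separately. We then pass to the convex hull by an interpolation principle valid for positive maximal averages with possibly quasi-Banach targets. For that last step I would linearize the supremum: choose a measurable $t(x)\in E$, note that $|\mathcal T_{t(x)}(f,g)(x)|\le\mathcal T_{t(x)}(|f|,|g|)(x)$, and observe that the linearized operator is a positive bilinear operator with bounds uniform in the choice of $t(\cdot)$. Lemma~\ref{lem:positive-interpolation} (or its proof) then interpolates between any finitely many admissible points; since every point of the open convex hull is a finite convex combination of points of the families, this covers $\mathcal C_{d,\beta}$. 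The point $(0,0,0)$ is trivial because each $\mathcal T_t$ is an average against a probability measure.

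\emph{The family $\mathcal L_{d,\beta}$.} Use $\mathcal T_E(f,g)\le\|g\|_\infty M_E^{\mathrm{sph}}|f|$, and symmetrically with the roles of $f,g$ exchanged. The input needed is the claim that $M_E^{\mathrm{sph}}:L^{1/a}\to L^{1/b}$ for $(a,b)\in\mathscr S_{d,\beta}$. This is assembled from the Anderson--Hughes--Roos--Seeger bounds in the interior of the quadrilateral, Stein-type $L^p$ bounds on the diagonal for $p>\frac{d-1+\beta}{d-1}$, and the Lee--Zhao strong endpoint at $S_4$, all combined by linear interpolation.

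\emph{The family $\mathcal P_{d,\beta}$.} Take the supremum in \eqref{eq:CLM-product} applied at radius $t$ to get
\[
\mathcal T_E(f,g)\lesssim (M_E^{\mathrm{sph}}|f|^{1/\vartheta})^{\vartheta}(M_E^{\mathrm{sph}}|g|^{1/\vartheta})^{\vartheta}.
\]
Hölder's inequality with exponents $\frac{1}{\vartheta b_1}$ and $\frac{1}{\vartheta b_2}$, together with the sphere bounds for $\mathscr S_{d,\beta}$, then gives the points $(\vartheta a_1,\vartheta a_2,\vartheta(b_1+b_2))$.

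\emph{The family $\mathcal E_{d,\beta}$.} Apply Theorem~\ref{thm:main_Sobolev_fractal_dilation} and Sobolev embedding $L^{p}\hookrightarrow H^{-s}$ with $s=d(\frac1p-\frac12)$ for $p\le2$. The condition $s_1+s_2<\frac{d-3-\beta}{2}$ translates exactly into $x+y<\frac32-\frac{3+\beta}{2d}$.

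\emph{The family $\mathcal J^{\mathrm{all}}_{d,\beta}$.} This is the main step, carried out as in the proof of Theorem~\ref{thm:mainasymmax}. First decompose $f=\sum_j P_jf$ and $g=\sum_k P_kg$ into Littlewood--Paley pieces, and group the terms by $\ell=\max(j,k)$. For each frequency scale $2^\ell$, cover $E$ by $N(E,2^{-\ell})\lesssim 2^{\ell(\beta+\varepsilon)}$ intervals. On each interval, the derivative in $t$ of the localized operator costs a factor $2^{\ell}$, so by the fundamental theorem of calculus (or a Sobolev embedding in $t$) the supremum over the interval is controlled by the value at the left endpoint plus a term gaining back that factor. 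Summing over intervals in $\ell^{r}$, this gives two estimates for the piece of $\mathcal T_E$ at scale $2^\ell$:
\begin{itemize}
\item at the fixed-radius point $(x_0,y_0,z_0)\in\mathcal Q_d$, the bound $2^{\ell(\beta+\varepsilon)z_0}$, using Theorem~\ref{thm:asymmetric} uniformly in $t\in[1,2]$ by scaling;
\item at $(\frac12,\frac12,\frac12)$, the bound $2^{-\ell(\frac{d-3}{2}-\frac{\beta}{2}-\varepsilon)}$, taken from the frequency-localized $L^2$ estimate underlying Theorem~\ref{thm:main_Sobolev_fractal_dilation}.
\end{itemize}
Interpolating these two frequency-localized bounds gives the factor $2^{\ell(-\theta\frac{d-3}{2}+\beta(\frac{\theta}{2}+(1-\theta)z_0)+\varepsilon)}$. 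By \eqref{eq:full-transfer-region} this exponent is negative for small $\varepsilon$, so the sum over $\ell$ converges (in the $r$-th power if $r<1$).

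I expect this interpolation of frequency-localized pieces with quasi-Banach targets to be the main obstacle. It requires bilinear interpolation at fixed $\ell$ with multilinear real or complex methods, and care that the Littlewood--Paley projections are bounded on $L^{x_0^{-1}}$, which is why the restriction $0<x_0,y_0<1$ is imposed.
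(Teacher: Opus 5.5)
Your proposal is correct and matches the paper's proof. The paper also handles $\mathcal E_{d,\beta}$ through the Sobolev theorem and embedding, $\mathcal L_{d,\beta}$ and $\mathcal P_{d,\beta}$ through the spherical maximal bounds on $\mathscr S_{d,\beta}$ (via the marginal domination and the Cook--Lyall--Magyar product estimate), and $\mathcal J^{\mathrm{all}}_{d,\beta}$ through the transfer argument. It then extends to the convex hull by positive interpolation over a countable dense set of radii.

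The only difference is the order of steps in the transfer argument. The paper interpolates the fixed-$t$ bounds for $B_t^{i,j}$ and $2^{-k}\partial_tB_t^{i,j}$ first and applies the covering lemma once, at the interpolated exponent. You apply the covering lemma at each endpoint and then interpolate the maximal pieces. Both routes give the same factor $N(E,2^{-k})^{1/r}2^{-\theta\frac{d-3}{2}k}$, because $\frac1r=\frac\theta2+(1-\theta)z_0$. The paper's order avoids interpolating a linearized maximal operator, so the obstacle you anticipate does not arise there.
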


\begin{figure}[htbp]
\centering
\begin{tikzpicture}
\begin{groupplot}[
    group style={group size=2 by 1, horizontal sep=1.7cm},
    width=0.41\textwidth,
    height=0.41\textwidth,
    xmin=-0.025, xmax=0.89,
    ymin=-0.025, ymax=0.89,
    axis lines=box,
    xlabel={$\frac{1}{p}$},
    ylabel={$\frac{1}{q}$},
    ylabel style={rotate=-90},
    xtick={0,0.2,0.4,0.6,0.8},
    ytick={0,0.2,0.4,0.6,0.8},
    grid=major,
    grid style={gray!20},
    tick label style={font=\scriptsize},
    label style={font=\small},
    enlargelimits=false,
    clip=true
]

\nextgroupplot[title={$d=5$, $\beta=1$, $r=\frac{12}{5}$}]

\addplot[
    draw=none,
    fill=green!13,
    pattern=north east lines,
    pattern color=green!50!black,
    no marks
] coordinates {
    (0,{5/12})
    ({5/12},0)
    ({4/5},0)
    ({359/504},{143/504})
    ({209/336},{169/336})
    ({43/72},{9/16})
    ({9/16},{43/72})
    ({169/336},{209/336})
    ({143/504},{359/504})
    (0,{4/5})
} -- cycle;

\addplot[draw=green!55!black, very thick, no marks]
coordinates {
    (0,{4/5})
    (0,{5/12})
    ({5/12},0)
    ({4/5},0)
};

\addplot[draw=green!55!black, dashed, very thick, no marks]
coordinates {
    ({4/5},0)
    ({359/504},{143/504})
    ({209/336},{169/336})
    ({43/72},{9/16})
    ({9/16},{43/72})
    ({169/336},{209/336})
    ({143/504},{359/504})
    (0,{4/5})
};

\addplot[densely dotted, gray!65, domain=0.11:0.89, no marks]
{1-x};

\addplot[
    only marks,
    mark=diamond*,
    mark size=2.6pt,
    color=red!80!black
] coordinates {
    ({7/12},{13/24})
    ({13/24},{7/12})
};

\addplot[
    draw=red!85!black,
    dashed,
    line width=1.35pt,
    no marks,
    forget plot
] coordinates {
    (0,{4/5})
    ({5/12},{4/5})
    ({53/72},{53/72})
    ({4/5},{5/12})
    ({4/5},0)
};

\nextgroupplot[title={$d=6$, $\beta=1$, $r=\frac{28}{11}$}]

\addplot[
    draw=none,
    fill=green!13,
    pattern=north east lines,
    pattern color=green!50!black,
    no marks
] coordinates {
    (0,{11/28})
    ({11/28},0)
    ({5/6},0)
    ({151/204},{209/476})
    ({407/588},{563/980})
    ({129/196},{633/980})
    ({633/980},{129/196})
    ({563/980},{407/588})
    ({209/476},{151/204})
    (0,{5/6})
} -- cycle;

\addplot[draw=green!55!black, very thick, no marks]
coordinates {
    (0,{5/6})
    (0,{11/28})
    ({11/28},0)
    ({5/6},0)
};

\addplot[draw=green!55!black, dashed, very thick, no marks]
coordinates {
    ({5/6},0)
    ({151/204},{209/476})
    ({407/588},{563/980})
    ({129/196},{633/980})
    ({633/980},{129/196})
    ({563/980},{407/588})
    ({209/476},{151/204})
    (0,{5/6})
};

\addplot[
    draw=blue!75!black,
    fill=blue!32,
    very thick,
    no marks
] coordinates {
    ({11/28},{11/28})
    ({11/24},{11/28})
    ({11/28},{11/24})
} -- cycle;

\addplot[densely dotted, gray!65, domain=0.11:0.89, no marks]
{1-x};

\addplot[
    only marks,
    mark=diamond*,
    mark size=2.6pt,
    color=red!80!black
] coordinates {
    ({17/28},{79/140})
    ({79/140},{17/28})
};

\addplot[
    draw=red!85!black,
    dashed,
    line width=1.35pt,
    no marks,
    forget plot
] coordinates {
    (0,{5/6})
    ({11/28},{5/6})
    ({151/196},{151/196})
    ({5/6},{11/28})
    ({5/6},0)
};

\end{groupplot}
\end{tikzpicture}

\caption{Sections of the closure of $\mathcal C_{d,1}$ from
Theorem~\ref{thm:combined-localized}, which governs
$\mathcal T_{[1,2]}$, at a fixed output exponent in dimensions
$d=5$ and $d=6$, drawn in the $(\frac1p,\frac1q)$-plane.
Solid green boundary segments belong to the region except
at their junctions with the dashed green boundary, which
consists of limiting exponents at which no strong-type
estimate is claimed.
The red diamonds mark the asymmetric points of
Theorem~\ref{thm:mainasymmax} with $\theta=\frac12$, and
the red dashed lines show the upper and right boundaries
imposed by the necessary conditions of
Section~\ref{sec:necessary-conditions}.
The blue triangle in the right panel is the closure of
the region obtained by interpolating the Sobolev estimate
of \cite{BorgesFosterOu2025} with the
$L^\infty\times L^\infty\to L^\infty$ bound, and that
estimate does not apply when $d=5$.}
\label{fig:intro-max-compare-overview}
\end{figure}

Theorem~\ref{thm:combined-localized} also contains known one-sided estimates. The pointwise bound
$$\mathcal T_{[1,2]}(f,g)(x)\le\|g\|_\infty\sup_{1\le t\le2}A_t|f|(x)$$
and the classical spherical maximal theorem \cite{Stein1976} imply
$$\mathcal T_{[1,2]}:L^{\frac{12}{5}}(\mathbb R^5)\times L^\infty(\mathbb R^5)\longrightarrow L^{\frac{12}{5}}(\mathbb R^5),$$
and the corresponding reciprocal triple $(\frac{5}{12},0,\frac{5}{12})$ belongs to $\mathcal C_{5,1}$. It is the vertex $(\frac{5}{12},0)$ on the horizontal axis in the left panel of Figure~\ref{fig:intro-max-compare-overview}, and it shows how far $\mathcal C_{5,1}$ extends beyond the region of Theorem~\ref{thm:mainasymmax}.

\begin{remark}
The Sobolev smoothing and $L^p$-improving estimates obtained here may also be useful for sparse domination and for global maximal triangle operators. We plan to investigate these applications in subsequent work.
\end{remark}

\subsection{Main analytic difficulties}

The proofs have to overcome two analytic difficulties. The first is to obtain improved Sobolev smoothing estimates for triangle averages. Previous approaches, including those of Palsson and Sovine \cite{PalssonSovine2020} and of Borges, Foster and Ou \cite{BorgesFosterOu2025}, rely on decay estimates for the Fourier transform of the triangle surface measure, refined in the latter work by an angular decomposition, and this decay does not provide the smoothing gain needed here. We study the triangle incidence relation instead through its trilinear form and the theory of Fourier integral operators. The trilinear form can be read as a linear operator in three ways, by declaring one vertex of the triangle to be the output variable and the other two the input variables, and the obstruction is that none of the three readings is good everywhere: for each partition of the vertex variables, a cotangent projection of the conormal bundle fails to have maximal rank in certain conormal directions. We overcome this by adapting the microlocal partition method that Greenleaf, Iosevich and Taylor \cite{GreenleafIosevichTaylor2024} developed in their study of Mattila--Sj\"olin type interior results for configuration sets. At every nonzero conormal covector at least one of the three partitions $(1\mid23)$, $(2\mid13)$ and $(3\mid12)$ is nondegenerate in a conic neighborhood, for a reason that is easy to see in Figure~\ref{fig:multipliers}, so the conormal bundle is covered by three open cones $\mathcal W_j$, and on each of them the standard Sobolev estimates for Fourier integral operators apply after microlocalization. This yields a trilinear Sobolev estimate with a total gain of $\frac{d-3}{2}$ derivatives, uniformly over compact families of nondegenerate triangles, together with the weighted variants needed when the side lengths vary. Combined with a Littlewood--Paley decomposition and a covering argument for the set of radii, these estimates improve the maximal Sobolev smoothing bound by half a derivative.

The second difficulty is to obtain improved asymmetric $L^p\times L^q\to L^r$ estimates for the fixed-radius operator, and here our approach differs from the methods previously used for triangle averaging operators. We follow the analytic interpolation method behind the classical spherical $L^p$-improving estimates of Strichartz \cite{Strichartz1970} and Littman \cite{Littman1973}, but the analytic family is built by varying the shape of the triangle. We keep $|y|=|z|=1$ and replace the constraint $y\cdot z=\frac12$, which fixes the third side, by $y\cdot z=c$, so that the third side has length $\sqrt{2-2c}$. The corresponding averaging operator is denoted by $\mathcal U_c$, and $\mathcal U_{c_0}=\mathcal T_1$ for $c_0=\frac12$. We then deform analytically the distribution that imposes the constraint on the third side, setting
$$\mathcal A^\zeta(f,g)=\langle k_\zeta(s),\chi(s)\mathcal U_{c_0+s}(f,g)\rangle_s,$$
where $s$ is real, $\zeta$ is complex, $\chi$ is a smooth cutoff equal to one near the origin, and $k_\zeta(s)=\frac{|s|^{\zeta-1}}{\Gamma(\frac{\zeta}{2})}$ is understood by analytic continuation. Since $k_0=\delta_0$, the triangle average itself appears at $\zeta=0$.

The main issue in applying complex interpolation is the estimate on the left boundary of the strip; see Figure~\ref{fig:strip}. On the right boundary $\operatorname{Re}\zeta=1$ the family is controlled pointwise by a product of two spherical averages, and the sharp spherical $L^p$-improving estimate supplies the required bound. On the left boundary $\operatorname{Re}\zeta=-\frac{d-3}{2}$ we have to balance the Sobolev smoothing against derivatives in the shape parameter. Two parametrizations of the same family of triangles allow us to place these derivatives on whichever input has the lower frequency, while the smoothing gain is taken from the input with the higher frequency. Writing $A=\frac{d-3}{2}$ and $f_i$, $g_j$ for the Littlewood--Paley pieces of $f$ and $g$ at frequencies $2^i$ and $2^j$, the resulting estimate is
$$\|\partial_s^m[\chi(s)\mathcal U_{c_0+s}(f_i,g_j)]\|_2\lesssim_m2^{m\min(i,j)-A\max(i,j)}\|f_i\|_2\|g_j\|_2.$$
A scale-sensitive estimate that uses the homogeneity of the Riesz distributions then produces, on the critical left boundary, the dyadic factor
$$2^{A\min(i,j)-A\max(i,j)}=2^{-A|i-j|},$$
which is summable in the difference of the Littlewood--Paley indices when $d\ge4$. This summation is what preserves the strong-type endpoint, and complex interpolation, followed by the symmetry of the equilateral triangle, then yields the asymmetric estimates. Littlewood--Paley decompositions are also used in the maximal estimates, where they keep track of the smoothing gain, the derivatives in the radius and the loss coming from the covering number of the set of radii.

\subsection*{Organization of the paper}

In Section~\ref{sec:fixed} we establish uniform Sobolev estimates for triangle averages and use frequency localization and a covering argument to prove Theorem~\ref{thm:main_Sobolev_fractal_dilation}. In Section~\ref{sec:fixed-improving} we construct an analytic family of operators containing the triangle averages and prove Theorem~\ref{thm:asymmetric} and Corollary~\ref{cor:symmetric-fixed} by analytic interpolation, and we compare $\mathcal Q_d$ with the region $\widehat{\mathcal B}_d$. Section~\ref{sec:asymmax} develops a transfer principle from fixed-radius bounds to localized maximal estimates and proves Theorems~\ref{thm:mainasymmax} and~\ref{thm:combined-localized}. Finally, in Section~\ref{sec:necessary-conditions} we present necessary conditions for the fixed-radius and localized maximal estimates.

\section{Sobolev estimates for triangle averages}\label{sec:fixed}

We begin with a uniform Sobolev estimate for triangle averaging operators with a smoothing gain of $\frac{d-3}{2}$ derivatives. This estimate was obtained by Greenleaf, Iosevich and Taylor \cite[Section~5]{GreenleafIosevichTaylor2024} through a microlocal partition argument in their proof of Mattila--Sj\"olin type results for triangle configurations, and for completeness we give the details in the uniform form needed here. We then derive frequency-localized estimates and bounds for derivatives in the radii, and combining them with a covering argument we arrive at maximal Sobolev estimates over fractal sets of radii. These results are also used later, in the proofs of the fixed-radius and localized maximal $L^p$-improving estimates.

\subsection{Incidence geometry and microlocal analysis}

We work on the open set of triples $x=(x^1,x^2,x^3)\in(\mathbb R^d)^3$ with pairwise distinct entries, where the map
$$\Psi(x^1,x^2,x^3)=(|x^1-x^2|,|x^1-x^3|,|x^2-x^3|)$$
is smooth. Writing $e_{12}=\frac{x^1-x^2}{|x^1-x^2|}$, $e_{13}=\frac{x^1-x^3}{|x^1-x^3|}$ and $e_{23}=\frac{x^2-x^3}{|x^2-x^3|}$ for the unit vectors along the sides, the differential of $\Psi$ is the block matrix
$$D\Psi=\begin{pmatrix}
e_{12}&-e_{12}&0\\
e_{13}&0&-e_{13}\\
0&e_{23}&-e_{23}
\end{pmatrix},$$
whose rows are the gradients of the three side lengths and whose entries are row vectors in $\mathbb R^d$. Let
$$\Omega=\{(t_1,t_2,t_3)\in(0,\infty)^3:t_i<t_j+t_k\text{ whenever }\{i,j,k\}=\{1,2,3\}\}$$
be the set of side lengths of nondegenerate triangles. For $\vec t\in\Omega$ the strict triangle inequalities force every triple in $\Psi^{-1}(\vec t)$ to be noncollinear, so $e_{12}$ and $e_{13}$ are linearly independent and the three rows of $D\Psi$ are linearly independent as well. Consequently $Z_{\vec t}=\Psi^{-1}(\vec t)$ is a smooth submanifold of codimension three, and the distribution
$$\lambda_{\vec t}=\delta_0(\Psi-\vec t)$$
is a smooth density on $Z_{\vec t}$, that is, a smooth multiple of surface measure.

The triangle averages themselves live on the level sets of $F(y,z)=(|y|,|z|,|y-z|)$. For $\vec t\in\Omega$ let $\mathcal I_{\vec t}=\{(y,z)\in\mathbb R^{2d}:F(y,z)=\vec t\}$, so that $\mathcal I=\mathcal I_{(1,1,1)}$, and let $\sigma_{\vec t}$ be the normalized surface measure on $\mathcal I_{\vec t}$. By the coarea formula,
\begin{equation}\label{eq:probability-coarea}
d\sigma_{\vec t}(y,z)=m(\vec t)^{-1}\delta_0(F(y,z)-\vec t) dy dz,
\end{equation}
where $m(\vec t)=\int_{\mathbb R^{2d}}\delta_0(F(y,z)-\vec t) dy dz$. Indeed, $\mathcal I_{\vec t}$ is a single $O(d)$-orbit, and since $F$ is $O(d)$-invariant, the coarea Jacobian $\sqrt{\det(DF(DF)^{T})}$ is constant on this orbit and cancels when the measure is normalized. The function $m$ is positive and smooth on $\Omega$, and $m$ and $m^{-1}$ have bounded derivatives on compact subsets of $\Omega$.

For $a\in C_c^\infty((\mathbb R^d)^3)$ we define the localized trilinear form
$$\Lambda_{\vec t,a}(f_1,f_2,f_3)=\int_{(\mathbb R^d)^3}a(x)f_1(x^1)f_2(x^2)f_3(x^3) d\lambda_{\vec t}(x),$$
and for a smooth function $a$ on $(\mathbb R^d)^3$ we write
$$\|a\|_{C^N}=\sum_{|\alpha|\le N}\|\partial^\alpha a\|_{L^\infty((\mathbb R^d)^3)},$$
where $\alpha\in\mathbb N_0^{3d}$ ranges over multi-indices. The following estimate is obtained by adapting the microlocal partition argument of Greenleaf, Iosevich and Taylor \cite[Section~5]{GreenleafIosevichTaylor2024}.

\begin{theorem}\label{thm_trilinear_Sobolev_estimate}
Let $d\ge4$, and let $s_1,s_2,s_3\ge0$ satisfy
$$s_1+s_2+s_3\le\frac{d-3}{2}.$$
For every compact set $K\subset\Omega$ and every compact set $B\subset(\mathbb R^d)^3$ there exist an integer $N\ge0$ and a constant $C>0$ such that
\begin{equation}\label{eq:trilinear}
|\Lambda_{\vec t,a}(f_1,f_2,f_3)|\le C\|a\|_{C^N}\prod_{j=1}^3\|f_j\|_{H^{-s_j}}
\end{equation}
for all $\vec t\in K$, all $a\in C_c^\infty((\mathbb R^d)^3)$ supported in $B$ and all $f_1,f_2,f_3\in\mathcal S(\mathbb R^d)$.

Moreover, for every compact set $K\subset\Omega$ and every $R>0$ there exist an integer $N\ge0$ and a constant $C>0$, depending only on $d$, $K$, $R$ and $s_1,s_2,s_3$, such that
\begin{equation}\label{eq:trilinear-global}
|\Lambda_{\vec t,a}(f_1,f_2,f_3)|\le C\|a\|_{C^N}\prod_{j=1}^3\|f_j\|_{H^{-s_j}}
\end{equation}
for all $\vec t\in K$, all $f_1,f_2,f_3\in\mathcal S(\mathbb R^d)$ and all smooth functions $a$ on $(\mathbb R^d)^3$ with $\|a\|_{C^N}<\infty$ and
$$\operatorname{supp}a\subset\{(x^1,x^2,x^3):|x^2-x^1|\le R,|x^3-x^1|\le R\}.$$
\end{theorem}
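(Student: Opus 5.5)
The plan is to view $\Lambda_{\vec t,a}$ as the pairing of $f_1\otimes f_2\otimes f_3$ with the distribution $a\lambda_{\vec t}$. This distribution is a conormal (Lagrangian) distribution associated with $N^*Z_{\vec t}$, which is the conormal bundle of a codimension three submanifold of $\mathbb R^{3d}$. Parametrize it by the oscillatory integral
$$a(x)\lambda_{\vec t}(x)=\int_{\mathbb R^3}e^{2\pi i\tau\cdot(\Psi(x)-\vec t)}a(x)\,d\tau .$$
Its order is such that, for each partition $(j\mid kl)$, the associated operator $T_j:f_k\otimes f_l\mapsto$ (function of $x^j$) is a Fourier integral operator $\mathbb R^{2d}\to\mathbb R^d$. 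This operator has order $3/2-3d/4$ and canonical relation $C_j=N^*Z_{\vec t}'$.

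A covector in $N^*Z_{\vec t}$ has the form $\xi=\tau\cdot D\Psi$, with components
$$\xi^1=\tau_1e_{12}+\tau_2e_{13},\quad \xi^2=-\tau_1e_{12}+\tau_3e_{23},\quad \xi^3=-\tau_2e_{13}-\tau_3e_{23}.$$

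The first step is the geometric one. I would show that the projection $C_j\to T^*\mathbb R^{d}_{x^j}$, the relevant one for $T_j$, has maximal rank at every covector with $\xi^j\ne0$. The parameter count is as follows: $C_j$ has dimension $3d$, and the fibre over a point of $T^*\mathbb R^d_{x^j}$ is determined by the $(d-2)$-sphere of pairs $(x^k,x^l)$ realizing the triangle together with $\tau$. The condition that the image of $\xi^j$ determines $\tau$ follows from the linear independence of $e_{jk}$ and $e_{jl}$, which holds because $\vec t\in\Omega$ makes the triangle noncollinear. In this situation $C_j$ is a local canonical graph composed with a fibration, with fibres of dimension $d$. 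Hörmander's $L^2$ theorem for FIOs whose projection has constant rank (loss of $\frac12\cdot$corank) then gives that a microlocalized piece of $T_j$ maps $H^{-s_k}\otimes H^{-s_l}$, or more precisely $H^{-\sigma}(\mathbb R^{2d})$ with mixed Sobolev norms handled by frequency localization, to $H^{s_j}$ with total gain $\frac{d-3}{2}$. I would check the count: order $\frac{3}{2}-\frac{3d}{4}$ plus the corank loss $\frac d2\cdot\frac12$ gives gain $\frac{3d}{4}-\frac32-\frac d4=\frac{d-3}{2}$.

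Next comes the covering. At every nonzero $\tau$, some $\xi^j\neq0$: if $\xi^1=\xi^2=0$, linear independence forces $\tau=0$. So I choose a conic partition of unity $1=\sum_j\chi_j(\tau/|\tau|)$, modulo a compactly supported smooth part, with $\chi_j$ supported where $|\xi^j|\gtrsim|\tau|$. The part of $\Lambda$ near $\tau=0$ is a smooth compactly supported kernel and is bounded by any negative Sobolev norms. On the support of $\chi_j$ we have $|\xi^j|\approx|\tau|\gtrsim|\xi^k|,|\xi^l|$. This lets me move the smoothing freely among the three inputs, since the frequency of $f_j$ dominates. Concretely, I write the piece as $\langle T_j^{\chi}(f_k,f_l),f_j\rangle$ and use the $L^2$-Sobolev bound with Sobolev weight $\langle\xi^j\rangle^{(d-3)/2}\le\langle\xi^1\rangle^{s_1}\langle\xi^2\rangle^{s_2}\langle\xi^3\rangle^{s_3}$, valid up to constants on this cone after reassigning weights. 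A Littlewood--Paley decomposition of the $f_j$ makes the weight reassignment rigorous, with the off-cone frequency interactions handled by nonstationary phase.

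Uniformity comes from tracking constants. All symbol, phase and cutoff bounds depend on finitely many derivatives of $a$ and on lower bounds for $|e_{jk}\wedge e_{jl}|$ over $K\times B$. This yields the $\|a\|_{C^N}$ dependence and uniformity in $\vec t\in K$.

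For \eqref{eq:trilinear-global} I would use translation invariance. Cover $\mathbb R^d_{x^1}$ by unit cubes $Q$ and take a partition of unity $\phi_Q(x^1)$. Each piece $a\phi_Q$ is supported in a translate of a fixed compact $B_R$, so \eqref{eq:trilinear} applies uniformly, and it involves only $f_j$ localized to $Q+B(0,R)$. Summing over $Q$ with Cauchy--Schwarz and the almost-orthogonality $\sum_Q\|\psi_Qf\|_{H^{-s}}^2\lesssim\|f\|_{H^{-s}}^2$ for bounded-overlap smooth cutoffs closes the argument. Only two factors can be paired by Cauchy--Schwarz, so I would bound the third by $\sup_Q\|\psi_Q f\|_{H^{-s}}\le C\|f\|_{H^{-s}}$.

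The main obstacle is the first step: verifying the precise rank and fibration structure of the projection on each cone, and applying the degenerate FIO $L^2$ estimate with constants uniform in $\vec t$ and depending on $\|a\|_{C^N}$.
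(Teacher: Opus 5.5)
The overall architecture matches the paper's. You use the same oscillatory representation of $a\lambda_{\vec t}$, the three readings $(j\mid kl)$, and a conic partition in $\theta$ that keeps the $j$-th piece away from the line where $\xi^j=0$. Your effective order $\frac32-\frac{3d}{4}+\frac d4=-\frac{d-3}{2}$ is the paper's, and so is the translation/Cauchy--Schwarz globalization for \eqref{eq:trilinear-global}.

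\textbf{Main gap: distributing the Sobolev weights.} The problem is the step that turns the FIO bound into the mixed norms $\prod_i\|f_i\|_{H^{-s_i}}$.

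First, the inequality you display is backwards. On the $j$-th cone what you need, and what is true, is $\prod_i\langle\xi^i\rangle^{s_i}\lesssim\langle\xi^j\rangle^{\frac{d-3}{2}}$. The reverse fails, for example, whenever weight sits on an input whose frequency is much smaller than $|\xi^j|$.

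More seriously, the Littlewood--Paley ``reassignment'' does not close at the endpoint $s_1+s_2+s_3=\frac{d-3}{2}$ if done the natural way. Suppose you decompose all three functions and sum the dyadic bounds $2^{-\frac{d-3}{2}n_j}\prod_i\|P_{n_i}f_i\|_2$ over $n_k,n_l\le n_j+C$. The only decay left is the factor $2^{-s_k(n_j-n_k)-s_l(n_j-n_l)}$. So if $s_k=0$ you are left with $\sum_{n\le n_j+C}\|P_nf_k\|_2$, which $\|f_k\|_2$ does not control. This is exactly the case $(0,\frac{d-3}{2},0)$ that the paper needs in Lemma~\ref{lem:weighted-fixed}. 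One can repair this by decomposing more selectively and proving the off-cone decay carefully.

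The missing idea, which makes all of this unnecessary, is the paper's. A nondegenerate FIO of effective order $-\frac{d-3}{2}$ maps $H^\sigma(\mathbb R^{2d})\to H^{\sigma+\frac{d-3}{2}}(\mathbb R^d)$ for \emph{every} real $\sigma$. For $s_k,s_l\ge0$, the pointwise bound $\langle(\xi,\eta)\rangle^{-2(s_k+s_l)}\le\langle\xi\rangle^{-2s_k}\langle\eta\rangle^{-2s_l}$ gives $\|f_k\otimes f_l\|_{H^{-s_k-s_l}(\mathbb R^{2d})}\le\|f_k\|_{H^{-s_k}}\|f_l\|_{H^{-s_l}}$. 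Take $\sigma=-s_k-s_l$ and pair with $f_j\in H^{-s_j}$, using $\frac{d-3}{2}-s_k-s_l\ge s_j$. This finishes each cone with no frequency dominance, no Littlewood--Paley pieces and no nonstationary phase. You already rely implicitly on the same product-weight inequality for your smooth part near $\tau=0$.

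\textbf{Second gap: the rank computation.} As written, the geometric step is only heuristic. A fibre-dimension count is consistent with surjectivity of $D\pi_j$ but does not prove it. The observation that $\xi^j$ determines the two adjacent multipliers concerns injectivity in $\tau$, not surjectivity of the differential.

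The paper computes $D\pi_j$ in the coordinates $(x^1,\omega,\widetilde\omega,\theta)$. The $\theta$-derivatives sweep out $\operatorname{span}(\omega,\widetilde\omega)$. For $j=1$ and $\theta_1\ne0$, moving $\omega$ in a direction orthogonal to both $\omega$ and $\widetilde\omega$ (a legitimate tangent vector to $S_{\vec t}$) sweeps out the orthogonal complement. The other cases are analogous. The Lagrangian property then forces the right projection to be an immersion. So each microlocalized piece is a nondegenerate FIO, and there is no loss beyond the standard shift $\frac{|n_X-n_Y|}{4}=\frac d4$ that you correctly included in your count.
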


\begin{proof}
We begin with \eqref{eq:trilinear}, so we fix $K$ and $B$. The vertices of every triple in $Z_{\vec t}$ with $\vec t\in K$ are at mutual distance at least $\min\{t_i:\vec t\in K,1\le i\le3\}>0$. Multiplying $a$ by a smooth cutoff that equals one near $\bigcup_{\vec t\in K}Z_{\vec t}$ and vanishes near the diagonals changes neither $\Lambda_{\vec t,a}$ nor the form of the estimate, so we may assume that $\Psi$ is smooth on a neighborhood of the support of $a$. There the delta distribution has the Fourier representation
$$\lambda_{\vec t}(x)=\int_{\mathbb R^3}e^{2\pi i\theta\cdot(\Psi(x)-\vec t)} d\theta$$
in the sense of distributions, and therefore
$$\Lambda_{\vec t,a}(f_1,f_2,f_3)=\int_{(\mathbb R^d)^3}K_{\vec t,a}(x)f_1(x^1)f_2(x^2)f_3(x^3) dx,$$
where the kernel $K_{\vec t,a}=a\lambda_{\vec t}$ is the oscillatory integral
$$K_{\vec t,a}(x)=\int_{\mathbb R^3}e^{2\pi i\theta\cdot(\Psi(x)-\vec t)}a(x) d\theta.$$
The phase function $\phi(x,\theta)=\theta\cdot(\Psi(x)-\vec t)$, with $x\in\mathbb R^{3d}$ and $\theta\in\mathbb R^3$, is nondegenerate because $D\Psi$ has rank three on $Z_{\vec t}$, and it parametrizes the conormal bundle
$$N^*Z_{\vec t}=\{(x,D\Psi(x)^{T}\theta):x\in Z_{\vec t},\theta\in\mathbb R^3\setminus\{0\}\}.$$
The amplitude has order zero and there are three phase variables in an ambient space of dimension $3d$, so in H\"ormander's notation \cite{Hormander1971,Hormander1985} we have $\lambda_{\vec t}\in I^{\frac32-\frac{3d}{4}}(\mathbb R^{3d};N^*Z_{\vec t})$.

To estimate $\Lambda_{\vec t,a}$ we regard $K_{\vec t,a}$ as the kernel of an operator that acts on two of the vertex variables and produces a function of the third. For $j\in\{1,2,3\}$ let $\{i_1,i_2\}=\{1,2,3\}\setminus\{j\}$ with $i_1<i_2$, and denote this partition of the vertex variables by $\sigma_j=(j\mid i_1i_2)$, so that the three partitions are $\sigma_1=(1\mid23)$, $\sigma_2=(2\mid13)$ and $\sigma_3=(3\mid12)$. The operator associated with $\sigma_j$ is
$$\widetilde{\mathcal T}^{\sigma_j}_{\vec t,a}h(x^j)=\iint K_{\vec t,a}(x^1,x^2,x^3)h(x^{i_1},x^{i_2}) dx^{i_1}dx^{i_2},$$
and for every $j$ we have
$$\Lambda_{\vec t,a}(f_1,f_2,f_3)=\int_{\mathbb R^d}\widetilde{\mathcal T}^{\sigma_j}_{\vec t,a}(f_{i_1}\otimes f_{i_2})(x^j)f_j(x^j) dx^j.$$
Write $\xi=(\xi^1,\xi^2,\xi^3)=D\Psi(x)^{T}\theta$ for the covectors in $N^*Z_{\vec t}$. Microlocally on the region where $\xi^j\ne0$, the operator $\widetilde{\mathcal T}^{\sigma_j}_{\vec t,a}$ is a Fourier integral operator of order $\frac32-\frac{3d}{4}$ from $\mathbb R^{2d}$ to $\mathbb R^d$, whose canonical relation is the twisted conormal bundle
$$C^{\sigma_j}_{\vec t}=\{(x^j,\xi^j;x^{i_1},x^{i_2},-\xi^{i_1},-\xi^{i_2}):(x,\xi)\in N^*Z_{\vec t},\xi^j\ne0\}.$$

We recall the Sobolev estimate that we intend to apply. A canonical relation $C\subset(T^*\mathbb R^d\setminus0)\times(T^*\mathbb R^{2d}\setminus0)$ is called nondegenerate if the natural projections $\pi_L:C\to T^*\mathbb R^d$ and $\pi_R:C\to T^*\mathbb R^{2d}$ have differentials of maximal rank. Since the output and input spaces have dimensions $d$ and $2d$, this means that $\pi_L$ is a submersion and $\pi_R$ is an immersion. For a Fourier integral operator $\mathcal R\in I^{m}(\mathbb R^d,\mathbb R^{2d};C)$ with compactly supported kernel we use, following \cite[(2.7)]{GreenleafIosevichTaylor2024}, the effective order
$$m_{\mathrm{eff}}=m+\frac{2d-d}{4}=m+\frac d4.$$
If $C$ is nondegenerate, the standard estimates for Fourier integral operators \cite{Hormander1971,Hormander1985} show that
\begin{equation}\label{eq:FIO_standard_estimate}
\mathcal R:H^s(\mathbb R^{2d})\longrightarrow H^{s-m_{\mathrm{eff}}}(\mathbb R^d)
\end{equation}
for every $s\in\mathbb R$. In our situation $m_{\mathrm{eff}}=\frac32-\frac{3d}{4}+\frac d4=-\frac{d-3}{2}$, which is the gain we are after. The difficulty is that none of the three canonical relations $C^{\sigma_j}_{\vec t}$ is nondegenerate on the whole conormal bundle, and our next task is to locate the bad directions.

For $\vec t\in\Omega$ let $c_{\vec t}=\frac{t_1^2+t_2^2-t_3^2}{2t_1t_2}$, the cosine of the angle of the triangle at the vertex $x^1$, and let
$$S_{\vec t}=\{(\omega,\widetilde\omega)\in S^{d-1}\times S^{d-1}:\omega\cdot\widetilde\omega=c_{\vec t}\}.$$
Since $\vec t\in\Omega$, we have $|c_{\vec t}|<1$, and
$$Z_{\vec t}=\{(x^1,x^1-t_1\omega,x^1-t_2\widetilde\omega):x^1\in\mathbb R^d,(\omega,\widetilde\omega)\in S_{\vec t}\}.$$
In these coordinates $x^1-x^2=t_1\omega$, $x^1-x^3=t_2\widetilde\omega$, and $x^2-x^3=t_2\widetilde\omega-t_1\omega$ is a vector of length $t_3$. Applying $D\Psi^{T}$ to $\theta=(\theta_1,\theta_2,\theta_3)\in\mathbb R^3\setminus\{0\}$, we see that the points of $N^*Z_{\vec t}$ are parametrized by $x^1\in\mathbb R^d$, $(\omega,\widetilde\omega)\in S_{\vec t}$ and $\theta\in\mathbb R^3\setminus\{0\}$, with
$$\begin{aligned}
(x^1,\xi^1)&=(x^1,\theta_1\omega+\theta_2\widetilde\omega),\\
(x^2,\xi^2)&=\left(x^1-t_1\omega,-\left(\theta_1+\frac{t_1}{t_3}\theta_3\right)\omega+\frac{t_2}{t_3}\theta_3\widetilde\omega\right),\\
(x^3,\xi^3)&=\left(x^1-t_2\widetilde\omega,\frac{t_1}{t_3}\theta_3\omega-\left(\theta_2+\frac{t_2}{t_3}\theta_3\right)\widetilde\omega\right).
\end{aligned}$$
The multiplier $\theta_k$ is attached to the $k$-th side of the triangle, and the covector at each vertex collects the contributions of the two sides through that vertex, as in Figure~\ref{fig:multipliers}.

\begin{figure}[htbp]
\centering
\begin{tikzpicture}[scale=1]
\coordinate (A) at (0,2.6);
\coordinate (B) at (-1.6,0);
\coordinate (C) at (1.6,0);
\fill[blue!6] (A) -- (B) -- (C) -- cycle;
\draw[thick,blue!60!black] (A) -- (B);
\draw[thick,blue!60!black] (A) -- (C);
\draw[line width=1.6pt,red!75!black] (B) -- (C);
\fill (A) circle (1.6pt) node[above=2pt] {$x^1$};
\fill (B) circle (1.6pt) node[below left] {$x^2$};
\fill (C) circle (1.6pt) node[below right] {$x^3$};
\node[anchor=east] at ($(A)!0.5!(B)+(-0.15,0.05)$) {$t_1$, $\theta_1$};
\node[anchor=west] at ($(A)!0.5!(C)+(0.15,0.05)$) {$t_2$, $\theta_2$};
\node[anchor=north,red!75!black] at ($(B)!0.5!(C)+(0,-0.12)$) {$t_3$, $\theta_3$};
\begin{scope}[xshift=6.6cm,yshift=1.25cm]
\shade[ball color=gray!12,opacity=0.6] (0,0) circle (1.35);
\draw[thick] (0,0) circle (1.35);
\draw[gray] (-1.35,0) arc (180:360:1.35 and 0.42);
\draw[gray,dashed] (1.35,0) arc (0:180:1.35 and 0.42);
\draw[->,gray!70!black] (-1.9,0) -- (1.9,0) node[right,black] {$\theta_1$};
\draw[->,gray!70!black] (0,-1.9) -- (0,1.9) node[above,black] {$\theta_2$};
\draw[->,gray!70!black] (0.95,0.7) -- (-1.25,-0.92) node[below left,black] {$\theta_3$};
\fill[red!75!black] (1.35,0) circle (2pt) node[above right] {$L_3$};
\fill[red!75!black] (-1.35,0) circle (2pt) node[above left] {$L_3$};
\fill[red!75!black] (0,1.35) circle (2pt) node[above right] {$L_2$};
\fill[red!75!black] (0,-1.35) circle (2pt) node[below right] {$L_2$};
\fill[red!75!black] (-0.62,-0.46) circle (2pt);
\fill[red!75!black] (0.62,0.46) circle (2pt);
\node[red!75!black,anchor=north west] at (-0.58,-0.5) {$L_1$};
\node[red!75!black,anchor=south east] at (0.58,0.5) {$L_1$};
\end{scope}
\end{tikzpicture}
\caption{Left: the covector $D\Psi(x)^{T}\theta$ attaches the multiplier $\theta_k$ to the $k$-th side of the triangle, and its component $\xi^j$ at the vertex $x^j$ involves only the two sides through $x^j$. Hence $\xi^j=0$ if and only if $\theta$ is concentrated on the side opposite $x^j$. For $j=1$ this is the side $x^2x^3$ (red), which carries $\theta_3$, and the bad line is $L_1=\{\theta_1=\theta_2=0\}$. Right: on the unit sphere of $\theta$-space the partition $\sigma_j$ fails only at the two points of $L_j$, so every direction is good for at least two of the three partitions.}
\label{fig:multipliers}
\end{figure}

For $j=1,2,3$ let $\pi_j$ denote the projection of $N^*Z_{\vec t}$ onto the variables $(x^j,\xi^j)$, which is the left projection of $C^{\sigma_j}_{\vec t}$, and let us determine the rank of $D\pi_j$. The tangent vectors $(u,v)$ to $S_{\vec t}$ at $(\omega,\widetilde\omega)$ are characterized by the three conditions $u\cdot\omega=0$, $v\cdot\widetilde\omega=0$ and $u\cdot\widetilde\omega+\omega\cdot v=0$. Let $i_\omega:T_\omega S^{d-1}\hookrightarrow\mathbb R^d$ and $\widetilde i_{\widetilde\omega}:T_{\widetilde\omega}S^{d-1}\hookrightarrow\mathbb R^d$ be the natural inclusions. In the variables $(x^1,\omega,\widetilde\omega,\theta_1,\theta_2,\theta_3)$, the Jacobians, acting on the constrained tangent space just described, are
$$D\pi_1=\begin{pmatrix}
I_d&0&0&0&0&0\\
0&\theta_1i_\omega&\theta_2\widetilde i_{\widetilde\omega}&\omega&\widetilde\omega&0
\end{pmatrix},$$
$$D\pi_2=\begin{pmatrix}
I_d&-t_1i_\omega&0&0&0&0\\
0&-\left(\theta_1+\frac{t_1}{t_3}\theta_3\right)i_\omega&\frac{t_2}{t_3}\theta_3\widetilde i_{\widetilde\omega}&-\omega&0&\frac{t_2\widetilde\omega-t_1\omega}{t_3}
\end{pmatrix}$$
and
$$D\pi_3=\begin{pmatrix}
I_d&0&-t_2\widetilde i_{\widetilde\omega}&0&0&0\\
0&\frac{t_1}{t_3}\theta_3i_\omega&-\left(\theta_2+\frac{t_2}{t_3}\theta_3\right)\widetilde i_{\widetilde\omega}&0&-\widetilde\omega&\frac{t_1\omega-t_2\widetilde\omega}{t_3}
\end{pmatrix}.$$
Consider $D\pi_1$ first. Because the $x^1$-column of the second row vanishes, $D\pi_1$ is surjective if and only if its second row maps onto $\mathbb R^d$. The last three columns of that row contribute the plane spanned by $\omega$ and $\widetilde\omega$. If $\theta_1\ne0$, we may take $v=0$ and $u$ orthogonal to both $\omega$ and $\widetilde\omega$, which is a tangent vector to $S_{\vec t}$, and conclude that the second row also reaches every vector orthogonal to $\omega$ and $\widetilde\omega$; if $\theta_2\ne0$, the same argument works with the roles of $u$ and $v$ exchanged. Since $\omega$ and $\widetilde\omega$ are linearly independent, $D\pi_1$ is surjective unless $\theta_1=\theta_2=0$, and in that case the second row only reaches a plane, which is a proper subspace of $\mathbb R^d$. In $D\pi_2$ and $D\pi_3$ the constant columns again span the plane of $\omega$ and $\widetilde\omega$, and the same reasoning applies to the coefficients of $i_\omega$ and $\widetilde i_{\widetilde\omega}$. We conclude that $D\pi_j$ is surjective except on the line $L_j$, where
$$\begin{aligned}
L_1&=\{\theta_1=\theta_2=0\},\\
L_2&=\{\theta_1=\theta_3=0\},\\
L_3&=\{\theta_2=\theta_3=0\},
\end{aligned}$$
and that the output covector $\xi^j$ vanishes if and only if $\theta\in L_j$. Thus no single partition covers the entire conormal bundle.

On the other hand, the three lines meet only at the origin, so every $\theta\ne0$ lies outside at least two of them, and the open conic sets
$$\mathcal W_j=\{(x,D\Psi(x)^{T}\theta)\in N^*Z_{\vec t}:\theta\notin L_j\},$$
$j=1,2,3$, cover $N^*Z_{\vec t}$. On $\mathcal W_j$ the projection $\pi_j$ is a submersion, and the Lagrangian property of the canonical relation then forces the right projection to be an immersion there, because the kernel of the differential of the right projection is identified with the symplectic orthogonal complement of the range of the differential of the left projection. Hence $C^{\sigma_j}_{\vec t}$ is nondegenerate on $\mathcal W_j$. We also note that the input covector $(\xi^{i_1},\xi^{i_2})$ never vanishes when $\theta\ne0$, since this would place $\theta$ on two of the lines at once.

We now microlocalize in $\theta$. Choose $\gamma_1,\gamma_2,\gamma_3\in C^\infty(S^2)$ with $\gamma_1+\gamma_2+\gamma_3=1$ on the unit sphere $S^2\subset\mathbb R^3$ and with $\gamma_j$ vanishing in a neighborhood of the two points of $L_j\cap S^2$, which is possible because the sets $S^2\setminus L_j$ form an open cover of $S^2$. Choose also $\rho\in C_c^\infty(\mathbb R^3)$ equal to one near the origin. Splitting the $\theta$-integral accordingly, we write
$$K_{\vec t,a}=K^{\mathrm{sm}}_{\vec t,a}+\sum_{j=1}^3K^{j}_{\vec t,a},$$
where
$$K^{\mathrm{sm}}_{\vec t,a}(x)=a(x)\int_{\mathbb R^3}e^{2\pi i\theta\cdot(\Psi(x)-\vec t)}\rho(\theta) d\theta$$
and
$$K^{j}_{\vec t,a}(x)=\int_{\mathbb R^3}e^{2\pi i\theta\cdot(\Psi(x)-\vec t)}a(x)(1-\rho(\theta))\gamma_j\left(\frac{\theta}{|\theta|}\right) d\theta.$$
The integral in $K^{\mathrm{sm}}_{\vec t,a}$ is a Schwartz function evaluated at $\Psi(x)-\vec t$, so this kernel is smooth and compactly supported, and its derivatives are bounded by multiples of $\|a\|_{C^N}$ uniformly in $\vec t\in K$. The amplitude of $K^j_{\vec t,a}$ is a symbol of order zero whose conic support in $\theta$ stays a positive angular distance away from $L_j$, so $K^j_{\vec t,a}$ is the kernel of the Fourier integral operator
$$\mathcal R^{\sigma_j}_{\vec t,a}h(x^j)=\iint K^j_{\vec t,a}(x^1,x^2,x^3)h(x^{i_1},x^{i_2}) dx^{i_1}dx^{i_2},$$
whose canonical relation is the part of $C^{\sigma_j}_{\vec t}$ lying over a closed conic subset of $\mathcal W_j$. Writing $\langle F,G\rangle=\int FG$ for the bilinear pairing, we arrive at
\begin{equation}\label{eq:Lambda_decomposition}
\Lambda_{\vec t,a}(f_1,f_2,f_3)=\sum_{j=1}^3\langle\mathcal R^{\sigma_j}_{\vec t,a}(f_{i_1}\otimes f_{i_2}),f_j\rangle+\Lambda^{\mathrm{sm}}_{\vec t,a}(f_1,f_2,f_3),
\end{equation}
where $\Lambda^{\mathrm{sm}}_{\vec t,a}$ is the trilinear form with kernel $K^{\mathrm{sm}}_{\vec t,a}$.

By the computation above, each $\mathcal R^{\sigma_j}_{\vec t,a}$ has a nondegenerate canonical relation. Since $K$ is compact and the spatial support is compact, the estimate \eqref{eq:FIO_standard_estimate} holds uniformly in $\vec t\in K$, with constants controlled by finitely many seminorms of the amplitude \cite[Remark~2.2]{GreenleafIosevichTaylor2024}, and with $m_{\mathrm{eff}}=-\frac{d-3}{2}$ it takes the form
$$\|\mathcal R^{\sigma_j}_{\vec t,a}h\|_{H^{s+\frac{d-3}{2}}(\mathbb R^d)}\lesssim\|a\|_{C^N}\|h\|_{H^s(\mathbb R^{2d})}.$$
For nonnegative $s_{i_1}$ and $s_{i_2}$ the inequality $\langle(\xi,\eta)\rangle^{-2(s_{i_1}+s_{i_2})}\le\langle\xi\rangle^{-2s_{i_1}}\langle\eta\rangle^{-2s_{i_2}}$ and Plancherel's theorem show that
$$\|f_{i_1}\otimes f_{i_2}\|_{H^{-s_{i_1}-s_{i_2}}(\mathbb R^{2d})}\le\|f_{i_1}\|_{H^{-s_{i_1}}}\|f_{i_2}\|_{H^{-s_{i_2}}},$$
so the choice $h=f_{i_1}\otimes f_{i_2}$ and $s=-s_{i_1}-s_{i_2}$ yields
$$\|\mathcal R^{\sigma_j}_{\vec t,a}(f_{i_1}\otimes f_{i_2})\|_{H^{\frac{d-3}{2}-s_{i_1}-s_{i_2}}}\lesssim\|a\|_{C^N}\|f_{i_1}\|_{H^{-s_{i_1}}}\|f_{i_2}\|_{H^{-s_{i_2}}}.$$
The hypothesis $s_1+s_2+s_3\le\frac{d-3}{2}$ says that $\frac{d-3}{2}-s_{i_1}-s_{i_2}\ge s_j$, and pairing with $f_j\in H^{-s_j}$ we find
$$|\langle\mathcal R^{\sigma_j}_{\vec t,a}(f_{i_1}\otimes f_{i_2}),f_j\rangle|\lesssim\|a\|_{C^N}\prod_{i=1}^3\|f_i\|_{H^{-s_i}}.$$
The smooth term obeys the same bound, because its kernel belongs to $H^{s_1+s_2+s_3}(\mathbb R^{3d})$ with norm controlled by $\|a\|_{C^N}$, while the argument above shows that $\|f_1\otimes f_2\otimes f_3\|_{H^{-s_1-s_2-s_3}(\mathbb R^{3d})}\le\prod_i\|f_i\|_{H^{-s_i}}$. Summing the four contributions in \eqref{eq:Lambda_decomposition} proves \eqref{eq:trilinear}, uniformly for $\vec t\in K$.

We turn to \eqref{eq:trilinear-global}. Fix a compact set $K\subset\Omega$ and $R>0$, and let $a$ be as in the second part of the statement. Set $C_d=1+\sqrt d$ and choose $\chi\in C_c^\infty(B(0,C_d))$ with $\sum_{n\in\mathbb Z^d}\chi(x-n)=1$, which can be done by normalizing the translates of a nonnegative bump function that is positive on $[-\frac12,\frac12]^d$. Write $\chi_n(x)=\chi(x-n)$, choose $\eta\in C_c^\infty(B(0,C_d+1))$ equal to one on the support of $\chi$, and set $\eta_n(x)=\eta(x-n)$. Then
$$a(x^1,x^2,x^3)=\sum_{n,m,p\in\mathbb Z^d}a(x^1,x^2,x^3)\chi_n(x^1)\chi_m(x^2)\chi_p(x^3),$$
and because of the support condition on $a$ only the triples of indices of the form $(n,n+k,n+\ell)$ with $(k,\ell)$ in the finite set
$$F_R=\{(k,\ell)\in\mathbb Z^{2d}:|k|\le R+2C_d,|\ell|\le R+2C_d\}$$
contribute to the sum. Writing
$$a_{n,k,\ell}(x)=a(x)\chi_n(x^1)\chi_{n+k}(x^2)\chi_{n+\ell}(x^3),$$
we therefore have $a=\sum_{n\in\mathbb Z^d}\sum_{(k,\ell)\in F_R}a_{n,k,\ell}$, where each $a_{n,k,\ell}$ is supported in the product of closed balls $\overline{B(n,C_d)}\times\overline{B(n+k,C_d)}\times\overline{B(n+\ell,C_d)}$ and satisfies $\|a_{n,k,\ell}\|_{C^N}\lesssim_N\|a\|_{C^N}$ uniformly in $n$, $k$ and $\ell$. Since $\eta_n=1$ on the support of $\chi_n$,
$$\Lambda_{\vec t,a}(f_1,f_2,f_3)=\sum_{(k,\ell)\in F_R}\sum_{n\in\mathbb Z^d}\Lambda_{\vec t,a_{n,k,\ell}}(\eta_nf_1,\eta_{n+k}f_2,\eta_{n+\ell}f_3).$$
The incidence relation is invariant under simultaneous translation of the three vertices and Sobolev norms are translation invariant, so after a translation by $-n$ each term is controlled by \eqref{eq:trilinear}, applied with amplitudes supported in one of finitely many fixed compact sets. With $A_n=\|\eta_nf_1\|_{H^{-s_1}}$, $B_n=\|\eta_nf_2\|_{H^{-s_2}}$ and $C_n=\|\eta_nf_3\|_{H^{-s_3}}$ we arrive at
$$|\Lambda_{\vec t,a}(f_1,f_2,f_3)|\lesssim\|a\|_{C^N}\sum_{(k,\ell)\in F_R}\sum_{n\in\mathbb Z^d}A_nB_{n+k}C_{n+\ell}.$$
By the Cauchy--Schwarz inequality and the inclusion $\ell^2\subset\ell^\infty$,
$$\sum_{n\in\mathbb Z^d}A_nB_{n+k}C_{n+\ell}\le\|A\|_{\ell^2}\|B\|_{\ell^2}\|C\|_{\ell^2},$$
and the standard localization estimate $\sum_{n\in\mathbb Z^d}\|\eta_nf\|_{H^s}^2\lesssim_s\|f\|_{H^s}^2$, valid for every $s\in\mathbb R$, bounds the right-hand side by a multiple of $\prod_j\|f_j\|_{H^{-s_j}}$. Summing over the finite set $F_R$ proves \eqref{eq:trilinear-global} uniformly for $\vec t\in K$, which completes the proof of the theorem.
\end{proof}

\subsection{Frequency-localized estimates and radius derivatives}

We use a Littlewood--Paley decomposition. Choose $\varphi\in\mathcal S(\mathbb R^d)$ with $\widehat\varphi=1$ on $B(0,1)$ and $\operatorname{supp}\widehat\varphi\subset B(0,2)$, put $\widehat{\psi_0}=\widehat\varphi$ and
$$\widehat{\psi_i}(\xi)=\widehat\varphi(2^{-i}\xi)-\widehat\varphi(2^{-i+1}\xi)$$
for $i\ge1$, and write $P_if=f_i=\psi_i*f$ for the $i$-th Littlewood--Paley piece of $f$. The sum $\sum_{i=0}^n\widehat{\psi_i}(\xi)=\widehat\varphi(2^{-n}\xi)$ telescopes, so $f=\sum_{i\ge0}f_i$ in $\mathcal S$ for every $f\in\mathcal S$, and the Fourier supports satisfy $\operatorname{supp}\widehat{f_0}\subset B(0,2)$ and
$$\operatorname{supp}\widehat{f_i}\subset\{2^{i-1}\le|\xi|\le2^{i+1}\}$$
for $i\ge1$. By Plancherel's theorem and the bounded overlap of these supports,
\begin{equation}\label{eq:LP}
\|f\|_{H^{-s}}^2\approx\sum_{i\ge0}2^{-2si}\|f_i\|_2^2
\end{equation}
for every $s\in\mathbb R$. The support condition also implies the Bernstein inequality
\begin{equation}\label{eq:Bernstein}
\|\partial^\alpha f_i\|_2\lesssim_\alpha2^{i|\alpha|}\|f_i\|_2
\end{equation}
for all $i\ge0$ and $\alpha\in\mathbb N_0^d$, as well as the bound $\|F\|_{H^{-s}}\lesssim2^{-si}\|F\|_2$ for $s\ge0$ and every $F\in L^2$ whose Fourier transform is supported where $\widehat{f_i}$ is, because $\langle\xi\rangle\ge2^{i-1}$ there when $i\ge1$.

For a smooth function $a$ on $\mathcal I$ and $t_1,t_2>0$ we define the weighted triangle average
$$\mathcal T^a_{t_1,t_2}(f,g)(x)=\int_{\mathcal I}a(y,z)f(x-t_1y)g(x-t_2z) d\sigma(y,z),$$
and when $a\equiv1$ we simply write $\mathcal T_{t_1,t_2}$, so that $\mathcal T_{t,t}=\mathcal T_t$. Here and below $\|a\|_{C^N(\mathcal I)}$ is the $C^N$ norm of $a$ on the compact manifold $\mathcal I$, computed in a fixed finite atlas. The uniform trilinear Sobolev estimate of Theorem~\ref{thm_trilinear_Sobolev_estimate} implies the following frequency-localized bound.

\begin{lemma}\label{lem:weighted-fixed}
Let $d\ge4$, and let $K\subset(0,\infty)^2$ be compact. There exist an integer $N\ge0$ and a constant $C>0$, depending on $d$ and $K$, such that
\begin{equation}\label{eq:weighted-fixed}
\sup_{(t_1,t_2)\in K}\|\mathcal T^a_{t_1,t_2}(f_i,g_j)\|_2\le C\|a\|_{C^N(\mathcal I)}2^{-\frac{d-3}{2}\max(i,j)}\|f_i\|_2\|g_j\|_2
\end{equation}
for every $a\in C^\infty(\mathcal I)$, all $f,g\in L^2(\mathbb R^d)$ and all $i,j\ge0$.
\end{lemma}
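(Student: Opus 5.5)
The plan is to dualize and reduce \eqref{eq:weighted-fixed} to the global trilinear estimate \eqref{eq:trilinear-global} of Theorem~\ref{thm_trilinear_Sobolev_estimate}, placing all $\frac{d-3}{2}$ derivatives on whichever input has the larger Littlewood--Paley index. By density we may assume $f,g\in\mathcal S(\mathbb R^d)$; then $f_i,g_j\in\mathcal S$ as well. For $h\in\mathcal S(\mathbb R^d)$ we write
$$\int_{\mathbb R^d}\mathcal T^a_{t_1,t_2}(f_i,g_j)(x)h(x)\,dx=\int_{\mathbb R^d}\int_{\mathcal I}a(y,z)h(x)f_i(x-t_1y)g_j(x-t_2z)\,d\sigma(y,z)\,dx,$$
and identify the right-hand side with a form $\Lambda_{\vec t,\tilde a}(h,f_i,g_j)$ in the vertex variables $x^1=x$, $x^2=x-t_1y$, $x^3=x-t_2z$.

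The first step is the geometric identification. For $(y,z)\in\mathcal I$ we have $|x^1-x^2|=t_1$, $|x^1-x^3|=t_2$ and $|x^2-x^3|^2=t_1^2+t_2^2-t_1t_2$, since $y\cdot z=\frac12$. So the map $\Phi(x,y,z)=(x,x-t_1y,x-t_2z)$ carries $\mathbb R^d\times\mathcal I$ diffeomorphically onto $Z_{\vec t}$ with
$$\vec t=\vec t(t_1,t_2)=\Bigl(t_1,t_2,\sqrt{t_1^2+t_2^2-t_1t_2}\Bigr).$$
This triangle has an angle of $60^\circ$ at $x^1$, so it is nondegenerate, and the continuous map $(t_1,t_2)\mapsto\vec t$ sends $K$ onto a compact set $K'\subset\Omega$.

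Both $\Phi_*(dx\,d\sigma)$ and $\lambda_{\vec t}$ are invariant under translations and under the diagonal action of $O(d)$. This action is transitive on $Z_{\vec t}$, so the two measures differ by a constant $c(t_1,t_2)>0$. By the coarea description \eqref{eq:probability-coarea} this constant is smooth in $(t_1,t_2)$, hence bounded above and below on $K$.

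We then fix a smooth extension $a^{\mathrm{ext}}$ of $a$ to a neighborhood of $\mathcal I$ in $\mathbb R^{2d}$, for instance via a tubular neighborhood retraction multiplied by a cutoff, so that $\|a^{\mathrm{ext}}\|_{C^N}\lesssim\|a\|_{C^N(\mathcal I)}$. We set
$$\tilde a(x^1,x^2,x^3)=c(t_1,t_2)\,a^{\mathrm{ext}}\Bigl(\frac{x^1-x^2}{t_1},\frac{x^1-x^3}{t_2}\Bigr).$$
This amplitude is supported in $\{|x^2-x^1|\le R,\ |x^3-x^1|\le R\}$ for some $R$ depending only on $K$, and $\|\tilde a\|_{C^N}\lesssim_K\|a\|_{C^N(\mathcal I)}$ because $t_1,t_2$ stay in a compact subset of $(0,\infty)$. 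With this choice the pairing above equals $\Lambda_{\vec t,\tilde a}(h,f_i,g_j)$.

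Next we apply \eqref{eq:trilinear-global} with the compact set $K'$ and this $R$. In the case $i\ge j$ we take $s_1=0$, $s_2=\frac{d-3}{2}$ and $s_3=0$; in the case $j>i$ we take $s_3=\frac{d-3}{2}$ instead. In the first case this gives
$$|\Lambda_{\vec t,\tilde a}(h,f_i,g_j)|\lesssim\|a\|_{C^N(\mathcal I)}\|h\|_2\|f_i\|_{H^{-\frac{d-3}{2}}}\|g_j\|_2.$$
The Fourier support of $f_i$ gives $\|f_i\|_{H^{-\frac{d-3}{2}}}\lesssim2^{-\frac{d-3}{2}i}\|f_i\|_2$; this is the bound recorded after \eqref{eq:Bernstein} for $i\ge1$, and it is trivial for $i=0$. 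The case $j>i$ is symmetric. Taking the supremum over $\|h\|_2\le1$ yields \eqref{eq:weighted-fixed}, with constants uniform over $(t_1,t_2)\in K$.

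The main point needing care is the construction of the ambient amplitude $\tilde a$. One must check that the pushforward density is exactly a constant multiple of $\lambda_{\vec t}$, and that the extension of $a$ off $\mathcal I$ has $C^N$ norm controlled by $\|a\|_{C^N(\mathcal I)}$ uniformly in $(t_1,t_2)\in K$. The values of $\tilde a$ off $Z_{\vec t}$ do not affect $\Lambda_{\vec t,\tilde a}$, so any such extension will do. The invariance argument handles the density, and a fixed tubular extension operator handles the amplitude. After that, the estimate is a direct application of Theorem~\ref{thm_trilinear_Sobolev_estimate}.
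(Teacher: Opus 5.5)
Your proposal is correct and follows essentially the same route as the paper. You dualize, identify the pairing with a trilinear form $\Lambda_{\vec t,\cdot}(h,f_i,g_j)$ using the rescaled extension $a^{\mathrm{ext}}\bigl(\frac{x^1-x^2}{t_1},\frac{x^1-x^3}{t_2}\bigr)$, and apply \eqref{eq:trilinear-global} with the full $\frac{d-3}{2}$ derivatives on the higher-frequency input. The only cosmetic difference is how the density constant is justified: you use uniqueness of rigid-motion-invariant measures on $Z_{\vec t}$, and your $c(t_1,t_2)$ is exactly $m(\vec t)^{-1}$, whereas the paper reads this factor off directly from the coarea identity \eqref{eq:probability-coarea} and the push-forward of $\sigma$ to $\sigma_{\vec t}$.
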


\begin{proof}
Fix $(t_1,t_2)\in K$. Since $y\cdot z=\frac12$ on $\mathcal I$, the points $0$, $t_1y$ and $t_2z$ form a triangle with side lengths
$$\vec t(t_1,t_2)=\left(t_1,t_2,\sqrt{t_1^2+t_2^2-t_1t_2}\right),$$
and these side lengths lie in a compact subset of $\Omega$ as $(t_1,t_2)$ ranges over $K$. The map $(y,z)\mapsto(t_1y,t_2z)$ carries $\mathcal I$ onto $\mathcal I_{\vec t(t_1,t_2)}$ and commutes with rotations, so it pushes $\sigma$ forward to the invariant probability measure $\sigma_{\vec t(t_1,t_2)}$. Let $\widetilde a$ be a smooth compactly supported extension of $a$ to $\mathbb R^{2d}$ with $\|\widetilde a\|_{C^N(\mathbb R^{2d})}\lesssim_N\|a\|_{C^N(\mathcal I)}$ for every $N$, obtained for instance by composing $a$ with the nearest point projection onto $\mathcal I$ and multiplying by a cutoff, and set
$$b_{t_1,t_2}(x^1,x^2,x^3)=\widetilde a\left(\frac{x^1-x^2}{t_1},\frac{x^1-x^3}{t_2}\right).$$
The parameters $t_1$ and $t_2$ are bounded above and away from zero on $K$, so the functions $b_{t_1,t_2}$ satisfy the support condition of \eqref{eq:trilinear-global} for a fixed $R$, and their $C^N$ norms are bounded by a constant multiple of $\|a\|_{C^N(\mathcal I)}$. For $f,g,h\in\mathcal S(\mathbb R^d)$, the change of variables $(x^1,x^2,x^3)=(x,x-u,x-v)$ in the definition of $\Lambda$, together with \eqref{eq:probability-coarea} and the push-forward identity, shows that
$$\int_{\mathbb R^d}\mathcal T^a_{t_1,t_2}(f,g)(x)\overline{h(x)} dx=m(\vec t)^{-1}\Lambda_{\vec t,b_{t_1,t_2}}(\overline h,f,g),$$
where $\vec t=\vec t(t_1,t_2)$. Applying \eqref{eq:trilinear-global} with $(s_1,s_2,s_3)=(0,\frac{d-3}{2},0)$ and with $(s_1,s_2,s_3)=(0,0,\frac{d-3}{2})$, and using $L^2$ duality, we obtain
$$\|\mathcal T^a_{t_1,t_2}(f,g)\|_2\lesssim_K\|a\|_{C^N(\mathcal I)}\min\{\|f\|_{H^{-\frac{d-3}{2}}}\|g\|_2,\|f\|_2\|g\|_{H^{-\frac{d-3}{2}}}\},$$
uniformly for $(t_1,t_2)\in K$, and by density this holds for all $f,g\in L^2$. Replacing $f$ and $g$ by $f_i$ and $g_j$ and using $\|f_i\|_{H^{-s}}\lesssim2^{-si}\|f_i\|_2$ and $\|g_j\|_{H^{-s}}\lesssim2^{-sj}\|g_j\|_2$ with $s=\frac{d-3}{2}$ proves \eqref{eq:weighted-fixed}.
\end{proof}

\begin{remark}\label{rem:general-shapes}
The proof of Lemma~\ref{lem:weighted-fixed} uses only two features of the averages, namely that the side lengths of the triangles stay in a compact subset of $\Omega$ and that the weight is smooth. Consequently, for every compact set $K'\subset\Omega$ there is an integer $N$ such that the weighted averages
$$\int_{\mathcal I_{\vec t}}a(y,z)f(x-y)g(x-z) d\sigma_{\vec t}(y,z)$$
satisfy
$$\left\|\int_{\mathcal I_{\vec t}}a(y,z)f(x-y)g(x-z) d\sigma_{\vec t}(y,z)\right\|_{L^2(dx)}\lesssim_{K'}\|a\|_{C^N}\min\{\|f\|_{H^{-\frac{d-3}{2}}}\|g\|_2,\|f\|_2\|g\|_{H^{-\frac{d-3}{2}}}\}$$
uniformly for $\vec t\in K'$, where $a$ is smooth on a neighborhood of $\bigcup_{\vec t\in K'}\mathcal I_{\vec t}$ and the $C^N$ norm is taken over such a neighborhood. Moreover, since the bound in Lemma~\ref{lem:weighted-fixed} only uses the Fourier supports of $f_i$ and $g_j$, it remains valid when $f_i$ and $g_j$ are replaced by functions in $L^2$ whose Fourier transforms are supported in the same dyadic annuli, such as $\partial_\ell f_i$ and $\partial_mg_j$.
\end{remark}

We next estimate derivatives with respect to the radii. Differentiating the inputs with respect to their spatial variables introduces smooth weights on $\mathcal I$, which allows us to apply Lemma~\ref{lem:weighted-fixed} together with \eqref{eq:Bernstein}.

\begin{lemma}\label{lem:scale-derivative}
Let $d\ge4$, and let $K\subset(0,\infty)^2$ be compact. For all $f,g\in L^2(\mathbb R^d)$, all $i,j\ge0$ and all $\mu,\nu\in\{0,1\}$,
\begin{equation}\label{eq:derivative-biparam}
\sup_{(t_1,t_2)\in K}\|\partial_{t_1}^{\mu}\partial_{t_2}^{\nu}\mathcal T_{t_1,t_2}(f_i,g_j)\|_2\lesssim_K2^{\mu i+\nu j-\frac{d-3}{2}\max(i,j)}\|f_i\|_2\|g_j\|_2.
\end{equation}
In particular, for every compact interval $J\subset(0,\infty)$,
\begin{equation}\label{eq:derivative-single}
\sup_{t\in J}(\|\mathcal T_t(f_i,g_j)\|_2+2^{-\max(i,j)}\|\partial_t\mathcal T_t(f_i,g_j)\|_2)\lesssim_J2^{-\frac{d-3}{2}\max(i,j)}\|f_i\|_2\|g_j\|_2.
\end{equation}
\end{lemma}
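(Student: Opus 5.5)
Differentiating under the integral sign turns each radius derivative into a weighted average of spatial derivatives of the inputs, so the plan is to reduce everything to Lemma~\ref{lem:weighted-fixed} and Remark~\ref{rem:general-shapes}. For Schwartz $f,g$ (and then by density) we have
$$\partial_{t_1}\mathcal T_{t_1,t_2}(f_i,g_j)(x)=-\sum_{\ell=1}^d\int_{\mathcal I}y_\ell\,(\partial_\ell f_i)(x-t_1y)g_j(x-t_2z)\,d\sigma(y,z)=-\sum_{\ell=1}^d\mathcal T^{a_\ell}_{t_1,t_2}(\partial_\ell f_i,g_j)(x),$$
where $a_\ell(y,z)=y_\ell$ is a smooth weight on $\mathcal I$ with $\|a_\ell\|_{C^N(\mathcal I)}\lesssim_N1$. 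In the same way $\partial_{t_2}$ produces the weights $b_m(y,z)=z_m$ acting on $\partial_mg_j$. The mixed derivative gives $\sum_{\ell,m}\mathcal T^{a_\ell b_m}_{t_1,t_2}(\partial_\ell f_i,\partial_mg_j)$, and $a_\ell b_m=y_\ell z_m$ is again smooth on $\mathcal I$.

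Each term is then estimated as follows. The functions $\partial_\ell f_i$ and $\partial_mg_j$ have Fourier transforms supported in the same dyadic annuli as $f_i$ and $g_j$. By the last sentence of Remark~\ref{rem:general-shapes}, the bound \eqref{eq:weighted-fixed} applies to them with the factor $2^{-\frac{d-3}{2}\max(i,j)}$, uniformly in $(t_1,t_2)\in K$. The Bernstein inequality \eqref{eq:Bernstein} gives $\|\partial_\ell f_i\|_2\lesssim2^{i}\|f_i\|_2$ and $\|\partial_mg_j\|_2\lesssim2^{j}\|g_j\|_2$. Summing the finitely many terms over $\ell,m\le d$ yields \eqref{eq:derivative-biparam} for all four choices of $(\mu,\nu)$; the case $\mu=\nu=0$ is just Lemma~\ref{lem:weighted-fixed} with $a\equiv1$.

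For \eqref{eq:derivative-single}, take $K=J\times J$ and use the chain rule. We have $\mathcal T_t=\mathcal T_{t,t}$, so $\partial_t\mathcal T_t=(\partial_{t_1}+\partial_{t_2})\mathcal T_{t_1,t_2}|_{t_1=t_2=t}$. By \eqref{eq:derivative-biparam} this is bounded by $(2^i+2^j)2^{-\frac{d-3}{2}\max(i,j)}\|f_i\|_2\|g_j\|_2\le2\cdot2^{\max(i,j)}2^{-\frac{d-3}{2}\max(i,j)}\|f_i\|_2\|g_j\|_2$, and multiplying by $2^{-\max(i,j)}$ gives the claim.

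The only point needing care is justifying the differentiation under the integral and the passage to general $f,g\in L^2$. For Schwartz inputs this is immediate, since $\sigma$ is a finite measure and the integrand is smooth in $(t_1,t_2,x)$. For $f,g\in L^2$, the pieces $f_i,g_j$ are smooth with all derivatives in $L^2$, so the identity holds in the $L^2$-valued sense. Alternatively one proves the estimate for Schwartz approximants and passes to the limit, using that the bounds are uniform. I do not expect a genuine obstacle here.
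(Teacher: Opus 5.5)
Your proposal is correct and follows the paper's proof essentially step for step. The paper also differentiates under the integral to get weighted averages with weights $y_\ell$, $z_m$, $y_\ell z_m$, applies Lemma~\ref{lem:weighted-fixed} in the form of Remark~\ref{rem:general-shapes} together with the Bernstein inequality \eqref{eq:Bernstein}, and deduces \eqref{eq:derivative-single} from the chain rule and $2^i+2^j\le2^{\max(i,j)+1}$. Your extra remarks on justifying the differentiation for $L^2$ inputs only spell out what the paper leaves implicit.
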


\begin{proof}
Differentiating under the integral sign and applying the chain rule, we find
\begin{equation}\label{eq:radius-derivatives}
\begin{aligned}
\partial_{t_1}\mathcal T_{t_1,t_2}(f_i,g_j)&=-\sum_{\ell=1}^d\mathcal T^{y_\ell}_{t_1,t_2}(\partial_\ell f_i,g_j),\\
\partial_{t_2}\mathcal T_{t_1,t_2}(f_i,g_j)&=-\sum_{\ell=1}^d\mathcal T^{z_\ell}_{t_1,t_2}(f_i,\partial_\ell g_j),
\end{aligned}
\end{equation}
where $\mathcal T^{y_\ell}_{t_1,t_2}$ is the weighted average with weight $a(y,z)=y_\ell$, and similarly
$$\partial_{t_1}\partial_{t_2}\mathcal T_{t_1,t_2}(f_i,g_j)=\sum_{\ell,m=1}^d\mathcal T^{y_\ell z_m}_{t_1,t_2}(\partial_\ell f_i,\partial_mg_j).$$
The weights $y_\ell$, $z_m$ and $y_\ell z_m$ are smooth on $\mathcal I$, and the derivatives $\partial_\ell f_i$ and $\partial_mg_j$ have their Fourier transforms in the same annuli as $f_i$ and $g_j$. Lemma~\ref{lem:weighted-fixed}, in the form described in Remark~\ref{rem:general-shapes}, and the Bernstein inequality \eqref{eq:Bernstein} therefore yield \eqref{eq:derivative-biparam}.

Finally, since $\mathcal T_t=\mathcal T_{t,t}$, the chain rule yields
$$\partial_t\mathcal T_t(f_i,g_j)=(\partial_{t_1}+\partial_{t_2})\mathcal T_{t_1,t_2}(f_i,g_j),$$
where the right-hand side is evaluated at $t_1=t_2=t$, and since $2^i+2^j\le2^{\max(i,j)+1}$, the estimate \eqref{eq:derivative-biparam} implies \eqref{eq:derivative-single}.
\end{proof}

\subsection{Maximal estimates over sets of radii}

We obtain maximal estimates by combining the frequency-localized bounds of the preceding subsection with a covering argument adapted from \cite{BorgesFosterOu2025}. The idea is that on an interval of length $\delta$ a function of $t$ cannot move much more than $\delta$ times its derivative, so its supremum over the interval is controlled by its average and the average of its derivative.

\begin{lemma}\label{lem:covering}
Let $1\le r<\infty$, let $E\subset[1,2]$ be nonempty, and let $0<\delta\le\frac14$. Suppose that $F(t,x)$ is complex-valued and jointly measurable on $[\frac12,\frac52]\times\mathbb R^d$, that it is continuously differentiable in $t\in[\frac12,\frac52]$ for almost every $x$, and that for some constant $B\ge0$
\begin{equation}\label{eq:bound_F_derivative_F}
\sup_{\frac12\le t\le\frac52}(\|F(t,\cdot)\|_r+\delta\|\partial_tF(t,\cdot)\|_r)\le B.
\end{equation}
Then
$$\left\|\sup_{t\in E}|F(t,\cdot)|\right\|_r\le3N(E,\delta)^{\frac1r}B.$$
\end{lemma}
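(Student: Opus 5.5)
The plan is a standard covering argument combined with the fundamental theorem of calculus on short intervals. Set $N=N(E,\delta)$ and choose intervals $I_1,\dots,I_N$ of length $\delta$ covering $E$. We may assume each $I_k$ meets $E$. Since $E\subset[1,2]$ and $\delta\le\frac14$, the enlarged interval $I_k^*$ (same center, length $2\delta$, or simply $I_k$ itself) lies inside $[\frac12,\frac52]$, so the hypothesis \eqref{eq:bound_F_derivative_F} is available there. Then pointwise
$$\sup_{t\in E}|F(t,x)|\le\max_{1\le k\le N}\sup_{t\in I_k}|F(t,x)|,$$
and raising to the $r$-th power and bounding the maximum by the sum gives
$$\Bigl\|\sup_{t\in E}|F(t,\cdot)|\Bigr\|_r^r\le\sum_{k=1}^N\Bigl\|\sup_{t\in I_k}|F(t,\cdot)|\Bigr\|_r^r.$$
It therefore suffices to show that $\|\sup_{t\in I}|F(t,\cdot)|\|_r\le3B$ for every interval $I\subset[\frac12,\frac52]$ of length $\delta$. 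Taking $r$-th roots then yields the factor $N^{1/r}$.

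For a single interval $I$ of length $\delta$, I would use that $F(\cdot,x)$ is $C^1$ for almost every $x$. For $t,s\in I$ we have $|F(t,x)|\le|F(s,x)|+\int_I|\partial_uF(u,x)|\,du$. Averaging in $s$ over $I$ gives
$$\sup_{t\in I}|F(t,x)|\le\frac1\delta\int_I|F(s,x)|\,ds+\int_I|\partial_uF(u,x)|\,du.$$
Next take $L^r$ norms in $x$ and apply Minkowski's integral inequality, which is legitimate since $r\ge1$ and $F$ is jointly measurable. The first term is then bounded by $\sup_s\|F(s,\cdot)\|_r\le B$. The second is bounded by $\delta\sup_u\|\partial_uF(u,\cdot)\|_r\le B$. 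The total is $2B\le3B$, and the extra slack absorbs any bookkeeping, for instance if one prefers closed intervals or slightly enlarged covering intervals.

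The only delicate points are technical. The first is measurability of $x\mapsto\sup_{t\in I}|F(t,x)|$. This holds because, by continuity in $t$ for almost every $x$, the supremum equals a supremum over the countable set $I\cap\mathbb Q$ (plus the endpoints), up to a null set. The second is ensuring the covering intervals stay inside $[\frac12,\frac52]$, which is where $\delta\le\frac14$ is used. Neither should pose real difficulty, so I expect no step to be a genuine obstacle.
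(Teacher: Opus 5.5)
Your proposal is correct and is essentially the paper's proof: cover $E$ by $N(E,\delta)$ intervals of length $\delta$ meeting $E$ (these stay inside $[\frac12,\frac52]$ because $\delta\le\frac14$), bound the supremum on each interval by an average of $|F|$ plus $\int|\partial_tF|$ via the fundamental theorem of calculus, apply Minkowski's inequality, and sum the $r$-th powers. The only difference is cosmetic: the paper averages over the doubled interval $2I_\nu$ and gets $B+2B=3B$, whereas averaging over $I_k$ itself gives $2B$ (indeed even $\frac1\delta\int_{I_k}(\|F(s,\cdot)\|_r+\delta\|\partial_sF(s,\cdot)\|_r)\,ds\le B$); for measurability you should also note, as the paper does, that $\sup_{t\in E}|F(t,\cdot)|$ can be computed over a countable dense subset of $E$, not only the suprema over the intervals $I_k$.
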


\begin{proof}
The supremum over $E$ may be taken over a countable dense subset of $E$, because $F$ is continuous in $t$ for almost every $x$, so it is measurable. Let $\{I_\nu\}_{\nu=1}^N$ be a cover of $E$ by $N=N(E,\delta)$ intervals of length $\delta$, each meeting $E$, and let $2I_\nu$ be the interval with the same center and twice the length. Since $E\subset[1,2]$ and $\delta\le\frac14$, we have $2I_\nu\subset[\frac12,\frac52]$.

Fix $\nu$. By the fundamental theorem of calculus, for $t\in I_\nu$ and $s\in2I_\nu$,
$$|F(t,x)|\le|F(s,x)|+\int_{2I_\nu}|\partial_uF(u,x)| du,$$
and averaging over $s\in2I_\nu$ leads to
$$\sup_{t\in I_\nu}|F(t,x)|\le\frac{1}{|2I_\nu|}\int_{2I_\nu}|F(s,x)| ds+\int_{2I_\nu}|\partial_sF(s,x)| ds.$$
Minkowski's inequality and \eqref{eq:bound_F_derivative_F} then show that
$$\left\|\sup_{t\in I_\nu}|F(t,\cdot)|\right\|_r\le\sup_{s\in2I_\nu}\|F(s,\cdot)\|_r+2\delta\sup_{s\in2I_\nu}\|\partial_sF(s,\cdot)\|_r\le3B.$$
Consequently,
$$\left\|\sup_{t\in E}|F(t,\cdot)|\right\|_r^r\le\sum_{\nu=1}^{N(E,\delta)}\left\|\sup_{t\in I_\nu}|F(t,\cdot)|\right\|_r^r\le N(E,\delta)(3B)^r,$$
and taking the $r$-th root proves the lemma.
\end{proof}

For $f,g\in\mathcal S(\mathbb R^d)$ and $i,j\ge0$ we define the frequency-localized maximal operator
$$\mathcal T_E^{i,j}(f,g)(x)=\sup_{t\in E}|\mathcal T_t(f_i,g_j)(x)|.$$
For Schwartz functions the series $\sum_{i,j}\mathcal T_t(f_i,g_j)(x)$ converges absolutely to $\mathcal T_t(f,g)(x)$, so the triangle inequality implies
\begin{equation}\label{eq:TE_decomposition}
\mathcal T_E(f,g)(x)\le\sum_{i,j\ge0}\mathcal T_E^{i,j}(f,g)(x).
\end{equation}
Lemmas~\ref{lem:covering} and~\ref{lem:scale-derivative} together provide $L^2$ estimates for the frequency-localized maximal operators in terms of the covering numbers of $E$.

\begin{proposition}\label{prop:pieces}
Let $d\ge4$, and let $E\subset[1,2]$ be nonempty with $\overline{\dim}_{\mathrm M}E=\beta\in[0,1]$. For all $f,g\in\mathcal S(\mathbb R^d)$ and all $i,j\ge0$,
\begin{equation}\label{eq:pieces-cover}
\|\mathcal T_E^{i,j}(f,g)\|_2\lesssim N(E,2^{-\max(i,j)})^{\frac12}2^{-\frac{d-3}{2}\max(i,j)}\|f_i\|_2\|g_j\|_2.
\end{equation}
In particular, for every $\varepsilon>0$,
\begin{equation}\label{eq:pieces-decay}
\|\mathcal T_E^{i,j}(f,g)\|_2\lesssim_{E,\varepsilon}2^{-\frac{d-3-\beta-\varepsilon}{2}\max(i,j)}\|f_i\|_2\|g_j\|_2.
\end{equation}
\end{proposition}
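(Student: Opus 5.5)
The plan is to apply Lemma~\ref{lem:covering} with $r=2$ to the function $F(t,x)=\mathcal T_t(f_i,g_j)(x)$ and the scale $\delta=2^{-\max(i,j)}$, with the constant $B$ supplied by Lemma~\ref{lem:scale-derivative}. Since $E\subset[1,2]$, the relevant range of radii is $[\frac12,\frac52]$, so we invoke \eqref{eq:derivative-single} with the compact interval $J=[\frac12,\frac52]$. It gives
$$\sup_{\frac12\le t\le\frac52}\left(\|F(t,\cdot)\|_2+2^{-\max(i,j)}\|\partial_tF(t,\cdot)\|_2\right)\lesssim2^{-\frac{d-3}{2}\max(i,j)}\|f_i\|_2\|g_j\|_2=:B,$$
which is exactly hypothesis \eqref{eq:bound_F_derivative_F} with $\delta=2^{-\max(i,j)}$.

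\textbf{The case $\max(i,j)\ge2$.} Here $\delta\le\frac14$. We still need the regularity assumptions of Lemma~\ref{lem:covering}. For $f,g\in\mathcal S$ the pieces $f_i,g_j$ are Schwartz. Hence $\mathcal T_t(f_i,g_j)(x)$ is jointly continuous, and it is $C^1$ in $t$ by differentiation under the integral sign over the compact manifold $\mathcal I$, as in \eqref{eq:radius-derivatives}. This gives joint measurability and continuous differentiability. Lemma~\ref{lem:covering} then yields
$$\|\mathcal T_E^{i,j}(f,g)\|_2\le3N(E,2^{-\max(i,j)})^{\frac12}B,$$
which is \eqref{eq:pieces-cover}.

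\textbf{The case $\max(i,j)\le1$.} Only finitely many pairs occur, and we apply Lemma~\ref{lem:covering} with $\delta=\frac14$. The derivative bound still holds, since $\frac14\|\partial_tF\|_2\le2^{-\max(i,j)}\|\partial_tF\|_2$ in this range. Moreover $N(E,\frac14)$ and $N(E,2^{-\max(i,j)})$ are both at most an absolute constant and at least $1$. So \eqref{eq:pieces-cover} holds here too, with a worse absolute constant.

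\textbf{Deriving \eqref{eq:pieces-decay}.} We insert the covering bound \eqref{eq:minkowski-cover} with $\varepsilon$ and $\delta=2^{-\max(i,j)}\le1$. This gives $N(E,2^{-\max(i,j)})^{1/2}\le C_{E,\varepsilon}^{1/2}2^{\frac{\beta+\varepsilon}{2}\max(i,j)}$. Combining it with the factor $2^{-\frac{d-3}{2}\max(i,j)}$ produces the exponent $-\frac{d-3-\beta-\varepsilon}{2}\max(i,j)$, as claimed.

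The argument is essentially bookkeeping. The only point needing care is matching the derivative weight $2^{-\max(i,j)}$ in \eqref{eq:derivative-single} with the scale $\delta$, together with the restriction $\delta\le\frac14$ in the covering lemma, and the low-frequency case disposes of the latter.
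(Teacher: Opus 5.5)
Your proposal is correct and follows the paper's proof: you apply Lemma~\ref{lem:covering} with $r=2$ to $F(t,x)=\mathcal T_t(f_i,g_j)(x)$, take $B$ from \eqref{eq:derivative-single} on $J=[\frac12,\frac52]$, and then insert \eqref{eq:minkowski-cover}. The only difference is cosmetic: the paper takes $\delta=2^{-\max(i,j)-2}$ for all $i,j$, so that $\delta\le\frac14$ holds automatically, and uses $N(E,2^{-k-2})\le4N(E,2^{-k})$, which removes the need for your case split at $\max(i,j)\le1$.
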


\begin{proof}
Let $k=\max(i,j)$ and $\delta=2^{-k-2}$, and apply Lemma~\ref{lem:covering} with $r=2$ to $F(t,x)=\mathcal T_t(f_i,g_j)(x)$. Since $\delta\le2^{-k}$, the estimate \eqref{eq:derivative-single} with $J=[\frac12,\frac52]$ shows that we may take
$$B=C2^{-\frac{d-3}{2}k}\|f_i\|_2\|g_j\|_2.$$
Every interval of length $2^{-k}$ is covered by four intervals of length $2^{-k-2}$, so $N(E,2^{-k-2})\le4N(E,2^{-k})$, and \eqref{eq:pieces-cover} follows. Inserting \eqref{eq:minkowski-cover} into \eqref{eq:pieces-cover} gives \eqref{eq:pieces-decay}.
\end{proof}

We are now ready to prove Theorem~\ref{thm:main_Sobolev_fractal_dilation}.

\begin{proof}[Proof of Theorem~\ref{thm:main_Sobolev_fractal_dilation}]
Choose $\varepsilon>0$ so small that
$$\rho=\frac{d-3-\beta-\varepsilon}{2}-s_1-s_2>0,$$
and let $f,g\in\mathcal S(\mathbb R^d)$. By \eqref{eq:TE_decomposition} and \eqref{eq:pieces-decay},
$$\|\mathcal T_E(f,g)\|_2\lesssim_{E,\varepsilon}\sum_{i,j\ge0}2^{-\frac{d-3-\beta-\varepsilon}{2}\max(i,j)}\|f_i\|_2\|g_j\|_2.$$
Put $a_i=2^{-s_1i}\|f_i\|_2$ and $b_j=2^{-s_2j}\|g_j\|_2$. Since $s_1,s_2\ge0$ and $s_1i+s_2j\le(s_1+s_2)\max(i,j)$, each term of the last sum is at most $2^{-\rho\max(i,j)}a_ib_j$, and the Cauchy--Schwarz inequality leads to
$$\|\mathcal T_E(f,g)\|_2\lesssim_{E,\varepsilon}\left(\sum_{i,j\ge0}2^{-2\rho\max(i,j)}\right)^{\frac12}\left(\sum_{i\ge0}a_i^2\right)^{\frac12}\left(\sum_{j\ge0}b_j^2\right)^{\frac12}.$$
The first factor is finite, because $2\max(i,j)\ge i+j$ and therefore
$$\sum_{i,j\ge0}2^{-2\rho\max(i,j)}\le\left(\sum_{i\ge0}2^{-\rho i}\right)^2<\infty,$$
and by \eqref{eq:LP} the other two factors are comparable to $\|f\|_{H^{-s_1}}$ and $\|g\|_{H^{-s_2}}$. Hence
$$\|\mathcal T_E(f,g)\|_2\lesssim\|f\|_{H^{-s_1}}\|g\|_{H^{-s_2}}$$
for Schwartz functions $f$ and $g$. For Schwartz functions $f,g,f',g'$ the pointwise inequality $|\mathcal T_E(f,g)-\mathcal T_E(f',g')|\le\mathcal T_E(f-f',g)+\mathcal T_E(f',g-g')$, combined with this estimate, yields
$$\|\mathcal T_E(f,g)-\mathcal T_E(f',g')\|_2\lesssim\|f-f'\|_{H^{-s_1}}\|g\|_{H^{-s_2}}+\|f'\|_{H^{-s_1}}\|g-g'\|_{H^{-s_2}},$$
so $\mathcal T_E$ extends uniquely by continuity to $H^{-s_1}(\mathbb R^d)\times H^{-s_2}(\mathbb R^d)$ with the same bound.
\end{proof}

The Sobolev embedding argument used in the proof of \cite[Proposition~6.2]{BorgesFosterOu2025} leads to the following $L^p$-improving consequence.

\begin{corollary}\label{thm:lp-improving}
Let $E\subset[1,2]$ be nonempty with upper Minkowski dimension $\beta$, and suppose that $d>3+\beta$. If $1<p_1,p_2\le2$ satisfy
\begin{equation}\label{eq:lp-improving-range}
\frac{1}{p_1}+\frac{1}{p_2}<1+\frac{d-3-\beta}{2d},
\end{equation}
then
$$\|\mathcal T_E(f,g)\|_2\lesssim\|f\|_{p_1}\|g\|_{p_2}.$$
\end{corollary}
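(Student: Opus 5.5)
The plan is to deduce the corollary from Theorem~\ref{thm:main_Sobolev_fractal_dilation} by the dual Sobolev embedding. For $1<p\le2$ and $s\ge0$ with $\frac sd=\frac1p-\frac12$, the Hardy--Littlewood--Sobolev inequality gives $L^p(\mathbb R^d)\hookrightarrow\dot H^{-s}(\mathbb R^d)$. Since $\langle\xi\rangle^{-2s}\le|\xi|^{-2s}$, this also gives $\|f\|_{H^{-s}}\lesssim\|f\|_p$. When $p=2$ we simply take $s=0$.

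Given $p_1,p_2\in(1,2]$ satisfying \eqref{eq:lp-improving-range}, set $s_i=d(\frac1{p_i}-\frac12)\ge0$ for $i=1,2$. Then
$$s_1+s_2=d\left(\frac1{p_1}+\frac1{p_2}-1\right)<d\cdot\frac{d-3-\beta}{2d}=\frac{d-3-\beta}{2},$$
which is exactly the hypothesis of Theorem~\ref{thm:main_Sobolev_fractal_dilation}, and $d>3+\beta$ is assumed. That theorem then yields, for Schwartz functions,
$$\|\mathcal T_E(f,g)\|_2\lesssim\|f\|_{H^{-s_1}}\|g\|_{H^{-s_2}}\lesssim\|f\|_{p_1}\|g\|_{p_2}.$$

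To pass to general $f\in L^{p_1}$ and $g\in L^{p_2}$, we use density of $\mathcal S$ in $L^{p_i}$ together with the sublinear difference inequality from the end of the proof of Theorem~\ref{thm:main_Sobolev_fractal_dilation}. Alternatively, since $\mathcal T_t$ is a positive average, we may pass to $|f|,|g|$ and use monotone convergence. Either route extends the bound to all $f\in L^{p_1}$, $g\in L^{p_2}$.

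The only point needing care is the embedding step at the endpoint where some $p_i$ is close to $1$. Hardy--Littlewood--Sobolev requires $p>1$ and $0<s<d$. Here $s_i<\frac d2$ and $p_i>1$, so the embedding is valid, and no other obstacle arises.
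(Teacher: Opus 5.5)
Your proof is correct and is essentially the paper's argument: you set $s_i=d(\frac1{p_i}-\frac12)$, use the dual Sobolev (Hardy--Littlewood--Sobolev) embedding $L^{p_i}(\mathbb R^d)\hookrightarrow H^{-s_i}(\mathbb R^d)$, observe that \eqref{eq:lp-improving-range} is equivalent to $s_1+s_2<\frac{d-3-\beta}{2}$, and invoke Theorem~\ref{thm:main_Sobolev_fractal_dilation}. Your paragraph on passing from Schwartz functions to general $L^{p_i}$ inputs covers a step the paper leaves implicit. Of your two routes, the density one is the sound one (take the supremum over a countable dense subset of $E$ and apply Fatou, as in the proof of Theorem~\ref{thm:transfer}). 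The monotone-convergence route fails as stated, because a general nonnegative $L^p$ function need not be a monotone limit from below of Schwartz functions.
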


\begin{proof}
Set $s_i=d(\frac{1}{p_i}-\frac12)$ for $i=1,2$. Since $1<p_i\le2$, the dual form of the Sobolev embedding provides the inclusion $L^{p_i}(\mathbb R^d)\hookrightarrow H^{-s_i}(\mathbb R^d)$; see \cite[Chapter~1]{GrafakosModern2014}. Moreover, \eqref{eq:lp-improving-range} implies
$$s_1+s_2=d\left(\frac{1}{p_1}+\frac{1}{p_2}-1\right)<\frac{d-3-\beta}{2},$$
and the conclusion follows from Theorem~\ref{thm:main_Sobolev_fractal_dilation}.
\end{proof}

\clearpage
\section{\texorpdfstring{Fixed-radius $L^p$-improving estimates}{Fixed-radius Lp-improving estimates}}\label{sec:fixed-improving}

In this section we prove asymmetric $L^p$-improving estimates for the fixed-radius triangle averaging operator $\mathcal T_1$. Following the classical analytic interpolation method for spherical averages \cite{Strichartz1970}, we introduce an analytic family of operators, establish bounds for this family on the two edges of a strip, and apply complex interpolation to obtain Theorem~\ref{thm:asymmetric}.

\subsection{An analytic family of triangle averages}

We vary the shape of the triangle while keeping two of its side lengths equal to one, and the resulting family of averaging operators contains $\mathcal T_1$ as its equilateral member. For $-1<c<1$ let
$$\mathcal I(c)=\{(y,z)\in\mathbb R^{2d}:|y|=|z|=1,y\cdot z=c\},$$
let $\sigma_c$ be the normalized surface measure on $\mathcal I(c)$, and define
$$\mathcal U_c(f,g)(x)=\int_{\mathcal I(c)}f(x-y)g(x-z) d\sigma_c(y,z).$$
Since $|y-z|=\sqrt{2-2c}$ on $\mathcal I(c)$, these operators average over nondegenerate triangles with side lengths $(1,1,\sqrt{2-2c})$, and in particular $\mathcal U_{\frac12}=\mathcal T_1$. As for $\mathcal I$, the group $O(d)$ acts transitively on $\mathcal I(c)$, and $\sigma_c$ is its unique invariant probability measure.

To differentiate with respect to the shape parameter $c$, we parametrize $\mathcal I(c)$ by a fixed manifold. Let
$$V_2(\mathbb R^d)=\{(y,w):|y|=|w|=1,y\cdot w=0\}$$
be the manifold of orthonormal pairs, with its rotation-invariant probability measure $\nu_2$, and for $-1<c<1$ put
$$z_c=cy+\sqrt{1-c^2}w.$$
Since $y$ and $w$ are orthogonal unit vectors, $|z_c|^2=c^2+(1-c^2)=1$ and $y\cdot z_c=c$, so $(y,z_c)\in\mathcal I(c)$. Conversely, for every $(y,z)\in\mathcal I(c)$ the vector $w=\frac{z-cy}{\sqrt{1-c^2}}$ satisfies $|w|=1$ and $y\cdot w=0$. Hence $(y,w)\mapsto(y,z_c)$ is a bijection of $V_2(\mathbb R^d)$ onto $\mathcal I(c)$, and since it commutes with rotations, it pushes $\nu_2$ forward to the invariant probability measure $\sigma_c$. Consequently,
\begin{equation}\label{eq:shapeparam_2}
\mathcal U_c(f,g)(x)=\int_{V_2(\mathbb R^d)}f(x-y)g(x-z_c) d\nu_2(y,w).
\end{equation}
Now the domain of integration and the measure no longer depend on $c$, so differentiation in $c$ produces spatial derivatives of $g$ with smooth coefficients. Alternatively, we may fix $z$ and write $y=cz+\sqrt{1-c^2}w$, which places these derivatives on $f$ instead. Both parametrizations represent the same operator, and Figure~\ref{fig:shape-family} illustrates the first one. In particular, $c\mapsto\mathcal U_c(f,g)(x)$ is smooth on $(-1,1)$ for Schwartz functions $f$ and $g$.

\begin{figure}[htbp]
\centering
\begin{tikzpicture}[scale=1.3]
\coordinate (O) at (0,0);
\coordinate (Y) at (3,0);
\coordinate (W) at (0,3);
\coordinate (Z) at ({3*cos(60)},{3*sin(60)});
\coordinate (Zc) at ({3*cos(42)},{3*sin(42)});
\draw[gray!50] (3,0) arc (0:95:3);
\fill[blue!8] (O) -- (Y) -- (Z) -- cycle;
\draw[->,thick] (O) -- (Y);
\draw[->,gray!70!black] (O) -- (W);
\draw[->,thick,blue!60!black] (O) -- (Z);
\draw[thick,blue!60!black] (Y) -- (Z);
\draw[->,thick,green!45!black,densely dashed] (O) -- (Zc);
\draw[green!45!black,densely dashed] (Y) -- (Zc);
\draw[dotted,thick] (Zc) -- ({3*cos(42)},0);
\fill (O) circle (1.3pt) node[below left] {$0$};
\node[below right] at (Y) {$y$};
\node[left] at (W) {$w$};
\node[above left] at (Z) {$z_{c_0}$};
\node[above right] at (Zc) {$z_c$};
\node[below] at ({3*cos(42)},0) {$cy$};
\node[blue!60!black] at ($(Y)!0.5!(Z)+(-0.22,-0.12)$) {$1$};
\node[green!45!black,anchor=west] at ($(Y)!0.5!(Zc)+(0.08,0.04)$) {$\sqrt{2-2c}$};
\end{tikzpicture}
\caption{The family $\mathcal I(c)$ consists of pairs of unit vectors with $y\cdot z=c$, so that the triangle with vertices $0$, $y$ and $z$ has third side of length $\sqrt{2-2c}$. The equilateral member $c_0=\frac12$, whose third side has length one, is shaded, and a second member of the family is dashed. Writing $z_c=cy+\sqrt{1-c^2}w$ with $w$ a unit vector orthogonal to $y$ fixes the domain of integration, so derivatives in $c$ fall on $g$, while fixing $z$ instead puts them on $f$.}
\label{fig:shape-family}
\end{figure}

Fix $c_0=\frac12$ and $0<\epsilon_0<\frac18$, so that $c_0+s$ stays in $(\frac38,\frac58)$ when $|s|<\epsilon_0$, and choose an even function $\chi\in C_c^\infty((-\epsilon_0,\epsilon_0))$ with $\chi\equiv1$ near zero. To build an analytic family through $\mathcal U_{c_0}=\mathcal T_1$, we use the normalized one-dimensional Riesz distributions
$$k_\zeta(s)=\frac{|s|^{\zeta-1}}{\Gamma(\frac{\zeta}{2})},$$
initially defined for $\operatorname{Re}\zeta>0$ as locally integrable functions acting on compactly supported smooth test functions. We record the standard properties of these distributions that we need; see \cite[Chapter~I, Sections~3.3 and~3.5]{GelfandShilov1964} and \cite[Section~3.2]{HormanderI2003}.

\begin{lemma}\label{lem:Riesz-continuation}
The family $k_\zeta$, $\operatorname{Re}\zeta>0$, admits an entire distribution-valued continuation with $k_0=\delta_0$, which has the following two properties. First, for every $\zeta\in\mathbb C$, every $\delta>0$ and every $H\in C_c^\infty(\mathbb R)$,
\begin{equation}\label{eq:Riesz-homogeneity}
\langle k_\zeta(s),H(s)\rangle=\delta^\zeta\langle k_\zeta(t),H(\delta t)\rangle.
\end{equation}
Second, let $J\subset\mathbb R$ be a compact interval, let $R>0$, and let $M\ge0$ be an integer with $M+u>0$ for all $u\in J$. Then there exist a constant $C>0$ and an integer $N\ge0$ such that
\begin{equation}\label{eq:Riesz-growth}
|\langle k_{u+i\tau},H\rangle|\le C(1+|\tau|)^Ne^{\frac{\pi}{4}|\tau|}\sum_{m=0}^M\|H^{(m)}\|_\infty
\end{equation}
for all $u\in J$, all $\tau\in\mathbb R$ and all $H\in C_c^\infty((-R,R))$.
\end{lemma}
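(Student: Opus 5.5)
The plan is to use the standard Gelfand--Shilov construction of the continuation and to read off both properties from explicit formulas. For $\operatorname{Re}\zeta>0$ and $H\in C_c^\infty(\mathbb R)$ write $\langle k_\zeta,H\rangle=\Gamma(\frac\zeta2)^{-1}\int_0^\infty s^{\zeta-1}H_e(s)\,ds$ with $H_e(s)=H(s)+H(-s)$. Since $H_e$ is even, its Taylor expansion at $0$ contains only even powers. For every integer $n\ge0$ and $\operatorname{Re}\zeta>-2n$ I split the integral at $s=1$ and subtract $\sum_{l<n}H_e^{(2l)}(0)s^{2l}/(2l)!$ on $(0,1)$. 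The subtracted terms contribute $\sum_{l<n}\frac{H_e^{(2l)}(0)}{(2l)!(\zeta+2l)}$, which is meromorphic with simple poles at $\zeta=-2l$. These poles are exactly cancelled by the zeros of $1/\Gamma(\frac\zeta2)$, which vanishes at $\zeta=0,-2,-4,\dots$. Hence the continuation is entire, and the formulas for different $n$ agree on overlaps by uniqueness of analytic continuation. At $\zeta=0$ only the $l=0$ term survives. Using $\frac{1}{\Gamma(\zeta/2)}\sim\frac{\zeta}{2}$ as $\zeta\to0$, this gives $\frac{\zeta}{2}\cdot\frac{H_e(0)}{\zeta}=\frac{2H(0)}{2}=H(0)$, so $k_0=\delta_0$.

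For the homogeneity identity \eqref{eq:Riesz-homogeneity}, the change of variables $s=\delta t$ shows $\langle k_\zeta(s),H(s)\rangle=\delta^\zeta\langle k_\zeta(t),H(\delta t)\rangle$ for $\operatorname{Re}\zeta>0$. Both sides are entire in $\zeta$, because $\delta^\zeta=e^{\zeta\log\delta}$ is entire. They therefore agree for all $\zeta$.

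For the growth bound \eqref{eq:Riesz-growth} I would avoid the Taylor subtraction, whose constants are awkward, and instead integrate by parts $M$ times. For $\operatorname{Re}\zeta>0$,
\[
\int_0^\infty s^{\zeta-1}H_e(s)\,ds=\frac{(-1)^M}{\zeta(\zeta+1)\cdots(\zeta+M-1)}\int_0^\infty s^{\zeta+M-1}H_e^{(M)}(s)\,ds .
\]
The right side is analytic for $\operatorname{Re}\zeta>-M$ away from the poles $\zeta=0,-1,\dots,-(M-1)$. So for $\zeta=u+i\tau$ with $u\in J$, we have
\[
\langle k_\zeta,H\rangle=\frac{(-1)^M}{\Gamma(\frac\zeta2)\,\zeta(\zeta+1)\cdots(\zeta+M-1)}\int_0^R s^{\zeta+M-1}H_e^{(M)}(s)\,ds .
\]
The integral is bounded by $C_{J,R}\|H^{(M)}\|_\infty$, since $u+M>0$ and $R$ is fixed.

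The main step is bounding the prefactor $|\Gamma(\frac\zeta2)\prod_{m<M}(\zeta+m)|^{-1}$ by $C(1+|\tau|)^Ne^{\pi|\tau|/4}$ uniformly on the strip. For $|\tau|\ge1$ there are no poles, and Stirling's formula gives $|\Gamma(\frac{u+i\tau}{2})|^{-1}\approx|\tau/2|^{\frac12-\frac u2}e^{\pi|\tau|/4}$ uniformly for $u\in J$. This produces the stated exponential, and the product $\prod(\zeta+m)$ only makes the bound better. For $|\tau|\le1$, the function $\frac{1}{\Gamma(\zeta/2)\prod_{m<M}(\zeta+m)}$ can have poles at the odd negative integers $-m$ in $J$, since $1/\Gamma(\zeta/2)$ vanishes only at even nonpositive integers. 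To handle this I would not integrate by parts blindly. Instead I would use the even Taylor-subtraction representation from the first paragraph with $2n\ge M$, which is manifestly entire. Then $\sum_{m\le 2n}\|H^{(m)}\|_\infty$ controls everything on the compact set $\{u\in J,|\tau|\le1\}$, where $H_e^{(2l)}(0)$ is bounded by the norms and the remainder integral by $\|H^{(2n)}\|_\infty$.

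If the statement insists on derivatives only up to $M$ with $M+u>0$, I would use the evenness of $H_e$. The odd derivatives $H_e^{(2l+1)}(0)$ vanish, so the boundary terms at the odd poles vanish and these singularities are removable. A compactness and maximum-principle argument in $\zeta$ then yields the uniform bound. I expect this removable-singularity bookkeeping near odd integers to be the only delicate point.
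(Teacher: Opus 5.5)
Your construction of the continuation, the identification $k_0=\delta_0$ and the proof of \eqref{eq:Riesz-homogeneity} match the paper. The paper uses a smooth even cutoff $\eta$ rather than a sharp split at $s=1$, which makes no difference. The routes diverge for \eqref{eq:Riesz-growth}, and your proposal is correct but takes a detour there.

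The paper reuses the Taylor-subtraction formula with the given $M$. It subtracts the Taylor polynomial of $H(s)+H(-s)$ of order $M-1$, which has only even powers. It then bounds the remainder by $Cs^M\sum_{m\le M}\|H^{(m)}\|_\infty$ via the $M$-th order remainder, whatever the parity of $M$. That single representation is valid on all of $\operatorname{Re}\zeta>-M$. Its only poles sit at even integers, where $1/\Gamma(\zeta/2)$ vanishes, and Stirling plus compactness finish the proof.

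So your concern that the even subtraction needs $2n\ge M$ derivatives is unfounded: one may stop at order $M-1$. Your intermediate fallback would cost an extra derivative when $M$ is odd. That proves a weaker statement than the one claimed, though it would be harmless for the later application, where $M$ is at your disposal.

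Your integration by parts instead creates artificial poles of $\prod_{m<M}(\zeta+m)^{-1}$ at odd negative integers, which $1/\Gamma(\zeta/2)$ does not cancel. Your repair does work. $\langle k_\zeta,H\rangle$ is entire and equals the integrated-by-parts expression on $\{\operatorname{Re}\zeta>-M\}$ minus these points. Hence on the circle of radius $\frac12$ about each odd $-(2l+1)\in J$ one has $|\langle k_\zeta,H\rangle|\lesssim\|H^{(M)}\|_\infty$. These circles stay in $\operatorname{Re}\zeta\ge -M+\frac12$, and the maximum modulus principle carries the bound inside.

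Your route yields a bound in terms of the top derivative alone, at the price of this removable-singularity bookkeeping. The paper's single representation avoids that bookkeeping entirely.
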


\begin{proof}
For $\operatorname{Re}\zeta>0$ and $H\in C_c^\infty((-R,R))$ we have
$$\langle k_\zeta,H\rangle=\frac{1}{\Gamma(\frac{\zeta}{2})}\int_0^Rs^{\zeta-1}G(s) ds,$$
where $G(s)=H(s)+H(-s)$ is even. Fix an even function $\eta\in C_c^\infty((-R,R))$ with $\eta=1$ on $[-\frac R2,\frac R2]$ and an integer $M\ge0$, and subtract from $G$ its Taylor polynomial of order $M-1$ at the origin, which contains only even powers of $s$. The remainder
$$r_M(s)=G(s)-2\eta(s)\sum_{0\le2n<M}\frac{H^{(2n)}(0)}{(2n)!}s^{2n}$$
satisfies $|r_M(s)|\le C_{R,M}s^M\sum_{m=0}^M\|H^{(m)}\|_\infty$ for $0<s<R$, by Taylor's theorem on $[0,\frac R2]$ and trivially on $[\frac R2,R)$. Consequently,
\begin{equation}\label{eq:Taylor_subtraction_formula}
\langle k_\zeta,H\rangle=\frac{1}{\Gamma(\frac{\zeta}{2})}\int_0^Rs^{\zeta-1}r_M(s) ds+2\sum_{0\le2n<M}\frac{H^{(2n)}(0)}{(2n)!}\cdot\frac{J_n(\zeta)}{\Gamma(\frac{\zeta}{2})},
\end{equation}
where $J_n(\zeta)=\int_0^Rs^{\zeta+2n-1}\eta(s) ds$. The integral involving $r_M$ converges absolutely and is holomorphic for $\operatorname{Re}\zeta>-M$. Since $\eta=1$ on $[0,\frac R2]$, we have
$$J_n(\zeta)=\frac{1}{\zeta+2n}\left(\frac R2\right)^{\zeta+2n}+\int_{\frac R2}^Rs^{\zeta+2n-1}\eta(s) ds,$$
so $J_n$ is meromorphic with a single simple pole at $\zeta=-2n$, and this pole is cancelled by the zero of the entire function $\frac{1}{\Gamma(\frac{\zeta}{2})}$ at the same point. The right-hand side of \eqref{eq:Taylor_subtraction_formula} is therefore holomorphic in the half-plane $\operatorname{Re}\zeta>-M$, and it coincides with $\langle k_\zeta,H\rangle$ when $\operatorname{Re}\zeta>0$. The continuations obtained for different values of $M$ agree on their common domains, and since $M$ is arbitrary, $\langle k_\zeta,H\rangle$ extends to an entire function of $\zeta$. The estimates below show that for each $\zeta$ this extension depends continuously on finitely many derivatives of $H$, so it defines a distribution.

To identify $k_0$, take $M=1$. At $\zeta=0$ the integral involving $r_1$ is regular and is multiplied by $\frac{1}{\Gamma(0)}=0$, while $J_0(\zeta)=\frac1\zeta+O(1)$ and $\frac{1}{\Gamma(\frac{\zeta}{2})}=\frac\zeta2+O(\zeta^2)$. Hence $\langle k_0,H\rangle=2H(0)\cdot\frac12=H(0)$, that is, $k_0=\delta_0$. The homogeneity relation \eqref{eq:Riesz-homogeneity} follows from the substitution $s=\delta t$ when $\operatorname{Re}\zeta>0$, and it extends to all $\zeta$ because both sides are entire.

For the growth bound, write $\zeta=u+i\tau$ with $u\in J$ and use \eqref{eq:Taylor_subtraction_formula} with the given $M$. Since $u+M$ stays positive and bounded away from zero on the compact interval $J$, the integral involving $r_M$ is bounded by a constant multiple of $\sum_{m\le M}\|H^{(m)}\|_\infty$. Stirling's formula shows that
$$\left|\frac{1}{\Gamma(\frac{u+i\tau}{2})}\right|\lesssim_J(1+|\tau|)^Ne^{\frac\pi4|\tau|},$$
and the entire functions $\frac{J_n(\zeta)}{\Gamma(\frac{\zeta}{2})}$ obey the same bound, because $J_n$ is bounded on $\{u+i\tau:u\in J,|u+i\tau+2n|\ge1\}$ while the complementary set is compact. Since $|H^{(2n)}(0)|\le\|H^{(2n)}\|_\infty$, this proves \eqref{eq:Riesz-growth}.
\end{proof}

For Schwartz functions $f$ and $g$ and each $x\in\mathbb R^d$, the function $s\mapsto\chi(s)\mathcal U_{c_0+s}(f,g)(x)$ is smooth and supported in $(-\epsilon_0,\epsilon_0)$, and here and below we extend it by zero outside this interval. We may therefore define the analytic family
$$\mathcal A^\zeta(f,g)(x)=\langle k_\zeta(s),\chi(s)\mathcal U_{c_0+s}(f,g)(x)\rangle_s.$$
By Lemma~\ref{lem:Riesz-continuation}, $\zeta\mapsto\mathcal A^\zeta(f,g)(x)$ is entire for every $x$, and since $k_0=\delta_0$ and $\chi(0)=1$,
$$\mathcal A^0(f,g)=\mathcal U_{c_0}(f,g)=\mathcal T_1(f,g).$$
When $\operatorname{Re}\zeta>0$, the operator $\mathcal A^\zeta$ is an average of the operators $\mathcal U_c$ against the weight $\chi(c-c_0)k_\zeta(c-c_0)$ in the shape parameter. Decreasing $\operatorname{Re}\zeta$ makes this weight more singular at $c=c_0$, and at $\zeta=0$ it collapses to the point mass at the equilateral triangle.

\subsection{The left boundary estimate}

We now establish the bound
$$\mathcal A^{-\frac{d-3}{2}+i\tau}:L^2(\mathbb R^d)\times L^2(\mathbb R^d)\longrightarrow L^2(\mathbb R^d)$$
for all $\tau\in\mathbb R$, with constants of at most exponential growth in $|\tau|$. On this line the Riesz distribution acts as a derivative of order $\frac{d-3}{2}$ in the shape parameter, and the main step is to pay for these derivatives with the Sobolev smoothing of Section~\ref{sec:fixed}. We place the shape derivatives on the input with lower frequency, while the smoothing gain comes from the input with higher frequency, and together with a scale-sensitive estimate for the Riesz distributions this produces enough decay to sum the dyadic frequency pieces. We first control derivatives of the triangle averages with respect to the shape parameter.

\begin{lemma}\label{lem:shapederivatives}
Let $d\ge4$, let $f,g\in\mathcal S(\mathbb R^d)$ and $i,j\ge0$, and put $k=\max(i,j)$ and $\ell=\min(i,j)$. For every integer $m\ge0$,
\begin{equation}\label{eq:shapederivatives}
\sup_{s\in\mathbb R}\|\partial_s^m\{\chi(s)\mathcal U_{c_0+s}(f_i,g_j)\}\|_2\lesssim_m2^{m\ell-\frac{d-3}{2}k}\|f_i\|_2\|g_j\|_2,
\end{equation}
where the implicit constant depends on $d$, $m$ and the fixed cutoff $\chi$.
\end{lemma}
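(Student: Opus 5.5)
We plan to differentiate under the integral using whichever of the two parametrizations of $\mathcal I(c)$ sends the shape derivatives to the lower-frequency input. Then we recognize each resulting term as a weighted average over triangles of a fixed compact family of shapes, and apply Remark~\ref{rem:general-shapes} to it. By the Leibniz rule, $\partial_s^m\{\chi(s)\mathcal U_{c_0+s}\}$ is a combination of $\chi^{(m-n)}(s)\partial_c^n\mathcal U_c|_{c=c_0+s}$ with $0\le n\le m$. The functions $\chi^{(m-n)}$ are bounded and vanish for $|s|\ge\epsilon_0$, so it suffices to prove
$$\sup_{c\in[\frac38,\frac58]}\|\partial_c^n\mathcal U_c(f_i,g_j)\|_2\lesssim_n2^{n\ell-\frac{d-3}{2}k}\|f_i\|_2\|g_j\|_2,\qquad 0\le n\le m.$$

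\textbf{Case $j\le i$.} Here $\ell=j$, and we use \eqref{eq:shapeparam_2}, where only $g$ depends on $c$ through $z_c=cy+\sqrt{1-c^2}w$. The chain rule (Faà di Bruno) expresses $\partial_c^n[g_j(x-z_c)]$ as a finite sum of terms
$$a_{\alpha}(c,y,w)(\partial^\alpha g_j)(x-z_c),\qquad 1\le|\alpha|\le n$$
(the term $n=0$ being trivial), with coefficients built from $\partial_c^r z_c$. These coefficients are smooth and uniformly bounded with all derivatives for $c\in[\frac38,\frac58]$, since $\sqrt{1-c^2}$ stays away from zero there. We then push forward to $\mathcal I(c)$ via $w=(z-cy)/\sqrt{1-c^2}$. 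Each term becomes a weighted average $\int_{\mathcal I(c)}b(y,z)f_i(x-y)(\partial^\alpha g_j)(x-z)\,d\sigma_c$ whose weight $b$ is smooth on a neighborhood of $\bigcup_c\mathcal I(c)$, with $C^N$ norms bounded uniformly in $c$.

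The side lengths $(1,1,\sqrt{2-2c})$ range over a compact subset $K'\subset\Omega$. The measure $\sigma_c$ coincides with the invariant probability measure $\sigma_{\vec t}$ on $\mathcal I_{\vec t}$ for $\vec t=(1,1,\sqrt{2-2c})$, because both are the unique $O(d)$-invariant probability measure. Remark~\ref{rem:general-shapes}, including its clause that $\partial^\alpha g_j$ has Fourier support in the same annulus as $g_j$, therefore gives the bound $2^{-\frac{d-3}{2}k}\|f_i\|_2\|\partial^\alpha g_j\|_2$. We take the smoothing from the higher-frequency input $f_i$ via $\|f_i\|_{H^{-\frac{d-3}{2}}}\lesssim2^{-\frac{d-3}{2}i}\|f_i\|_2$. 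Bernstein's inequality \eqref{eq:Bernstein} then gives $\|\partial^\alpha g_j\|_2\lesssim2^{|\alpha|j}\|g_j\|_2\le2^{nj}\|g_j\|_2$.

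\textbf{Case $i<j$.} This is symmetric: we use the alternative parametrization with $z$ fixed and $y=cz+\sqrt{1-c^2}w$. The derivatives then fall on $f_i$, costing $2^{ni}$, and the smoothing is taken from $g_j$.

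\textbf{Main obstacle.} The main point requiring care is to check that the two parametrizations push $\nu_2$ forward to the same measure $\sigma_c$, and that the transferred weights obey the $C^N$ bounds uniformly in $c$. The first holds by $O(d)$-equivariance and uniqueness of the invariant measure. The second holds because the inverse map $(y,z)\mapsto(y,(z-cy)/\sqrt{1-c^2})$ extends smoothly to a neighborhood of $\mathcal I(c)$, uniformly for $c$ in the compact range. Once these are verified, the Leibniz sum over $n\le m$ gives \eqref{eq:shapederivatives}, using $2^{n\ell}\le2^{m\ell}$.
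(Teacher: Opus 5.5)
Your proposal is correct and follows essentially the same route as the paper. You differentiate through the fixed parametrization \eqref{eq:shapeparam_2}, or its mirror image with $z$ fixed, so that each $c$-derivative becomes a spatial derivative on the lower-frequency input with a smooth weight; you then apply Remark~\ref{rem:general-shapes} with the smoothing taken from the higher-frequency input, and finish with Bernstein.

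The differences are only cosmetic. You expand with Fa\`a di Bruno in the $(y,w)$ variables and then transfer the weights to $\mathcal I(c)$, whereas the paper runs an induction directly on $\mathcal I(c)$ using the coefficient $v(c,y,z)=\frac{y-cz}{1-c^2}$. You also split into the cases $j\le i$ and $i<j$, whereas the paper proves both bounds and takes the smaller one.
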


\begin{proof}
As $c$ ranges over $c_0+\operatorname{supp}\chi\subset(\frac38,\frac58)$, the side lengths $(1,1,\sqrt{2-2c})$ stay in a compact subset of $\Omega$, so Remark~\ref{rem:general-shapes} applies uniformly to the averages over $\mathcal I(c)$ with smooth weights. We use the parametrization \eqref{eq:shapeparam_2}. Differentiating in $c$, we find
$$\partial_cz_c=y-\frac{c}{\sqrt{1-c^2}}w=\frac{y-cz_c}{1-c^2},$$
so with $v(c,y,z)=\frac{y-cz}{1-c^2}$ and differentiation under the integral sign, which is legitimate because the domain and the measure in \eqref{eq:shapeparam_2} do not depend on $c$, we obtain
$$\begin{aligned}
\partial_c\mathcal U_c(f_i,g_j)(x)&=-\int_{V_2(\mathbb R^d)}f_i(x-y)\nabla g_j(x-z_c)\cdot\partial_cz_c d\nu_2(y,w)\\
&=-\int_{\mathcal I(c)}f_i(x-y)v(c,y,z)\cdot\nabla g_j(x-z) d\sigma_c(y,z).
\end{aligned}$$
Thus a derivative in $c$ produces a first-order derivative of $g_j$ multiplied by a smooth coefficient. To handle higher derivatives, let $a(c,y,z)$ be a smooth coefficient. By the chain rule,
$$\frac{d}{dc}a(c,y,z_c)=((\partial_c+v(c,y,z)\cdot\nabla_z)a)(c,y,z_c),$$
and therefore the derivative in $c$ of $\int_{\mathcal I(c)}a(c,y,z)f_i(x-y)\partial^\alpha g_j(x-z) d\sigma_c(y,z)$ equals
$$\begin{aligned}
&\int_{\mathcal I(c)}f_i(x-y)((\partial_c+v\cdot\nabla_z)a)(c,y,z)\partial^\alpha g_j(x-z) d\sigma_c(y,z)\\
&{}-\sum_{r=1}^d\int_{\mathcal I(c)}f_i(x-y)a(c,y,z)v_r(c,y,z)\partial^{\alpha+e_r}g_j(x-z) d\sigma_c(y,z),
\end{aligned}$$
where $e_r$ is the multi-index with a one in the $r$-th coordinate and zeros elsewhere. Each differentiation either falls on a coefficient or adds one derivative to $g_j$, so by induction $\partial_c^m\mathcal U_c(f_i,g_j)$ is a finite sum of terms of the form
\begin{equation}\label{eq:shape-derivative-terms}
\int_{\mathcal I(c)}a_{m,\alpha}(c,y,z)f_i(x-y)\partial^\alpha g_j(x-z) d\sigma_c(y,z)
\end{equation}
with $|\alpha|\le m$. The coefficients arise by differentiating $v(c,y,z)=\frac{y-cz}{1-c^2}$, so they are polynomials in $(y,z)$ whose coefficients are smooth functions of $c$ on $(-1,1)$. Since $c$ stays in a compact subinterval of $(-1,1)$, where $1-c^2$ is bounded away from zero, the coefficients $a_{m,\alpha}$ and their derivatives of any fixed order are uniformly bounded on a neighborhood of the relevant manifolds. Derivatives falling on $\chi(s)$, with $c=c_0+s$, produce terms of the same form.

We estimate each term \eqref{eq:shape-derivative-terms} by Remark~\ref{rem:general-shapes}, assigning the full Sobolev exponent $\frac{d-3}{2}$ to one of the two inputs. The resulting bound is
$$\lesssim_m\min\{\|f_i\|_{H^{-\frac{d-3}{2}}}\|\partial^\alpha g_j\|_2,\|f_i\|_2\|\partial^\alpha g_j\|_{H^{-\frac{d-3}{2}}}\}\lesssim2^{-\frac{d-3}{2}k}\|f_i\|_2\|\partial^\alpha g_j\|_2,$$
because $f_i$ and $\partial^\alpha g_j$ have their Fourier transforms at frequencies of size $2^i$ and $2^j$. By the Bernstein inequality \eqref{eq:Bernstein}, $\|\partial^\alpha g_j\|_2\lesssim_m2^{j|\alpha|}\|g_j\|_2\le2^{mj}\|g_j\|_2$ for $|\alpha|\le m$, and after summing the finitely many terms we are left with
$$\sup_{s\in\mathbb R}\|\partial_s^m\{\chi(s)\mathcal U_{c_0+s}(f_i,g_j)\}\|_2\lesssim_m2^{mj-\frac{d-3}{2}k}\|f_i\|_2\|g_j\|_2.$$
The second parametrization, which keeps $z$ fixed, moves all spatial derivatives to $f_i$, and the same argument yields this bound with $2^{mi}$ in place of $2^{mj}$. Both parametrizations represent the same operator, so we may use the smaller of the two bounds, and this is \eqref{eq:shapederivatives}.
\end{proof}

We next estimate the action of a Riesz distribution on a Banach-valued function. By Lemma~\ref{lem:shapederivatives}, the functions we have in mind satisfy bounds of the form $\sup_s\|\partial_s^mH(s)\|_X\le BL^m$ for $0\le m\le M$, with $L=2^{\min(i,j)}$. To pair such a function with $k_{-a+i\tau}$ through Lemma~\ref{lem:Riesz-continuation} we need an integer $M>a$, and a direct application of \eqref{eq:Riesz-growth} costs $M$ derivatives, hence a factor $BL^M$, which is too large for the endpoint frequency summation. The homogeneity of the Riesz distribution allows us to replace $L^M$ by $L^a$.

\begin{lemma}\label{lem:rieszscale}
Let $X$ be a Banach space, let $a>0$, and let $M>a$ be an integer. Suppose that $H\in C_c^\infty(\mathbb R;X)$ is supported in $[-1,1]$ and that for some $B>0$ and $L\ge1$
$$\sup_{s\in\mathbb R}\|\partial_s^mH(s)\|_X\le BL^m$$
for $0\le m\le M$. Then there exist a constant $C>0$ and an integer $N\ge0$, depending only on $a$ and $M$, such that
\begin{equation}\label{eq:rieszscale}
\|\langle k_{-a+i\tau},H\rangle\|_X\le C(1+|\tau|)^Ne^{\frac\pi4|\tau|}BL^a
\end{equation}
for all $\tau\in\mathbb R$.
\end{lemma}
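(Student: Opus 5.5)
The plan is to rescale $H$ to the natural scale $L^{-1}$ by means of the homogeneity relation \eqref{eq:Riesz-homogeneity} and then apply the growth bound \eqref{eq:Riesz-growth} to the rescaled function. That function has bounded derivatives of all orders up to $M$, but its support has grown to length about $L$. The naive approach fails precisely because of this enlarged support, so I will split it dyadically.

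First, pass from $X$-valued to scalar pairings. For $\Lambda\in X^*$ with $\|\Lambda\|\le1$, the function $\Lambda\circ H$ is scalar, belongs to $C_c^\infty([-1,1])$, and satisfies the same derivative bounds. Moreover, $\Lambda\langle k_\zeta,H\rangle=\langle k_\zeta,\Lambda\circ H\rangle$; for $\operatorname{Re}\zeta>0$ this holds by integration, and it extends to all $\zeta$ by analyticity, or directly from \eqref{eq:Taylor_subtraction_formula}. By Hahn--Banach it therefore suffices to treat scalar $H$.

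Next, apply \eqref{eq:Riesz-homogeneity} with $\delta=L^{-1}$ and $\zeta=-a+i\tau$. This gives $\langle k_\zeta,H\rangle=L^{a-i\tau}\langle k_\zeta,H(\cdot/L)\rangle$, so $|\langle k_\zeta,H\rangle|=L^a|\langle k_\zeta,G\rangle|$, where $G(t)=H(t/L)$. The function $G$ is supported in $[-L,L]$ and satisfies $\|G^{(m)}\|_\infty\le B$ for $0\le m\le M$.

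It remains to bound $|\langle k_\zeta,G\rangle|\lesssim(1+|\tau|)^Ne^{\frac\pi4|\tau|}B$ uniformly in $L$. Because the support has length about $L$, I cannot apply \eqref{eq:Riesz-growth} to $G$ directly. Instead, decompose $G=G\eta_0+\sum_{n\ge1}G\eta_n$, where $\eta_0$ is supported in $(-2,2)$ and $\eta_n(t)=\eta(2^{-n}t)$ for $n\ge1$, with $\eta$ supported in $\{\frac12<|t|<2\}$.
\begin{itemize}
\item The piece $G\eta_0$ lies in $C_c^\infty((-2,2))$ with $C^M$ norm at most $CB$. Lemma~\ref{lem:Riesz-continuation} with $R=2$ and $J=\{-a\}$ bounds its pairing by $C(1+|\tau|)^Ne^{\frac\pi4|\tau|}B$.
\item Each piece $G\eta_n$ with $n\ge1$ vanishes near the origin, so the pairing is the absolutely convergent integral $\Gamma(\frac\zeta2)^{-1}\int|s|^{\zeta-1}G\eta_n\,ds$.
\end{itemize}

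The main obstacle is the tail. A naive bound on the $n$-th piece is $|\Gamma(\zeta/2)|^{-1}B\,2^{-na}$, which sums in $n$ because $a>0$. That gives the total bound $\lesssim(1+|\tau|)^Ne^{\frac\pi4|\tau|}B$ independently of $L$, with only $n\lesssim\log L$ pieces nonzero. The Stirling bound for $1/\Gamma$ recorded in the proof of Lemma~\ref{lem:Riesz-continuation} handles the Gamma factor. So I do not expect integrating by parts in the tail to be necessary.

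Since $G$ has compact support, all pairings are finite sums. I should still check that the decomposition is consistent with the analytic continuation, that is, $\langle k_\zeta,G\rangle=\sum_n\langle k_\zeta,G\eta_n\rangle$ for $\operatorname{Re}\zeta<0$. This follows from linearity of the distribution $k_\zeta$ together with the fact that the sum is finite. Combining the pieces and multiplying by $L^a$ yields \eqref{eq:rieszscale}.
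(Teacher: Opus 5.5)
Your proof is correct and is essentially the paper's argument: split at scale $L^{-1}$, use the homogeneity identity \eqref{eq:Riesz-homogeneity} to turn the piece near the origin into a unit-scale function controlled by \eqref{eq:Riesz-growth}, and bound the rest by the absolutely convergent integral against $|s|^{-a-1}$, which is where $a>0$ enters. The differences are cosmetic. The paper cuts off before rescaling and treats the far part with a single cutoff, using $\int_{|s|\ge\delta}|s|^{-a-1}\,ds=\frac{2}{a}\delta^{-a}$, so your dyadic decomposition of the tail is harmless but unnecessary.
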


\begin{proof}
For $X$-valued functions the pairing with $k_\zeta$ is defined by the same formulas as in the scalar case, and for every $x^*$ in the dual space $X^*$ we have $x^*(\langle k_\zeta,H\rangle)=\langle k_\zeta,x^*\circ H\rangle$. Since the norm of an element of $X$ is the supremum of $|x^*(\cdot)|$ over the unit ball of $X^*$, the scalar estimates of Lemma~\ref{lem:Riesz-continuation} apply to $X$-valued functions with the same constants, and we use them in this form.

Set $\delta=L^{-1}$, choose $\phi\in C_c^\infty((-2,2))$ with $0\le\phi\le1$ and $\phi=1$ on $[-1,1]$, and split $H=H_0+H_1$, where $H_0(s)=\phi(\frac s\delta)H(s)$. By the homogeneity identity \eqref{eq:Riesz-homogeneity},
$$\langle k_{-a+i\tau},H_0\rangle=\delta^{-a+i\tau}\langle k_{-a+i\tau}(t),F(t)\rangle,$$
where $F(t)=\phi(t)H(\delta t)$, and $|\delta^{i\tau}|=1$. By the product and chain rules, for $0\le m\le M$,
$$\partial_t^mF(t)=\sum_{r=0}^m\binom mr\phi^{(m-r)}(t)\delta^rH^{(r)}(\delta t),$$
and the hypothesis $\sup_s\|H^{(r)}(s)\|_X\le BL^r$ together with $\delta L=1$ yields
$$\sup_t\|\partial_t^mF(t)\|_X\le B\sum_{r=0}^m\binom mr\|\phi^{(m-r)}\|_\infty(\delta L)^r\lesssim_mB.$$
Thus the derivatives of $F$ up to order $M$ are bounded independently of $L$, and $F$ is supported in the fixed interval $(-2,2)$. Applying \eqref{eq:Riesz-growth} with $J=\{-a\}$ and $R=2$, we obtain
$$\|\langle k_{-a+i\tau},H_0\rangle\|_X\le\delta^{-a}\|\langle k_{-a+i\tau},F\rangle\|_X\lesssim(1+|\tau|)^Ne^{\frac\pi4|\tau|}BL^a.$$

The function $H_1(s)=(1-\phi(\frac s\delta))H(s)$ vanishes for $|s|\le\delta$, so its pairing with $k_\zeta$ is the absolutely convergent integral $\frac{1}{\Gamma(\frac\zeta2)}\int|s|^{\zeta-1}H_1(s) ds$ for every $\zeta$, both sides being entire and equal when $\operatorname{Re}\zeta>0$. Using $\|H_1(s)\|_X\le\|H(s)\|_X\le B$, we find
$$\|\langle k_{-a+i\tau},H_1\rangle\|_X\le\left|\frac{1}{\Gamma(\frac{-a+i\tau}{2})}\right|\int_{|s|\ge\delta}|s|^{-a-1}B ds=\frac{2B}{a}\left|\frac{1}{\Gamma(\frac{-a+i\tau}{2})}\right|\delta^{-a},$$
and by Stirling's formula the reciprocal Gamma factor grows at most like $(1+|\tau|)^Ne^{\frac\pi4|\tau|}$. The function $\frac1\Gamma$ is entire, so this bound remains valid where the reciprocal Gamma factor vanishes. Adding the bounds for $H_0$ and $H_1$ and recalling that $\delta^{-a}=L^a$ proves \eqref{eq:rieszscale}.
\end{proof}

Lemmas~\ref{lem:shapederivatives} and~\ref{lem:rieszscale} combine to give the desired estimate on the left boundary.

\begin{proposition}\label{prop:edgeL2}
Let $d\ge4$. For every $0<a\le\frac{d-3}{2}$ there exist a constant $C_{a,d}>0$ and an integer $N\ge0$, depending only on $a$, $d$ and the fixed cutoff $\chi$, such that for all $f,g\in L^2(\mathbb R^d)$
$$\|\mathcal A^{-a+i\tau}(f,g)\|_2\le C_{a,d}(1+|\tau|)^Ne^{\frac\pi4|\tau|}\|f\|_2\|g\|_2$$
for every $\tau\in\mathbb R$. In particular, the endpoint $a=\frac{d-3}{2}$ is included.
\end{proposition}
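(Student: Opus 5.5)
We will decompose both inputs into Littlewood--Paley pieces and write $\mathcal A^{-a+i\tau}(f,g)=\sum_{i,j\ge0}\mathcal A^{-a+i\tau}(f_i,g_j)$. This is first done for Schwartz $f,g$, for which the pairing with $k_\zeta$ is linear and each piece makes sense. For a fixed pair $(i,j)$ we put $k=\max(i,j)$ and $\ell=\min(i,j)$, take $X=L^2(\mathbb R^d)$, and consider the $X$-valued function
$$H_{ij}(s)=\chi(s)\,\mathcal U_{c_0+s}(f_i,g_j).$$
It is smooth and supported in $(-\epsilon_0,\epsilon_0)\subset[-1,1]$. Smoothness as an $L^2$-valued map follows from the derivative formulas in the proof of Lemma~\ref{lem:shapederivatives}, since difference quotients converge in $L^2$ for Schwartz data.

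Fix an integer $M>a$. Lemma~\ref{lem:shapederivatives} gives
$$\sup_s\|\partial_s^mH_{ij}(s)\|_2\le B\,L^m \quad (0\le m\le M),$$
with $L=2^\ell$ and $B=C_M2^{-\frac{d-3}{2}k}\|f_i\|_2\|g_j\|_2$. The function $s\mapsto\mathcal A^{-a+i\tau}(f_i,g_j)(x)$ is the pointwise pairing, and it agrees with the $X$-valued pairing: apply functionals $x^*\in X^*$ and test against functions $h\in C_c^\infty$ in $x$. Lemma~\ref{lem:rieszscale} then yields
$$\|\mathcal A^{-a+i\tau}(f_i,g_j)\|_2\lesssim(1+|\tau|)^Ne^{\frac\pi4|\tau|}\,2^{a\ell-\frac{d-3}{2}k}\|f_i\|_2\|g_j\|_2\le(1+|\tau|)^Ne^{\frac\pi4|\tau|}\,2^{-a|i-j|}\|f_i\|_2\|g_j\|_2 .$$
The last inequality uses $a\le\frac{d-3}{2}$, so that $-\frac{d-3}{2}k\le-ak$.

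Next we sum over $i,j$. Writing $j=i+n$, Cauchy--Schwarz in $i$ for each fixed $n$ gives
$$\sum_i\|f_i\|_2\|g_{i+n}\|_2\le\Big(\sum\|f_i\|_2^2\Big)^{1/2}\Big(\sum\|g_j\|_2^2\Big)^{1/2}\approx\|f\|_2\|g\|_2,$$
by \eqref{eq:LP} with $s=0$. The remaining series $\sum_{n\in\mathbb Z}2^{-a|n|}$ converges because $a>0$, and its value depends only on $a$. This proves the bound for Schwartz functions, with $C_{a,d}$ depending on $a$, $d$ and $\chi$, and $N$ the integer from Lemma~\ref{lem:rieszscale}. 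The endpoint $a=\frac{d-3}{2}$ is included, since $d\ge4$ gives $a\ge\frac12>0$.

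Two technical points remain. The first is to justify interchanging the pairing with the double sum, i.e.\ to show $\mathcal A^\zeta(f,g)=\sum_{i,j}\mathcal A^\zeta(f_i,g_j)$ for Schwartz $f,g$. We would handle this by noting that $\sum_{i\le n,\,j\le n}H_{ij}\to\chi(s)\,\mathcal U_{c_0+s}(f,g)$ together with its first $M$ $s$-derivatives, locally uniformly in $x$. This holds because $\sum_{i\le n}f_i=\varphi_{2^{-n}}*f\to f$ in $\mathcal S$, and the continuity estimate \eqref{eq:Riesz-growth} then lets us pass to the limit pointwise in $x$. Fatou's lemma then transfers the $L^2$ bound to $\mathcal A^{-a+i\tau}(f,g)$. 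The second point is extending from Schwartz functions to general $f,g\in L^2$, which is done by density through the bilinear estimate, exactly as at the end of the proof of Theorem~\ref{thm:main_Sobolev_fractal_dilation}. The main obstacle is the frequency bookkeeping: the derivatives must cost only $L^a$, not $L^M$, so that the decay $2^{-a|i-j|}$ survives. That step is already supplied by Lemma~\ref{lem:rieszscale}, so here the work reduces to verifying its hypotheses uniformly in $(i,j)$.
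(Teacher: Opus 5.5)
Your proposal is correct and follows the paper's proof essentially step for step. You use Littlewood--Paley pieces, apply Lemma~\ref{lem:shapederivatives} to verify the hypotheses of Lemma~\ref{lem:rieszscale} with $L=2^{\min(i,j)}$, sum over the frequency indices, and extend by density. The only difference is cosmetic: you bound $2^{a\ell-\frac{d-3}{2}k}$ by $2^{-a|i-j|}$, which is summable since $a>0$, whereas the paper uses the sharper $2^{-\frac{d-3}{2}|i-j|}$, summable since $d\ge4$; the two coincide at the endpoint $a=\frac{d-3}{2}$.
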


\begin{proof}
Let $f,g\in\mathcal S(\mathbb R^d)$. We first treat the frequency-localized inputs $f_i$ and $g_j$. Choose an integer $M>\frac{d-3}{2}$, put $H(s)=\chi(s)\mathcal U_{c_0+s}(f_i,g_j)$ and $L=2^{\min(i,j)}$, and let
$$B=C_M2^{-\frac{d-3}{2}\max(i,j)}\|f_i\|_2\|g_j\|_2.$$
We regard $H$ as a function with values in $X=L^2(\mathbb R^d)$, which is legitimate because $f_i$ and $g_j$ are smooth with rapidly decaying derivatives, so that $H\in C_c^\infty(\mathbb R;L^2(\mathbb R^d))$. With a suitable constant $C_M$, Lemma~\ref{lem:shapederivatives} ensures that $\sup_s\|\partial_s^mH(s)\|_2\le BL^m$ for $0\le m\le M$, so the hypotheses of Lemma~\ref{lem:rieszscale} are satisfied. Since $\mathcal A^{-a+i\tau}(f_i,g_j)=\langle k_{-a+i\tau},H\rangle$, that lemma yields
\begin{equation}\label{eq:edgefrequency}
\|\mathcal A^{-a+i\tau}(f_i,g_j)\|_2\le C_{a,d}(\tau)2^{a\min(i,j)-\frac{d-3}{2}\max(i,j)}\|f_i\|_2\|g_j\|_2,
\end{equation}
where $C_{a,d}(\tau)\le C_{a,d}(1+|\tau|)^Ne^{\frac\pi4|\tau|}$.

We next sum over the frequency indices. Put $u_i=\|f_i\|_2$ and $v_j=\|g_j\|_2$ for $i,j\ge0$, and extend both sequences by zero to negative indices. For Schwartz inputs the series $\sum_{i,j}\mathcal A^{-a+i\tau}(f_i,g_j)$ converges absolutely to $\mathcal A^{-a+i\tau}(f,g)$, because \eqref{eq:Riesz-growth} bounds $|\mathcal A^{-a+i\tau}(f_i,g_j)(x)|$ by finitely many derivatives of $f_i$ and $g_j$, which decay rapidly in $i$ and $j$. Bilinearity, the triangle inequality and \eqref{eq:edgefrequency} therefore lead to
$$\|\mathcal A^{-a+i\tau}(f,g)\|_2\le\sum_{i,j}\|\mathcal A^{-a+i\tau}(f_i,g_j)\|_2\le C_{a,d}(\tau)\sum_{i,j}2^{a\min(i,j)-\frac{d-3}{2}\max(i,j)}u_iv_j.$$
Write $k=\max(i,j)$ and $\ell=\min(i,j)$. Since $k=\ell+|i-j|$ and $a\le\frac{d-3}{2}$,
$$2^{a\ell-\frac{d-3}{2}k}=2^{-\frac{d-3}{2}|i-j|}2^{-(\frac{d-3}{2}-a)\ell}\le2^{-\frac{d-3}{2}|i-j|},$$
and hence
\begin{equation}\label{eq:lefbound_ij}
\|\mathcal A^{-a+i\tau}(f,g)\|_2\le C_{a,d}(\tau)\sum_{i,j}2^{-\frac{d-3}{2}|i-j|}u_iv_j.
\end{equation}
This is where the hypothesis $d\ge4$ enters, since it makes the sequence $w_n=2^{-\frac{d-3}{2}|n|}$, $n\in\mathbb Z$, summable, with
$$\|w\|_{\ell^1(\mathbb Z)}=1+2\sum_{n=1}^\infty2^{-\frac{d-3}{2}n}=\frac{1+2^{-\frac{d-3}{2}}}{1-2^{-\frac{d-3}{2}}}<\infty.$$
By the Cauchy--Schwarz inequality and the discrete Young inequality,
$$\sum_{i,j\ge0}2^{-\frac{d-3}{2}|i-j|}u_iv_j=\sum_{i\ge0}u_i(w*v)_i\le\|u\|_{\ell^2}\|w*v\|_{\ell^2}\le\|w\|_{\ell^1}\|u\|_{\ell^2}\|v\|_{\ell^2}\lesssim\|f\|_2\|g\|_2,$$
where the last step uses Plancherel's theorem and the bounded overlap of the Littlewood--Paley Fourier supports. Together with \eqref{eq:lefbound_ij} this proves the estimate for Schwartz inputs, and it extends to $L^2$ inputs by density.
\end{proof}

\subsection{The right boundary estimate}

On the line $\operatorname{Re}\zeta=1$ the Riesz kernel $k_\zeta$ is a bounded function, and the family $\mathcal A^\zeta$ becomes an honest average over triangles of all shapes near the equilateral one. Such an average is dominated by the product of two spherical averages, which is the content of the next lemma.

\begin{lemma}\label{lem:right-boundary}
Let $d\ge4$. There is a constant $C_d>0$, depending only on $d$ and the fixed cutoff $\chi$, such that
$$\|\mathcal A^{1+i\tau}(f,g)\|_{\frac{d+1}{2}}\le C_de^{C_d|\tau|}\|f\|_{\frac{d+1}{d}}\|g\|_{\frac{d+1}{d}}$$
for all $f,g\in\mathcal S(\mathbb R^d)$ and all $\tau\in\mathbb R$.
\end{lemma}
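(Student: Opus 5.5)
On the line $\operatorname{Re}\zeta=1$ we have $k_{1+i\tau}(s)=\frac{|s|^{i\tau}}{\Gamma(\frac{1+i\tau}{2})}$, a bounded function of modulus $|\Gamma(\frac{1+i\tau}{2})|^{-1}$. The pairing is therefore an honest integral:
$$\mathcal A^{1+i\tau}(f,g)(x)=\frac{1}{\Gamma(\frac{1+i\tau}{2})}\int_{\mathbb R}|s|^{i\tau}\chi(s)\,\mathcal U_{c_0+s}(f,g)(x)\,ds .$$
By Stirling's formula, $|\Gamma(\frac{1+i\tau}{2})|^{-1}\lesssim e^{\frac\pi4|\tau|}$, which is absorbed into $C_de^{C_d|\tau|}$. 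Bringing absolute values inside gives the pointwise bound
$$|\mathcal A^{1+i\tau}(f,g)(x)|\lesssim e^{\frac\pi4|\tau|}\int_{|s|<\epsilon_0}\mathcal U_{c_0+s}(|f|,|g|)(x)\,ds .$$
It therefore suffices to treat nonnegative $f,g$ and the positive operator $\int\chi_0(c-c_0)\,\mathcal U_c(f,g)\,dc$, where $\chi_0$ is the indicator of $(-\epsilon_0,\epsilon_0)$.

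The key step is to recognize this shape-averaged operator as dominated by a product of spherical averages:
$$\int_{c_0-\epsilon_0}^{c_0+\epsilon_0}\mathcal U_c(f,g)(x)\,dc\lesssim A_1f(x)\,A_1g(x).$$
To see this, we disintegrate the product measure $\sigma_{S^{d-1}}\otimes\sigma_{S^{d-1}}$ on $S^{d-1}\times S^{d-1}$ along the function $(y,z)\mapsto y\cdot z$. The level sets are exactly $\mathcal I(c)$, and they are $O(d)$-orbits, so by uniqueness of invariant measures the conditional measures are the $\sigma_c$. The pushforward of $\sigma\otimes\sigma$ under $y\cdot z$ is the density $\kappa_d(1-c^2)^{\frac{d-3}{2}}\,dc$, obtained by fixing $y$ and computing the distribution of $z_1$. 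Consequently
$$\int_{-1}^1\mathcal U_c(f,g)(x)\,\kappa_d(1-c^2)^{\frac{d-3}{2}}\,dc=A_1f(x)\,A_1g(x).$$
On $(\frac38,\frac58)$ the weight $(1-c^2)^{\frac{d-3}{2}}$ is bounded below, and the integrand is nonnegative for $f,g\ge0$, so the restricted integral is controlled by the full one. This gives the displayed domination.

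Finally, Hölder's inequality with $\frac{2}{d+1}=\frac{1}{d+1}+\frac{1}{d+1}$ and the Strichartz--Littman estimate $A_1:L^{\frac{d+1}{d}}\to L^{d+1}$ yield
$$\|A_1f\,A_1g\|_{\frac{d+1}{2}}\le\|A_1f\|_{d+1}\|A_1g\|_{d+1}\lesssim\|f\|_{\frac{d+1}{d}}\|g\|_{\frac{d+1}{d}}.$$

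The only point requiring care is the disintegration identity, namely that the conditional measures are exactly the normalized $\sigma_c$ with the stated density. Invariance and transitivity settle the first part, and the density is the classical formula, so I expect no real obstacle. The constant then depends only on $d$ and $\chi$ (through $\epsilon_0$), as required.
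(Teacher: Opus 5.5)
Your proposal is correct and follows the paper's proof: disintegrate $\sigma_{S^{d-1}}\otimes\sigma_{S^{d-1}}$ along $y\cdot z$ with density proportional to $(1-c^2)^{\frac{d-3}{2}}$, dominate $|\mathcal A^{1+i\tau}(f,g)|$ by $|\Gamma(\frac{1+i\tau}{2})|^{-1}A_1|f|\,A_1|g|$ using that this density is bounded below on $c_0+\operatorname{supp}\chi$, and conclude with H\"older, the Strichartz estimate and Stirling. The only difference is the order of steps: the paper first rewrites $\mathcal A^\zeta$ as an integral over $S^{d-1}\times S^{d-1}$ against the kernel $K_\zeta$, which is bounded on $\operatorname{Re}\zeta=1$, whereas you take absolute values first and then use positivity to pass from the restricted shape integral to the full one.
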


\begin{proof}
Since $k_\zeta$ is locally integrable when $\operatorname{Re}\zeta>0$, the change of variables $c=c_0+s$ turns the definition of $\mathcal A^\zeta$ into
$$\mathcal A^\zeta(f,g)(x)=\int_{-1}^1\chi(c-c_0)k_\zeta(c-c_0)\mathcal U_c(f,g)(x) dc,$$
so $\mathcal A^\zeta$ is a weighted integral of triangle averages over the shape parameter. To compare it with spherical averages, we apply the coarea formula to the map $F(y,z)=y\cdot z$ from $S^{d-1}\times S^{d-1}$ to $[-1,1]$. For $c\in(-1,1)$ the level set $F^{-1}(c)$ is $\mathcal I(c)$, the coarea Jacobian depends only on $c$, and by rotational invariance the normalized measure on each level set is $\sigma_c$. Consequently,
\begin{equation}\label{eq:angular-disintegration}
\int_{S^{d-1}\times S^{d-1}}G(y,z) d\sigma_{S^{d-1}}(y)d\sigma_{S^{d-1}}(z)=\int_{-1}^1\omega_d(c)\int_{\mathcal I(c)}G(y,z) d\sigma_c(y,z)dc
\end{equation}
for every integrable function $G$, where
$$\omega_d(c)=\frac{\Gamma(\frac d2)}{\sqrt\pi\Gamma(\frac{d-1}{2})}(1-c^2)^{\frac{d-3}{2}}$$
is the density of the inner product of two independent uniformly distributed unit vectors, a smooth positive function on $(-1,1)$. For $\operatorname{Re}\zeta>0$ we apply \eqref{eq:angular-disintegration} to $G(y,z)=f(x-y)g(x-z)K_\zeta(y,z)$, where
$$K_\zeta(y,z)=\frac{\chi(y\cdot z-c_0)k_\zeta(y\cdot z-c_0)}{\omega_d(y\cdot z)},$$
which is possible because $\omega_d^{-1}$ is bounded on the compact set $c_0+\operatorname{supp}\chi\subset(-1,1)$. The result is
$$\mathcal A^\zeta(f,g)(x)=\int_{S^{d-1}\times S^{d-1}}f(x-y)g(x-z)K_\zeta(y,z) d\sigma_{S^{d-1}}(y)d\sigma_{S^{d-1}}(z).$$
On the line $\zeta=1+i\tau$ we have $|k_{1+i\tau}(s)|=\frac{1}{|\Gamma(\frac{1+i\tau}{2})|}$ for $s\ne0$, so $|K_{1+i\tau}(y,z)|\le C(\tau)$, where $C(\tau)$ is a constant multiple of $\frac{1}{|\Gamma(\frac{1+i\tau}{2})|}$, and
$$\begin{aligned}
|\mathcal A^{1+i\tau}(f,g)(x)|&\le C(\tau)\int_{S^{d-1}\times S^{d-1}}|f(x-y)||g(x-z)| d\sigma_{S^{d-1}}(y)d\sigma_{S^{d-1}}(z)\\
&=C(\tau)A_1|f|(x)A_1|g|(x).
\end{aligned}$$
The classical spherical $L^p$-improving estimate of Strichartz \cite{Strichartz1970} states that $\|A_1h\|_{d+1}\lesssim\|h\|_{\frac{d+1}{d}}$, so H\"older's inequality yields
$$\|\mathcal A^{1+i\tau}(f,g)\|_{\frac{d+1}{2}}\le C(\tau)\|A_1|f|\|_{d+1}\|A_1|g|\|_{d+1}\lesssim C(\tau)\|f\|_{\frac{d+1}{d}}\|g\|_{\frac{d+1}{d}}.$$
By Stirling's formula, $C(\tau)$ grows at most exponentially in $|\tau|$.
\end{proof}

\subsection{Interpolation of positive averages and the comparison region}\label{sec:regions}

To combine estimates we use the following elementary interpolation lemma for positive averages. It remains valid when the output exponent is less than one, which makes it suitable for the quasi-Banach estimates in $\mathcal B_d$ and in $\mathcal P_{d,\beta}$.

\begin{lemma}\label{lem:positive-interpolation}
Let $\mathcal B$ be a positive bilinear average, that is, an operator of the form
$$\mathcal B(f,g)(x)=\int_{\mathbb R^{2d}}f(x-y)g(x-z) d\mu(y,z)$$
for a finite positive Borel measure $\mu$, or the supremum of a countable family of such averages. Suppose that $\mathcal B$ satisfies strong-type bounds at the reciprocal triples $(\frac{1}{p_j},\frac{1}{q_j},\frac{1}{r_j})$, $j=0,1$, where $1\le p_j,q_j\le\infty$ and $0<r_j\le\infty$. Then $\mathcal B$ satisfies a strong-type bound at every point of the segment joining these triples.
\end{lemma}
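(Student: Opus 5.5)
We would prove the lemma by a direct Hölder-type argument, the way Riesz--Thorin is proved for positive operators. This avoids complex interpolation entirely, so a quasi-Banach target causes no difficulty. Fix $0<\theta<1$ and let $(\frac1p,\frac1q,\frac1r)=(1-\theta)(\frac1{p_0},\frac1{q_0},\frac1{r_0})+\theta(\frac1{p_1},\frac1{q_1},\frac1{r_1})$. Since $\mu\ge0$, we have $|\mathcal B(f,g)|\le\mathcal B(|f|,|g|)$ pointwise, and the same holds for a supremum of such averages. It therefore suffices to treat $f,g\ge0$. By homogeneity we may also assume $\|f\|_p=\|g\|_q=1$.

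\emph{Factorization of the inputs.} When $p<\infty$, put $F_0=f^{p/p_0}$ and $F_1=f^{p/p_1}$, with the convention $f^0=\mathbf 1_{\{f>0\}}$ when $p_j=\infty$. The identity $\frac{1-\theta}{p_0}+\frac{\theta}{p_1}=\frac1p$ gives $f=F_0^{1-\theta}F_1^{\theta}$. Moreover $\|F_j\|_{p_j}=\|f\|_p^{p/p_j}=1$, and this also holds when $p_j=\infty$, where $\|F_j\|_\infty\le1$. When $p=\infty$, necessarily $p_0=p_1=\infty$, and we take $F_0=F_1=f$. We define $G_0,G_1$ from $g$ in the same way, with the exponents $q,q_0,q_1$.

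\emph{Pointwise Hölder on the measure.} For a single average, apply Hölder's inequality with respect to $\mu$, with exponents $\frac1{1-\theta}$ and $\frac1\theta$, to the integrand
$$\bigl(F_0(x-y)G_0(x-z)\bigr)^{1-\theta}\bigl(F_1(x-y)G_1(x-z)\bigr)^{\theta}.$$
This gives
$$\mathcal B(f,g)(x)\le\mathcal B(F_0,G_0)(x)^{1-\theta}\,\mathcal B(F_1,G_1)(x)^{\theta}.$$
For a supremum $\mathcal B=\sup_k\mathcal B_k$ of countably many positive averages, apply this inequality to each $\mathcal B_k$ and bound each factor on the right by the corresponding supremum. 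Taking the supremum over $k$ then yields the same inequality for $\mathcal B$; countability guarantees measurability.

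\emph{Hölder in $x$.} Since $\frac1r=\frac{1-\theta}{r_0}+\frac{\theta}{r_1}$, Hölder's inequality in $x$ gives
$$\|\mathcal B(f,g)\|_r\le\|\mathcal B(F_0,G_0)\|_{r_0}^{1-\theta}\,\|\mathcal B(F_1,G_1)\|_{r_1}^{\theta}.$$
Hölder holds for all $0<r_0,r_1\le\infty$: apply the usual inequality to $|h|^{r}$ with the exponents $\frac{r_0}{(1-\theta)r}$ and $\frac{r_1}{\theta r}$, whose reciprocals sum to one. The assumed endpoint bounds now give $\|\mathcal B(F_j,G_j)\|_{r_j}\le C_j\|F_j\|_{p_j}\|G_j\|_{q_j}\le C_j$. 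Hence $\|\mathcal B(f,g)\|_r\le C_0^{1-\theta}C_1^\theta$, which is the claimed strong-type bound.

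The only delicate points are bookkeeping rather than analysis. The main one is the handling of infinite exponents, where $f^{p/p_j}$ must be interpreted as the indicator of the support of $f$, or where we take $F_j=f$ when $p=\infty$. The other is the observation that Hölder's inequality, and hence the argument, is insensitive to whether $r_j<1$. We would expect the statement for countable suprema to require the most care, which is why the Hölder step is carried out for each $\mathcal B_k$ separately before taking the supremum.
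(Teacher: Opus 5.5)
Your proposal is correct and follows essentially the same argument as the paper. You factor $f=F_0^{1-\theta}F_1^{\theta}$ and $g=G_0^{1-\theta}G_1^{\theta}$ (using indicators of supports for zeroth powers), apply H\"older with respect to $\mu$ pointwise so that the bound survives the countable supremum, and then apply H\"older in $x$ with exponents $\frac{r_0}{(1-\theta)r}$ and $\frac{r_1}{\theta r}$, which remains valid for quasi-Banach targets. The only differences are cosmetic: you normalize $\|f\|_p=\|g\|_q=1$ instead of tracking the powers $\|f\|_p^{p/p_j}$, and you treat general nonnegative $f,g$ directly rather than first restricting to bounded, finitely supported functions and concluding by monotone convergence.
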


\begin{proof}
Let $0<\lambda<1$, and define $p$, $q$ and $r$ by $\frac1p=\frac{1-\lambda}{p_0}+\frac{\lambda}{p_1}$, $\frac1q=\frac{1-\lambda}{q_0}+\frac{\lambda}{q_1}$ and $\frac1r=\frac{1-\lambda}{r_0}+\frac{\lambda}{r_1}$. Consider first bounded nonnegative functions $f$ and $g$ supported on sets of finite measure, and assume that $p,q<\infty$. Put $F_j=f^{\frac{p}{p_j}}$ and $G_j=g^{\frac{q}{q_j}}$, where a zeroth power is interpreted as the indicator function of the support. Then $f=F_0^{1-\lambda}F_1^\lambda$ and $g=G_0^{1-\lambda}G_1^\lambda$, and H\"older's inequality with exponents $\frac{1}{1-\lambda}$ and $\frac1\lambda$ with respect to $\mu$ shows that
$$\mathcal B(f,g)\le\mathcal B(F_0,G_0)^{1-\lambda}\mathcal B(F_1,G_1)^\lambda$$
pointwise, an inequality that survives taking suprema over a family of averages. Next we apply H\"older's inequality in the spatial variable with exponents $\frac{r_0}{r(1-\lambda)}$ and $\frac{r_1}{r\lambda}$. These exponents are at least one even when $r<1$, because $\frac{r(1-\lambda)}{r_0}+\frac{r\lambda}{r_1}=1$, and since $\|F_j\|_{p_j}=\|f\|_p^{\frac{p}{p_j}}$ and $\|G_j\|_{q_j}=\|g\|_q^{\frac{q}{q_j}}$, we obtain
$$\|\mathcal B(f,g)\|_r\le\|\mathcal B(F_0,G_0)\|_{r_0}^{1-\lambda}\|\mathcal B(F_1,G_1)\|_{r_1}^\lambda\lesssim\|f\|_p^{\frac{(1-\lambda)p}{p_0}+\frac{\lambda p}{p_1}}\|g\|_q^{\frac{(1-\lambda)q}{q_0}+\frac{\lambda q}{q_1}}=\|f\|_p\|g\|_q.$$
If $p=\infty$, then $p_0=p_1=\infty$ and we take $F_0=F_1=f$, and similarly when $q=\infty$, while infinite values of $r_0$, $r_1$ or $r$ are handled with the usual conventions. The general case follows from the inequality $|\mathcal B(f,g)|\le\mathcal B(|f|,|g|)$ and monotone convergence.
\end{proof}

The second tool is the symmetry of the equilateral triangle, which we now verify.

\begin{lemma}\label{lem:symmetry}
The trilinear form
$$\Lambda(f_1,f_2,f_3)=\int_{\mathbb R^d}\mathcal T_1(f_1,f_2)(x)f_3(x) dx$$
is invariant under all permutations of $f_1$, $f_2$ and $f_3$ for $f_1,f_2,f_3\in\mathcal S(\mathbb R^d)$.
\end{lemma}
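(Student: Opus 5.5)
The plan is to write $\Lambda$ as an integral over the space of all equilateral triangles in $\mathbb R^d$ with respect to a measure that visibly treats the three vertices alike. By definition,
$$\Lambda(f_1,f_2,f_3)=\int_{\mathbb R^d}\int_{\mathcal I}f_1(x-y)f_2(x-z)f_3(x)\,d\sigma(y,z)\,dx,$$
and Fubini applies since all three functions are Schwartz and $\sigma$ is a probability measure. We first change variables $(x,y,z)\mapsto(x^1,x^2,x^3)=(x-y,x-z,x)$. This exhibits $\Lambda$ as the integral of $f_1\otimes f_2\otimes f_3$ against the pushforward measure $\mu$ of $dx\,d\sigma(y,z)$ on the set $Z=\{|x^i-x^j|=1\ \text{for } i\ne j\}$. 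It therefore suffices to show that $\mu$ is invariant under the maps $P_\pi(x^1,x^2,x^3)=(x^{\pi(1)},x^{\pi(2)},x^{\pi(3)})$. Since the transpositions $(12)$ and $(13)$ generate $S_3$, we only need to check these two.

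\emph{The transposition $(12)$.} Swapping $x^1$ and $x^2$ amounts to the map $(y,z)\mapsto(z,y)$ on $\mathcal I$ with $x$ fixed. This map preserves $\mathcal I$ and commutes with the $O(d)$-action. Hence it pushes $\sigma$ forward to an $O(d)$-invariant probability measure on $\mathcal I$, which equals $\sigma$ by the uniqueness noted in the introduction. This case is immediate.

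\emph{The transposition $(13)$.} Swapping $x^1=x-y$ and $x^3=x$ means introducing the new base point $x'=x-y$, with $y'=-y$ and $z'=z-y$. Then $x'-y'=x$ and $x'-z'=x-z$, so indeed $(x'-y',x'-z',x')=(x^3,x^2,x^1)$. We proceed in three steps:
\begin{itemize}
\item The map $\Phi(y,z)=(-y,z-y)$ sends $\mathcal I$ to itself, because $|-y|=1$, $|z-y|=1$ and $|(-y)-(z-y)|=|z|=1$.
\item $\Phi$ is a bijection, being a linear involution up to the check $\Phi^2=\mathrm{id}$, and it commutes with every $R\in O(d)$. By uniqueness, $\Phi_*\sigma=\sigma$.
\item The full change of variables $(x,y,z)\mapsto(x-y,\Phi(y,z))$ is a skew product. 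For each fixed $(y,z)$ it is a translation in $x$, so Lebesgue measure is preserved fiberwise. By Fubini, $dx\,d\sigma$ is preserved.
\end{itemize}
Consequently $\Lambda(f_1,f_2,f_3)=\Lambda(f_3,f_2,f_1)$. Combined with the $(12)$ case, this gives invariance under all of $S_3$.

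The only point needing care is the uniqueness of the $O(d)$-invariant probability measure on $\mathcal I$, which the paper already records; that $\mathcal I$ is a single orbit follows from transitivity on pairs of unit vectors with inner product $\tfrac12$. I expect no real obstacle beyond verifying that $\Phi$ is $O(d)$-equivariant and bijective.
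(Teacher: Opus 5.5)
Your proposal is correct and follows the paper's proof essentially verbatim. You reduce to the transpositions $(12)$ and $(13)$. The first is handled by the swap $(y,z)\mapsto(z,y)$. The second uses the substitution $x'=x-y$ together with the $O(d)$-equivariant bijection (indeed involution) $(y,z)\mapsto(-y,z-y)$ of $\mathcal I$. Both maps preserve $\sigma$ by uniqueness of the invariant probability measure.
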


\begin{proof}
The transpositions of the first two arguments and of the first and third arguments generate the symmetric group on three letters, so it suffices to check these two. The map $(y,z)\mapsto(z,y)$ is a bijection of $\mathcal I$ that commutes with rotations, so it preserves the invariant probability measure $\sigma$, and it exchanges $f_1$ and $f_2$. For the second transposition, we substitute $x'=x-y$ in
$$\Lambda(f_1,f_2,f_3)=\int_{\mathbb R^d}\int_{\mathcal I}f_1(x-y)f_2(x-z)f_3(x) d\sigma(y,z)dx$$
and set $(Y,Z)=(-y,z-y)$, so that $x-z=x'-Z$ and $x=x'-Y$. The map $(y,z)\mapsto(Y,Z)$ carries $\mathcal I$ to itself, because $|Y|=|y|=1$, $|Z|=|z-y|=1$ and $|Y-Z|=|z|=1$. It is a bijection, with inverse $(Y,Z)\mapsto(-Y,Z-Y)$, and it commutes with rotations, so it preserves $\sigma$ as well. Therefore
$$\Lambda(f_1,f_2,f_3)=\int_{\mathbb R^d}\int_{\mathcal I}f_1(x')f_2(x'-Z)f_3(x'-Y) d\sigma(Y,Z)dx'=\Lambda(f_3,f_2,f_1).$$
\end{proof}

With these two lemmas we can justify the claim made in the introduction that $\mathcal T_1$ is bounded at every point of $\widehat{\mathcal B}_d$. The seven points generating $\mathcal B_d$ correspond to the bounds listed there, namely the elementary estimates, the spherical estimate and the estimates of \cite{IosevichPalssonSovine2022}. The points of $\mathcal G_d$ come from \eqref{eq:CLM-product}. Indeed, if $A_1$ maps $L^{\frac{1}{a_1}}$ to $L^{\frac{1}{b_1}}$, then $(A_1|f|^{\frac1\vartheta})^\vartheta$ belongs to $L^{\frac{1}{\vartheta b_1}}$ with norm at most a constant multiple of $\|f\|_{\frac{1}{\vartheta a_1}}$, and H\"older's inequality, which remains valid for exponents below one, takes care of the product. For a point of $\mathcal G_d$ whose third coordinate is at most one, duality converts the bound into a bound for $\Lambda$, Lemma~\ref{lem:symmetry} permutes the three exponents, and duality in the third variable converts the result back, so $\mathcal T_1$ is bounded at every point of $\mathcal C_d$. Every point of the convex hull of finitely many points can be reached by successive interpolation along segments, and Lemma~\ref{lem:positive-interpolation} therefore extends boundedness to all of $\widehat{\mathcal B}_d$.

Let
$$\mathcal O_d=\{(U_d,V_d,1-U_d),(V_d,U_d,1-U_d),(U_d,U_d,1-V_d)\}$$
be the set of the three points added to $\widehat{\mathcal B}_d$ in the definition of $\mathcal Q_d$, so that $\mathcal Q_d=\operatorname{conv}(\widehat{\mathcal B}_d\cup\mathcal O_d)$. We now show that each of these points lies outside the comparison region, so that Theorem~\ref{thm:asymmetric} enlarges the known range of exponents in every dimension $d\ge4$.

\begin{theorem}\label{thm:fixed-region}
For every $d\ge4$, every point of $\mathcal O_d$ lies outside $\widehat{\mathcal B}_d$. In particular, $\widehat{\mathcal B}_d\subsetneq\mathcal Q_d$.
\end{theorem}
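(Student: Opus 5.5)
Since $\widehat{\mathcal B}_d$ is the convex hull of finitely many explicit points, the plan is to find, for each $d\ge4$, a linear functional $\ell$ on $\mathbb R^3$ with $\ell\le c$ on every generator of $\widehat{\mathcal B}_d$ and $\ell>c$ at each point of $\mathcal O_d$. The introduction already suggests the choice $\ell(x,y,z)=x+y-z$, with the claimed bound $1+x+y-z\le\frac{2d}{d+1}$, that is $x+y-z\le\frac{d-1}{d+1}$. This functional is symmetric in $x,y$. In trilinear coordinates $(x,y,w)=(x,y,1-z)$ it becomes $x+y+w-1$, which is symmetric in all three coordinates. Because of this, the permuted points in $\mathcal C_d$ satisfy the same bound as the points they come from, and each of the three points of $\mathcal O_d$ has the same value $\ell=2U_d+V_d-1$.

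I would carry this out in three steps. First, evaluate $\ell$ on the seven generators of $\mathcal B_d$. The values are $0$, $0$, $0$, $\alpha-b=\frac{d-1}{d+1}$ (twice), $2\alpha-1=\frac{d-1}{d+1}$ and $0$, so all are at most $\frac{d-1}{d+1}$. Second, evaluate $\ell$ on $\mathcal G_d$: it equals $\vartheta[(a_1-b_1)+(a_2-b_2)]$ with $(a_i,b_i)\in\{(0,0),(1,1),(\alpha,b)\}$, so its maximum is $2\vartheta\frac{d-1}{d+1}$. Third, check the permuted points of $\mathcal C_d$, which need no extra work by the symmetry noted above. These steps show $\ell\le\max\{1,2\vartheta\}\frac{d-1}{d+1}$ on $\widehat{\mathcal B}_d$.

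For $d=4,5$ we have $\vartheta=\frac12$, so the bound is $\frac{d-1}{d+1}$. On the other side,
$$2U_d+V_d-1=\frac{2(d-1)^2+d^2-3d+4-(d^2-1)}{d^2-1}=\frac{2d^2-7d+7}{d^2-1}.$$
Comparing with $\frac{(d-1)^2}{d^2-1}$, the difference is $\frac{d^2-5d+6}{d^2-1}=\frac{(d-2)(d-3)}{d^2-1}>0$ for $d\ge4$, which settles those two dimensions.

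\textbf{Main obstacle.} For $d\ge6$ we have $\vartheta=\frac{m-1}{m}>\frac12$, and the product bound for $\mathcal G_d$ can reach $2\vartheta\frac{d-1}{d+1}$, which exceeds the value at $\mathcal O_d$ when $\vartheta$ is near $1$. So $x+y-z$ fails as a separating functional in general. Instead I would look for a weighted functional $\ell_\lambda=x+y-\lambda z$ and choose $\lambda$ so that it separates.

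Here the large-$\vartheta$ points of $\mathcal G_d$ have large $z$: for example $(\vartheta\alpha,\vartheta\alpha,2\vartheta b)$ and $(\vartheta,\vartheta,2\vartheta)$. Meanwhile the point $(U_d,V_d,1-U_d)$ has $z=\frac{2}{d+1}$. So I would first try $\ell=x+y$ restricted to a hyperplane section, or $\ell=x+y-\lambda z$ with small $\lambda$.

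Using $\lambda$-weights, the generators give $\max\{2-\lambda,\ 2\alpha-\lambda,\ \alpha-\lambda b,\ 2\vartheta\alpha-2\lambda\vartheta b,\ 2\vartheta(1-\lambda)\}$, together with the permuted $\mathcal C_d$ points (which must now be checked individually, since symmetry is lost). These are to be compared with $U_d+V_d-\lambda\frac{2}{d+1}$ and with the two other points of $\mathcal O_d$.

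I expect this case analysis to be the main obstacle: verifying that a single $\lambda$ (or a point-dependent $\lambda$) works uniformly in $d\ge6$ and $m=\lfloor d/2\rfloor$. For each point of $\mathcal O_d$ separately it suffices to exhibit some separating functional, since the claim is only that each point lies outside. I would reduce the verification to a finite list of rational inequalities in $d$ and $\vartheta$, and check them with $\vartheta\le\frac{d-2}{d}$.
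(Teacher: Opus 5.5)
\textbf{Gap: the proof is left incomplete for $d\ge6$.} Your first three steps are exactly the paper's argument. The paper uses $S=1+x+y-z$, the sum of the trilinear coordinates. It bounds $S$ by $1+2\vartheta\frac{d-1}{d+1}$ on all of $\widehat{\mathcal B}_d$, using $\vartheta\ge\frac12$ to absorb the generators of $\mathcal B_d$, and symmetry plus convexity for $\mathcal C_d$ and the hull. It then checks that the common value of $S$ at the three points of $\mathcal O_d$ is strictly larger. The trouble is your ``main obstacle'': the claim that $x+y-z$ stops separating for $d\ge6$ is false. You then abandon it for an unfinished search over $x+y-\lambda z$, so as written you give no argument for $d\ge6$. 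The error is treating $\vartheta$ as if it could be close to $1$ independently of $d$. In fact $\vartheta=\frac{m-1}{m}$ with $m=\lfloor d/2\rfloor$, so $\vartheta\le\frac{d-2}{d}$, the very bound you mention at the end. Thus $\vartheta\to1$ only at the rate $1-\vartheta\ge\frac2d$. The value at $\mathcal O_d$ is $2-\frac7d+O(d^{-2})$, while the product bound is at most $2-\frac8d+O(d^{-2})$.

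\textbf{The missing computation.} Using $\vartheta\le\frac{d-2}{d}$,
\[
\frac{2d^2-7d+7}{d^2-1}-\frac{2(d-2)(d-1)}{d(d+1)}=\frac{d^2-3d+4}{d(d^2-1)}>0 .
\]
This is the paper's gap for even $d$, where $\vartheta=\frac{d-2}{d}$ exactly. For odd $d$ one has $\vartheta=\frac{d-3}{d-1}$, and the gap is $\frac{2d^2-7d+7}{d^2-1}-\frac{2(d-3)}{d+1}=\frac{1}{d-1}$. Hence the unweighted symmetric functional separates $\mathcal O_d$ from $\widehat{\mathcal B}_d$ in every dimension $d\ge4$. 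No case analysis in $\lambda$ is needed, and the permuted points of $\mathcal C_d$ are handled by symmetry exactly as in your third step. The strict inclusion $\widehat{\mathcal B}_d\subsetneq\mathcal Q_d$ then follows because $\mathcal O_d\subset\mathcal Q_d$.
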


\begin{proof}
Consider the affine functional $S(x,y,z)=1+x+y-z$, which is the sum of the trilinear coordinates of the point $(x,y,z)$. At each point of $\mathcal O_d$,
$$S=S_d=2U_d+V_d=\frac{3d^2-7d+6}{d^2-1}.$$
At the seven points generating $\mathcal B_d$, the functional takes the values $1$, $1$, $1$, $1+\alpha-b$, $1+\alpha-b$, $2\alpha$ and $1$, so $S\le\frac{2d}{d+1}$ on $\mathcal B_d$. At the points of $\mathcal G_d$,
$$S=1+\vartheta((a_1-b_1)+(a_2-b_2))\le1+2\vartheta\frac{d-1}{d+1},$$
because $a_i-b_i\le\alpha-b=\frac{d-1}{d+1}$ for $(a_i,b_i)\in\mathcal V_d$. Since $\vartheta\ge\frac12$ and $\frac{2d}{d+1}=1+\frac{d-1}{d+1}$, the latter bound also holds on $\mathcal B_d$. It is preserved under permutations of the trilinear coordinates, which do not change their sum, and under convex combinations, so it holds throughout $\widehat{\mathcal B}_d$. On the other hand,
$$S_d-\left(1+2\vartheta\frac{d-1}{d+1}\right)=\begin{cases}
\frac{d^2-3d+4}{d(d^2-1)}&\text{if }d\text{ is even},\\
\frac{1}{d-1}&\text{if }d\text{ is odd},
\end{cases}$$
and both quantities are positive. Hence every point of $\mathcal O_d$ lies outside $\widehat{\mathcal B}_d$, and since $\widehat{\mathcal B}_d\subset\mathcal Q_d$, the inclusion is strict.
\end{proof}

\subsection{Proofs of Theorem~\ref{thm:asymmetric} and Corollary~\ref{cor:symmetric-fixed}}\label{sec:proof-fixed}

We now interpolate between the two boundary estimates and use the symmetry of the triangle. Figure~\ref{fig:strip} shows the strip on which the interpolation takes place.

\begin{figure}[htbp]
\centering
\begin{tikzpicture}[scale=0.95]
\fill[gray!12] (-3.2,-1.9) rectangle (1.6,1.9);
\draw[->] (-4.4,0) -- (2.8,0) node[right] {$\operatorname{Re}\zeta$};
\draw[->] (0,-2.25) -- (0,2.35) node[above] {$\operatorname{Im}\zeta$};
\draw[very thick,blue!60!black] (-3.2,-1.9) -- (-3.2,1.9);
\draw[very thick,red!70!black] (1.6,-1.9) -- (1.6,1.9);
\fill (0,0) circle (2.2pt);
\node[anchor=south east] at (-0.08,0.1) {$\mathcal A^0=\mathcal T_1$};
\node[anchor=north] at (-3.2,-1.95) {$-\frac{d-3}{2}$};
\node[anchor=north] at (1.6,-1.95) {$1$};
\draw[<->,gray!60!black] (-3.2,-0.75) -- (0,-0.75);
\node[anchor=north,gray!60!black] at (-1.6,-0.78) {$\frac{d-3}{2}$};
\draw[<->,gray!60!black] (0,-0.75) -- (1.6,-0.75);
\node[anchor=north,gray!60!black] at (0.8,-0.78) {$1$};
\node[blue!60!black,align=right,anchor=east] at (-3.35,1.0) {$L^2\times L^2\to L^2$\\Proposition~\ref{prop:edgeL2}};
\node[red!70!black,align=left,anchor=west] at (1.75,1.0) {$L^{\frac{d+1}{d}}\times L^{\frac{d+1}{d}}\to L^{\frac{d+1}{2}}$\\Lemma~\ref{lem:right-boundary}};
\end{tikzpicture}
\caption{The interpolation strip $-\frac{d-3}{2}\le\operatorname{Re}\zeta\le1$, drawn to scale for $d=7$. On the left edge the family $\mathcal A^\zeta$ maps $L^2\times L^2$ to $L^2$, and on the right edge it is dominated by a product of spherical averages. The point $\zeta=0$, where $\mathcal A^0=\mathcal T_1$, lies at distance $\frac{d-3}{2}$ from the left edge and $1$ from the right edge, so it receives the weight $\lambda=\frac{2}{d-1}$ from the left edge and $1-\lambda$ from the right edge. This produces the bound $L^{\frac{d+1}{d-1}}\times L^{\frac{d+1}{d-1}}\to L^{\frac{d^2-1}{3d-5}}$ at $\zeta=0$, which the symmetry of the triangle converts into \eqref{eq:asymmetric}.}
\label{fig:strip}
\end{figure}

\begin{proof}[Proof of Theorem~\ref{thm:asymmetric}]
Recall that $\mathcal A^0=\mathcal T_1$. By Proposition~\ref{prop:edgeL2} with $a=\frac{d-3}{2}$ and Lemma~\ref{lem:right-boundary}, the family $\mathcal A^\zeta$ satisfies
$$\begin{aligned}
\mathcal A^{-\frac{d-3}{2}+i\tau}&:L^2\times L^2\longrightarrow L^2,\\
\mathcal A^{1+i\tau}&:L^{\frac{d+1}{d}}\times L^{\frac{d+1}{d}}\longrightarrow L^{\frac{d+1}{2}}
\end{aligned}$$
on the two edges of the strip $-\frac{d-3}{2}\le\operatorname{Re}\zeta\le1$, with constants of at most exponential growth in $|\tau|$. The point $\zeta=0$ is the convex combination
$$0=\lambda\left(-\frac{d-3}{2}\right)+(1-\lambda)\cdot1$$
with $\lambda=\frac{2}{d-1}$, so the interpolation weights at $\zeta=0$ are $\lambda$ on the left edge and $1-\lambda$ on the right edge.

We apply the bilinear analytic interpolation theorem \cite[Theorem~7.2.9]{GrafakosModern2014} to $\mathcal A^\zeta$ on this strip. That theorem is formulated for simple functions, while $\mathcal A^\zeta$ is defined on Schwartz functions, and we bridge the gap by regularization. Let $\kappa_\epsilon$ be a nonnegative, compactly supported smooth approximate identity with $\int_{\mathbb R^d}\kappa_\epsilon(x)\,dx=1$. For simple functions $f$ and $g$ with bounded support, the functions $f*\kappa_\epsilon$ and $g*\kappa_\epsilon$ belong to $C_c^\infty(\mathbb R^d)$, so $\mathcal A^\zeta_\epsilon(f,g)=\mathcal A^\zeta(f*\kappa_\epsilon,g*\kappa_\epsilon)$ is defined. By \eqref{eq:Riesz-growth}, for simple functions $f$, $g$ and $h$ with bounded support the function $\zeta\mapsto\int\mathcal A^\zeta_\epsilon(f,g)h dx$ is analytic in the open strip, continuous up to its boundary and of at most exponential growth in $|\operatorname{Im}\zeta|$, which more than suffices for the admissibility hypotheses of the theorem. Since convolution with $\kappa_\epsilon$ does not increase $L^p$ norms, the two boundary estimates hold for $\mathcal A^\zeta_\epsilon$ with constants independent of $\epsilon$. At $\zeta=0$ the interpolated exponents are
$$\begin{aligned}
(1-\lambda)\frac{d}{d+1}+\frac\lambda2&=\frac{d-1}{d+1}=U_d,\\
(1-\lambda)\frac{2}{d+1}+\frac\lambda2&=\frac{3d-5}{d^2-1}=1-V_d,
\end{aligned}$$
and we obtain $\|\mathcal T_1(f*\kappa_\epsilon,g*\kappa_\epsilon)\|_{\frac{1}{1-V_d}}\lesssim\|f\|_{\frac{1}{U_d}}\|g\|_{\frac{1}{U_d}}$ uniformly in $\epsilon$. Minkowski's inequality shows that $\|\mathcal T_1(F,G)\|_1\le\min\{\|F\|_1\|G\|_\infty,\|F\|_\infty\|G\|_1\}$, so $\mathcal T_1(f*\kappa_\epsilon,g*\kappa_\epsilon)\to\mathcal T_1(f,g)$ in $L^1$ as $\epsilon\to0$. Fatou's lemma along an almost everywhere convergent subsequence, followed by a density argument, now yields
\begin{equation}\label{eq:symmetric-edge}
\|\mathcal T_1(f,g)\|_{\frac{1}{1-V_d}}\lesssim\|f\|_{\frac{1}{U_d}}\|g\|_{\frac{1}{U_d}}
\end{equation}
for all $f,g\in L^{\frac{1}{U_d}}(\mathbb R^d)$.

To obtain the asymmetric estimate we pass to the trilinear form $\Lambda$ of Lemma~\ref{lem:symmetry}. By duality, \eqref{eq:symmetric-edge} says that
\begin{equation}\label{eq:trilinear-edge}
|\Lambda(f_1,f_2,f_3)|\lesssim\|f_1\|_{\frac{1}{U_d}}\|f_2\|_{\frac{1}{U_d}}\|f_3\|_{\frac{1}{V_d}},
\end{equation}
and since $\Lambda$ is symmetric, exchanging the roles of $f_2$ and $f_3$ leads to
$$|\Lambda(f_1,f_2,f_3)|\lesssim\|f_1\|_{\frac{1}{U_d}}\|f_2\|_{\frac{1}{V_d}}\|f_3\|_{\frac{1}{U_d}}.$$
Duality in the third variable, together with $1-U_d=\frac{2}{d+1}$, now yields
$$\|\mathcal T_1(f,g)\|_{\frac{d+1}{2}}\lesssim\|f\|_{\frac{d+1}{d-1}}\|g\|_{\frac{d^2-1}{d^2-3d+4}},$$
which is \eqref{eq:asymmetric}.

Finally, \eqref{eq:asymmetric}, the estimate obtained from it by exchanging $f$ and $g$, and \eqref{eq:symmetric-edge} are the bounds at the three points of $\mathcal O_d$. We showed in Subsection~\ref{sec:regions} that $\mathcal T_1$ is bounded at every point of $\widehat{\mathcal B}_d$, and Lemma~\ref{lem:positive-interpolation} extends boundedness to the convex hull $\mathcal Q_d$ of $\widehat{\mathcal B}_d\cup\mathcal O_d$.
\end{proof}

\begin{proof}[Proof of Corollary~\ref{cor:symmetric-fixed}]
The three points of $\mathcal O_d$ have trilinear coordinates obtained by permuting $(U_d,U_d,V_d)$, and $\mathcal T_1$ is bounded at each of them by Theorem~\ref{thm:asymmetric}. Interpolating them with equal weights by Lemma~\ref{lem:positive-interpolation} produces the point with trilinear coordinates $(c_d,c_d,c_d)$, where
$$c_d=\frac{2U_d+V_d}{3}=\frac{3d^2-7d+6}{3(d^2-1)},$$
and this point is the first estimate of the corollary. The elementary bounds at $(1,0,1)$, $(0,1,1)$ and $(0,0,0)$ have trilinear coordinates $(1,0,0)$, $(0,1,0)$ and $(0,0,1)$, and their equally weighted interpolation is the trilinear point $(\frac13,\frac13,\frac13)$, which is the bound $L^3\times L^3\to L^{\frac32}$. Since
$$c_d-\frac{d}{d+3}=\frac{2(d-3)^2}{3(d^2-1)(d+3)}>0$$
and $\frac{d}{d+3}>\frac13$, the trilinear point $(\frac{d}{d+3},\frac{d}{d+3},\frac{d}{d+3})$ lies on the segment joining $(\frac13,\frac13,\frac13)$ and $(c_d,c_d,c_d)$. Interpolating once more and returning to the operator coordinates proves the second estimate.
\end{proof}

We close this section with two comments on Corollary~\ref{cor:symmetric-fixed}. Its first estimate also lies outside $\widehat{\mathcal B}_d$ in every dimension $d\ge4$, since its trilinear coordinates have sum $3c_d=2U_d+V_d=S_d$, so the separating functional of Theorem~\ref{thm:fixed-region} applies to it as well. Its second estimate has simpler exponents, but it already follows from the comparison region when $d$ is even and $d\ge10$, or when $d$ is odd and $d\ge15$. Indeed, the point $(\vartheta\alpha,\vartheta\alpha,2\vartheta b)$ of $\mathcal G_d$ has trilinear coordinates $(\vartheta\alpha,\vartheta\alpha,1-2\vartheta b)$, and the average of its three permutations is the trilinear point $(b_d,b_d,b_d)$ with
$$3b_d=1+2\vartheta\frac{d-1}{d+1}.$$
In these dimensions $b_d\ge\frac{d}{d+3}$, and interpolation with $(\frac13,\frac13,\frac13)$ yields the second estimate without Theorem~\ref{thm:asymmetric}.

\section{\texorpdfstring{Localized maximal $L^p$-improving estimates}{Localized maximal Lp-improving estimates}}\label{sec:asymmax}

In this section we derive localized maximal estimates for $\mathcal T_E$, the maximal operator associated with a set of radii $E\subset[1,2]$, by combining fixed-radius $L^p$-improving estimates with Sobolev smoothing estimates. For each pair of frequency scales we first interpolate between these two kinds of estimates, and then we control the supremum over $E$ by the covering argument of Lemma~\ref{lem:covering}.

\subsection{From fixed-radius estimates to maximal estimates}

We begin with a transfer theorem based on the radius-covering argument of \cite{BorgesFosterOu2025}. It converts a single fixed-radius bound into a family of maximal bounds, at the price of moving the exponents towards $(\frac12,\frac12,\frac12)$ far enough for the Sobolev gain to absorb the covering number of $E$.

\begin{theorem}\label{thm:transfer}
Let $d\ge4$, and suppose that there are exponents $1<p_0,q_0<\infty$ and $1\le r_0<\infty$ and a constant $C_0>0$ such that
\begin{equation}\label{eq:transfer-input}
\|\mathcal T_1(f,g)\|_{r_0}\le C_0\|f\|_{p_0}\|g\|_{q_0}.
\end{equation}
For $0<\theta\le1$ define $p$, $q$ and $r$ by
\begin{equation}\label{eq:transfer-exponents}
\begin{aligned}
\frac1p&=\frac\theta2+\frac{1-\theta}{p_0},\\
\frac1q&=\frac\theta2+\frac{1-\theta}{q_0},\\
\frac1r&=\frac\theta2+\frac{1-\theta}{r_0}.
\end{aligned}
\end{equation}
Let $E\subset[1,2]$ be nonempty with $\overline{\dim}_{\mathrm M}E=\beta$. If $\theta>\frac{2\beta}{(d-3)r}$, then
\begin{equation}\label{eq:transfer-output}
\|\mathcal T_E(f,g)\|_r\lesssim\|f\|_p\|g\|_q.
\end{equation}
\end{theorem}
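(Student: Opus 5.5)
Decompose $f=\sum_i f_i$, $g=\sum_j g_j$ and use \eqref{eq:TE_decomposition}, so that $\mathcal T_E(f,g)\le\sum_{i,j}\mathcal T_E^{i,j}(f,g)$. For each pair $(i,j)$ put $k=\max(i,j)$ and $\delta=2^{-k-2}$, and apply Lemma~\ref{lem:covering} with exponent $r$ to $F(t,x)=\mathcal T_t(f_i,g_j)(x)$. This reduces matters to bounding
$$\sup_{t\in[\frac12,\frac52]}\bigl(\|\mathcal T_t(f_i,g_j)\|_r+2^{-k}\|\partial_t\mathcal T_t(f_i,g_j)\|_r\bigr)$$
by a quantity $B_{ij}$. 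The resulting estimate is $\|\mathcal T_E^{i,j}\|_r\lesssim N(E,2^{-k})^{1/r}B_{ij}\lesssim 2^{(\beta+\varepsilon)k/r}B_{ij}$. The target is
$$B_{ij}\lesssim 2^{-\theta\frac{d-3}{2}k}\|f_i\|_p\|g_j\|_q,$$
because then the net factor $2^{-(\theta\frac{d-3}{2}-\frac{\beta+\varepsilon}{r})k}$ decays exactly when $\theta>\frac{2\beta}{(d-3)r}$.

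To get $B_{ij}$ I interpolate between two bounds for the frequency-localized bilinear operators $(f,g)\mapsto\mathcal T_t(P_if,P_jg)$ and $(f,g)\mapsto 2^{-k}\partial_t\mathcal T_t(P_if,P_jg)$.

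\emph{$L^2$ endpoint.} By \eqref{eq:derivative-single}, both operators map $L^2\times L^2\to L^2$ with norm $\lesssim 2^{-\frac{d-3}{2}k}$, using $\|P_if\|_2\le\|f\|_2$.

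\emph{Fixed-radius endpoint.} Rescaling \eqref{eq:transfer-input} gives $\mathcal T_t:L^{p_0}\times L^{q_0}\to L^{r_0}$ uniformly for $t\in[\frac12,\frac52]$. Since $1<p_0,q_0<\infty$, $P_i$ is bounded on $L^{p_0}$ and $L^{q_0}$ uniformly in $i$, so the undifferentiated operator has norm $O(1)$.

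For the derivative, use \eqref{eq:radius-derivatives} with $t_1=t_2=t$. This writes $\partial_t\mathcal T_t(f_i,g_j)$ as a sum of weighted averages $\mathcal T_t^{y_\ell}(\partial_\ell f_i,g_j)$ and $\mathcal T_t^{z_\ell}(f_i,\partial_\ell g_j)$. Here lies the main obstacle: the fixed-radius bound is stated only for the unweighted $\mathcal T_1$, whereas these terms carry the signed weights $y_\ell$, $z_\ell$. I would try the following, in this order.
\begin{enumerate}
\item Avoid weights entirely. Write $\partial_t\mathcal T_t(f_i,g_j)=t^{-1}\mathcal T_t\bigl((x\cdot\nabla)\text{-type expressions}\bigr)$ minus $x\cdot\nabla$ of the output, using the dilation identity $\mathcal T_t(f,g)(x)=\mathcal T_1(f(t\cdot),g(t\cdot))(x/t)$. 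Differentiating in $t$ gives $t^{-1}\bigl[\mathcal T_t(\widetilde f,g_j)+\mathcal T_t(f_i,\widetilde g)-x\cdot\nabla\mathcal T_t(f_i,g_j)\bigr]$ with $\widetilde f=x\cdot\nabla f_i$. The factor $x$ is unbounded, but it can be localized: work with $f,g$ supported in unit cubes and use the finite propagation of $\mathcal T_t$, which spreads supports by at most $4$. Then $\ell^r$-sum over cubes, which requires $r\ge p_0,q_0$ to combine the pieces, or else the smaller output size.
\item Failing that, dominate the weighted averages by positive averages, $|\mathcal T_t^{y_\ell}(F,G)|\le\mathcal T_t(|F|,|G|)$. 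This gives the bound $\lesssim\|\partial_\ell f_i\|_{p_0}\|g_j\|_{q_0}$, and then Bernstein in $L^{p_0}$ yields $\|\partial_\ell f_i\|_{p_0}\lesssim 2^i\|f\|_{p_0}$.
\end{enumerate}
Route (b) is clean and is the one I commit to. With the $2^{-k}$ normalization the derivative operator is then also $O(1)$ from $L^{p_0}\times L^{q_0}\to L^{r_0}$.

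\emph{Interpolation and summation.} Bilinear complex (or real) interpolation with parameter $\theta$ gives both frequency-localized operators norm $\lesssim 2^{-\theta\frac{d-3}{2}k}$ from $L^p\times L^q\to L^r$. This yields $B_{ij}$ as claimed, with $\|f\|_p$ in place of $\|f_i\|_p$. Summing over $i,j$ requires care, since $\sum_{i,j}2^{-\eta\max(i,j)}$ converges for $\eta>0$. The bound is $\lesssim\sum_{i,j}2^{-\eta\max(i,j)}\|f\|_p\|g\|_q<\infty$, with $\eta=\theta\frac{d-3}{2}-\frac{\beta+\varepsilon}{r}>0$, where I choose $\varepsilon$ small. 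Since $r\ge1$, the triangle inequality in $L^r$ applies. Finally, extend from Schwartz functions by density, as in the proof of Theorem~\ref{thm:main_Sobolev_fractal_dilation}.
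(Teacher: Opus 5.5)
Your proposal is correct and follows the paper's proof essentially step for step. The paper also interpolates, by multilinear Riesz--Thorin, the operators $\mathcal T_t(P_if,P_jg)$ and $2^{-k}\partial_t\mathcal T_t(P_if,P_jg)$ between the $L^2$ bound of Lemma~\ref{lem:scale-derivative} and the rescaled fixed-radius bound. It handles the weights $y_\ell,z_\ell$ exactly as in your route (b), via $|\mathcal T^a_t(F,G)|\le\|a\|_\infty\mathcal T_t(|F|,|G|)$ and Bernstein in $L^{p_0}$ and $L^{q_0}$, then applies Lemma~\ref{lem:covering} with $\delta=2^{-k-2}$ and sums $\sum_{i,j}2^{-\delta_*\max(i,j)}$.

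The only difference is the final extension to general inputs. Rather than extending by continuity, the paper takes the supremum over a countable dense subset of $E$ and applies Fatou's lemma along an a.e.\ convergent subsequence.
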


\begin{proof}
The hypothesis on $\theta$ says that $\theta\frac{d-3}{2}>\frac\beta r$. Since $r_0\ge1$, we have $\frac1r\le\frac\theta2+(1-\theta)\le1$, so $r\ge1$. We first extend the fixed-radius estimate to all radii in $[\frac12,\frac52]$ and to derivatives in the radius. For $t>0$ let $f_t(x)=f(tx)$ and $g_t(x)=g(tx)$. Since $\mathcal T_t(f,g)(tx)=\mathcal T_1(f_t,g_t)(x)$, the hypothesis \eqref{eq:transfer-input} implies
\begin{equation}\label{eq:radius-scaling}
\|\mathcal T_t(f,g)\|_{r_0}=t^{\frac{d}{r_0}}\|\mathcal T_1(f_t,g_t)\|_{r_0}\le C_0t^{\frac{d}{r_0}}\|f_t\|_{p_0}\|g_t\|_{q_0}=C_0t^{d(\frac{1}{r_0}-\frac{1}{p_0}-\frac{1}{q_0})}\|f\|_{p_0}\|g\|_{q_0},
\end{equation}
which is uniform for $t$ in compact subsets of $(0,\infty)$. We also need fixed-radius estimates for weighted operators. For a bounded measurable function $a$ on $\mathcal I$ let
$$\mathcal T_t^a(f,g)(x)=\int_{\mathcal I}a(y,z)f(x-ty)g(x-tz) d\sigma(y,z).$$
Since $\sigma$ is a positive measure, $|\mathcal T_t^a(f,g)|\le\|a\|_\infty\mathcal T_t(|f|,|g|)$, so \eqref{eq:radius-scaling} also holds for $\mathcal T^a_t$, with an additional factor $\|a\|_\infty$. This applies in particular to the weights $y_\ell$ and $z_\ell$ that arise when $\mathcal T_t$ is differentiated in $t$.

For $i,j\ge0$ let
$$B_t^{i,j}(f,g)=\mathcal T_t(P_if,P_jg),$$
where $P_if=f_i$ is the $i$-th Littlewood--Paley piece of $f$, and let $k=\max(i,j)$. The uniform estimate \eqref{eq:radius-scaling} and the uniform $L^p$ boundedness of the Littlewood--Paley operators show that $\|B_t^{i,j}(f,g)\|_{r_0}\lesssim\|f\|_{p_0}\|g\|_{q_0}$. On the other hand, the derivative formula \eqref{eq:radius-derivatives} with $t_1=t_2=t$, the weighted fixed-radius estimates and the Bernstein inequality in $L^{p_0}$ and $L^{q_0}$ lead to
$$\begin{aligned}
\|\partial_tB_t^{i,j}(f,g)\|_{r_0}&\lesssim\|\nabla P_if\|_{p_0}\|P_jg\|_{q_0}+\|P_if\|_{p_0}\|\nabla P_jg\|_{q_0}\\
&\lesssim(2^i+2^j)\|f\|_{p_0}\|g\|_{q_0}\lesssim2^k\|f\|_{p_0}\|g\|_{q_0}.
\end{aligned}$$
Together these bounds amount to
\begin{equation}\label{eq:dyadic-improving-boundary}
\|B_t^{i,j}(f,g)\|_{r_0}+2^{-k}\|\partial_tB_t^{i,j}(f,g)\|_{r_0}\lesssim\|f\|_{p_0}\|g\|_{q_0},
\end{equation}
while Lemma~\ref{lem:scale-derivative} and the inequality $\|P_if\|_2\lesssim\|f\|_2$ yield
\begin{equation}\label{eq:dyadic-smoothing-boundary}
\|B_t^{i,j}(f,g)\|_2+2^{-k}\|\partial_tB_t^{i,j}(f,g)\|_2\lesssim2^{-\frac{d-3}{2}k}\|f\|_2\|g\|_2.
\end{equation}
Both bounds hold uniformly for $t\in[\frac12,\frac52]$. Interpolating the bilinear operators $B_t^{i,j}$ and $2^{-k}\partial_tB_t^{i,j}$ separately between them, by the multilinear Riesz--Thorin theorem, we obtain
\begin{equation}\label{eq:dyadic-interpolated}
\|B_t^{i,j}(f,g)\|_r+2^{-k}\|\partial_tB_t^{i,j}(f,g)\|_r\lesssim2^{-\theta\frac{d-3}{2}k}\|f\|_p\|g\|_q
\end{equation}
uniformly for $t\in[\frac12,\frac52]$, with $p$, $q$ and $r$ given by \eqref{eq:transfer-exponents}.

We now apply Lemma~\ref{lem:covering} to $F(t,x)=B_t^{i,j}(f,g)(x)$ with $\delta=2^{-k-2}$. Since $\delta\le2^{-k}$, the estimate \eqref{eq:dyadic-interpolated} shows that the hypothesis \eqref{eq:bound_F_derivative_F} holds with a constant comparable to $2^{-\theta\frac{d-3}{2}k}\|f\|_p\|g\|_q$, and since $N(E,2^{-k-2})\le4N(E,2^{-k})$, the lemma yields
$$\left\|\sup_{t\in E}|B_t^{i,j}(f,g)|\right\|_r\lesssim N(E,2^{-k})^{\frac1r}2^{-\theta\frac{d-3}{2}k}\|f\|_p\|g\|_q.$$
By \eqref{eq:minkowski-cover}, for every $\varepsilon>0$,
\begin{equation}\label{eq:asymmax-dyadic-decay}
\left\|\sup_{t\in E}|B_t^{i,j}(f,g)|\right\|_r\lesssim_{E,\varepsilon}2^{-k(\theta\frac{d-3}{2}-\frac{\beta+\varepsilon}{r})}\|f\|_p\|g\|_q.
\end{equation}
Since $\theta\frac{d-3}{2}>\frac\beta r$, we may choose $\varepsilon>0$ so small that
$$\delta_*=\theta\frac{d-3}{2}-\frac{\beta+\varepsilon}{r}>0.$$
For Schwartz functions $f$ and $g$, the Littlewood--Paley decomposition, the triangle inequality and \eqref{eq:asymmax-dyadic-decay} imply
$$\|\mathcal T_E(f,g)\|_r\lesssim\|f\|_p\|g\|_q\sum_{i,j\ge0}2^{-\delta_*\max(i,j)}=\|f\|_p\|g\|_q\sum_{k\ge0}(2k+1)2^{-\delta_*k}\lesssim\|f\|_p\|g\|_q,$$
since there are $2k+1$ pairs $(i,j)$ with $\max(i,j)=k$, and the series converges because $\delta_*>0$.

Finally, we extend the estimate to arbitrary $f\in L^p$ and $g\in L^q$. For general inputs we define the maximal operator by taking the supremum over a countable dense subset $E'$ of $E$, which does not change $\mathcal T_E$ on Schwartz functions. The case $E=\{t\}$ of the estimate just proved shows that each $\mathcal T_t$ is bounded from $L^p\times L^q$ to $L^r$. Hence, if $f_n\to f$ in $L^p$ and $g_n\to g$ in $L^q$ are Schwartz approximations, a diagonal argument provides a subsequence along which $\mathcal T_t(f_n,g_n)\to\mathcal T_t(f,g)$ almost everywhere for every $t\in E'$. Along this subsequence $\mathcal T_{E'}(f,g)\le\liminf_n\mathcal T_{E'}(f_n,g_n)$ pointwise, and Fatou's lemma proves \eqref{eq:transfer-output}.
\end{proof}

We are now ready to prove Theorem~\ref{thm:mainasymmax}.

\begin{proof}[Proof of Theorem~\ref{thm:mainasymmax}]
Fix $\theta$ with $\theta_0<\theta\le1$, and apply Theorem~\ref{thm:transfer} with the fixed-radius exponents of Theorem~\ref{thm:asymmetric}, that is, with $\frac{1}{p_0}=U_d$, $\frac{1}{q_0}=V_d$ and $\frac{1}{r_0}=\frac{2}{d+1}$. These exponents satisfy $1<p_0,q_0<\infty$ and $2\le r_0=\frac{d+1}{2}<\infty$, and the resulting interpolated exponents are those of \eqref{eq:asymmaxexponents}. The condition of Theorem~\ref{thm:transfer} reads
$$\theta\frac{d-3}{2}>\beta\left(\frac\theta2+(1-\theta)\frac{2}{d+1}\right).$$
Multiplying by $2(d+1)$ and rearranging, we see that it is equivalent to
$$\theta((d+1)(d-3-\beta)+4\beta)>4\beta,$$
and since $(d+1)(d-3-\beta)+4\beta=(d-3)(d+1-\beta)$, the latter inequality reads $\theta>\theta_0$. The condition $d>3+\beta$ ensures that $(d-3)(d+1-\beta)>4(d-3)>4\beta$, so $\theta_0<1$, and Theorem~\ref{thm:transfer} yields \eqref{eq:mainasymmax} for every $\theta_0<\theta\le1$. Since $\sigma$ is invariant under the exchange of $y$ and $z$, the same estimate holds with $p_\theta$ and $q_\theta$ interchanged. Also, since $\sigma$ is a probability measure, $\|\mathcal T_E(f,g)\|_\infty\le\|f\|_\infty\|g\|_\infty$. Every point of $\mathcal R_{d,\beta}$ is a convex combination of finitely many of the points at which we now have bounds, and Lemma~\ref{lem:positive-interpolation}, applied to the supremum over a countable dense subset of $E$, extends boundedness to all of $\mathcal R_{d,\beta}$.
\end{proof}

\begin{remark}
The condition in Theorem~\ref{thm:transfer} is strict, since equality is not covered by the dyadic summation argument. When $\overline{\dim}_{\mathrm M}E=0$, the theorem applies for every $\theta>0$ but does not give the fixed-radius endpoint $\theta=0$. If $E$ is finite, however, the scaling estimate \eqref{eq:radius-scaling} directly shows that
$$\|\mathcal T_E(f,g)\|_{r_0}\lesssim(\#E)^{\frac{1}{r_0}}\|f\|_{p_0}\|g\|_{q_0},$$
where $\#E$ is the number of elements of $E$. For $d=4$ and $E=[1,2]$ the condition $\theta\frac{d-3}{2}>\frac{\beta}{r_\theta}$ fails for every $0<\theta\le1$, so the present argument produces no maximal estimates along \eqref{eq:asymmaxexponents} in that case, although this does not mean that such estimates fail.
\end{remark}

\subsection{The localized maximal boundedness region}\label{sec:combined}

Throughout this subsection $E\subset[1,2]$ is nonempty, $\beta=\overline{\dim}_{\mathrm M}E$ and $d>3+\beta$. We verify that $\mathcal T_E$ is bounded at every point of the four families introduced before Theorem~\ref{thm:combined-localized}, which proves that theorem, and then we compare $\mathcal C_{d,\beta}$ with the region obtained from the Sobolev and asymmetric estimates alone.

The Sobolev family is immediate. Corollary~\ref{thm:lp-improving} supplies bounds at the points $(x,y,\frac12)$ with $x,y\ge\frac12$ and $x+y<1+\frac{d-3-\beta}{2d}=\frac32-\frac{3+\beta}{2d}$, and these are the points of $\mathcal E_{d,\beta}$.

The transferred family is equally direct. Theorem~\ref{thm:asymmetric} shows that $\mathcal T_1$ is bounded at every point $(x_0,y_0,z_0)\in\mathcal Q_d$. The restrictions $0<x_0,y_0<1$ and $0<z_0\le 1$ in the definition of $\mathcal J^{\mathrm{all}}_{d,\beta}$ are the hypotheses $1<p_0,q_0<\infty$ and $1\le r_0<\infty$ of Theorem~\ref{thm:transfer}, and \eqref{eq:full-transfer-region} is its condition $\theta\frac{d-3}{2}>\frac\beta r$, so Theorem~\ref{thm:transfer} applies at every point of $\mathcal J^{\mathrm{all}}_{d,\beta}$. The three points of $\mathcal O_d$ are admissible here, because
$$V_d-\frac12=\frac{(d-3)^2}{2(d^2-1)}>0,$$
$U_d-V_d=\frac{d-3}{d^2-1}>0$ and $U_d<1$, so both input reciprocal exponents lie in $(\frac12,1)$ and the output reciprocal exponent lies in $(0,\frac12)$. For comparison, let $\mathcal J_{d,\beta}$ be the set of points $\theta(\frac12,\frac12,\frac12)+(1-\theta)(x_0,y_0,z_0)$ with $(x_0,y_0,z_0)\in\mathcal O_d$, $0<\theta\le1$ and $\theta\frac{d-3}{2}>\beta(\frac\theta2+(1-\theta)z_0)$, so that $\mathcal J_{d,\beta}\subset\mathcal J^{\mathrm{all}}_{d,\beta}$, and let
$$\mathcal M_{d,\beta}=\operatorname{conv}(\{(0,0,0)\}\cup\mathcal E_{d,\beta}\cup\mathcal J_{d,\beta})$$
be the region obtained from the Sobolev estimate and the three asymmetric points alone. It contains the region $\mathcal R_{d,\beta}$ of Theorem~\ref{thm:mainasymmax}.

For the remaining two families we need the spherical maximal bounds
\begin{equation}\label{eq:spherical-boundedness-combined}
\|M_E^{\mathrm{sph}}h\|_{L^{\frac1b}}\lesssim\|h\|_{L^{\frac1a}}
\end{equation}
for $(a,b)\in\mathscr S_{d,\beta}$, with the usual $L^\infty$ convention when $a=b=0$. Theorem~1 of Anderson, Hughes, Roos and Seeger \cite{AndersonHughesRoosSeeger2021} establishes this estimate in the interior of $\operatorname{conv}\{S_1,S_2,S_3,S_4\}$ and on the diagonal segment $\{(a,a):0\le a<\frac{d-1}{d-1+\beta}\}$. Their theorem is stated in terms of the upper Minkowski dimension $\beta$ and the Assouad dimension $\dim_{\mathrm A}E$, and since $\dim_{\mathrm A}E\le1$, their region for the pair $(\beta,\dim_{\mathrm A}E)$ contains their region for the pair $(\beta,1)$, which is the quadrilateral with vertices $S_1,\ldots,S_4$. The strong type endpoint theorem of Lee and Zhao \cite[Theorem~1.1]{LeeZhao2026} asserts that
$$\|M_{[1,2]}^{\mathrm{sph}}h\|_{L^{\frac{d^2+1}{d-1}}}\lesssim\|h\|_{L^{\frac{d^2+1}{d(d-1)}}},$$
and since $M_E^{\mathrm{sph}}h\le M_{[1,2]}^{\mathrm{sph}}h$ for $E\subset[1,2]$, the estimate at $S_4$ holds for every such $E$, regardless of $\beta$. The one-variable version of Lemma~\ref{lem:positive-interpolation}, whose proof applies verbatim to suprema of positive linear averages, now yields \eqref{eq:spherical-boundedness-combined} throughout $\mathscr S_{d,\beta}$. In particular, $S_4$ and the segment joining $S_1$ to $S_4$ are included, while the construction does not include $S_2$ or $S_3$. Roos and Seeger \cite{RoosSeeger2023} obtain a potentially larger region that uses the quasi-Assouad dimension of $E$.

The one-sided family comes from the marginals of $\sigma$. The push-forward of $\sigma$ under $(y,z)\mapsto y$ is a rotation-invariant probability measure on $S^{d-1}$, hence equal to $\sigma_{S^{d-1}}$, and the same holds for the second marginal. Consequently, $\mathcal T_E(f,g)\le\|g\|_\infty M_E^{\mathrm{sph}}(|f|)$ and $\mathcal T_E(f,g)\le\|f\|_\infty M_E^{\mathrm{sph}}(|g|)$, and together with \eqref{eq:spherical-boundedness-combined} these inequalities produce bounds at every point of $\mathcal L_{d,\beta}$.

The product family comes from \eqref{eq:CLM-product}. Dilating that estimate to radius $t$ and taking the supremum over $t\in E$, we find that
$$\mathcal T_E(f,g)(x)\lesssim(M_E^{\mathrm{sph}}(|f|^{\frac1\vartheta})(x))^\vartheta(M_E^{\mathrm{sph}}(|g|^{\frac1\vartheta})(x))^\vartheta.$$
Applying \eqref{eq:spherical-boundedness-combined} to $|f|^{\frac1\vartheta}$ and $|g|^{\frac1\vartheta}$ and using the generalized H\"older inequality, which remains valid when the output exponent is less than one, we obtain
$$\|\mathcal T_E(f,g)\|_{\frac{1}{\vartheta(b_1+b_2)}}\lesssim\|f\|_{\frac{1}{\vartheta a_1}}\|g\|_{\frac{1}{\vartheta a_2}}$$
for $(a_1,b_1),(a_2,b_2)\in\mathscr S_{d,\beta}$, with the usual convention for zero reciprocal exponents. This is the statement that $\mathcal T_E$ is bounded at every point of $\mathcal P_{d,\beta}$.

\begin{proof}[Proof of Theorem~\ref{thm:combined-localized}]
We have shown that $\mathcal T_E$ is bounded at every point of $\mathcal E_{d,\beta}$, $\mathcal J^{\mathrm{all}}_{d,\beta}$, $\mathcal L_{d,\beta}$ and $\mathcal P_{d,\beta}$, and the normalization of $\sigma$ accounts for the bound at $(0,0,0)$. Every point of $\mathcal C_{d,\beta}$ is a convex combination of finitely many points of these sets, so Lemma~\ref{lem:positive-interpolation}, applied to the supremum over a countable dense subset of $E$, extends boundedness to all of $\mathcal C_{d,\beta}$.
\end{proof}

The combined region strictly contains the region obtained from the Sobolev and asymmetric estimates, that is, $\mathcal M_{d,\beta}\subsetneq\mathcal C_{d,\beta}$. Indeed, every point of $\mathcal M_{d,\beta}$ has $z\le\frac12$, whereas the spherical maximal bounds place $(a,0,a)$ in $\mathcal C_{d,\beta}$ whenever $\frac12<a<\frac{d-1}{d-1+\beta}$, and this range is nonempty because $d-1>\beta$.

The asymmetric estimates also improve on direct interpolation of the Sobolev estimate with the $L^\infty\times L^\infty\to L^\infty$ bound. When $d=5$ and $\beta=1$, for example, the Sobolev family consists of the points $(x,y,\frac12)$ with $x,y\ge\frac12$ and $x+y<\frac{11}{10}$, so interpolation with the origin only produces points with $x+y<\frac{11z}{5}$. The point $(\frac{7}{12},\frac{13}{24},\frac{5}{12})$ supplied by Theorem~\ref{thm:mainasymmax} lies outside this region, since
$$x+y=\frac98>\frac{11}{12}=\frac{11z}{5}.$$
Figure~\ref{fig:intro-max-compare-overview} shows sections of the closure of the combined region.

\section{Necessary conditions}\label{sec:necessary-conditions}

We record necessary conditions for the fixed-radius and localized maximal estimates, writing $x=\frac1p$, $y=\frac1q$ and $z=\frac1r$. The first set of conditions is due to Borges and Foster \cite[Proposition~14]{BorgesFoster2024}.

\begin{proposition}[Fixed-radius necessary conditions]\label{prop:triangle-necessary}
Let $d\ge2$, let $1\le p,q\le\infty$ and $0<r\le\infty$, and suppose that $\|\mathcal T_1(f,g)\|_r\lesssim\|f\|_p\|g\|_q$ for all $f\in L^p(\mathbb R^d)$ and $g\in L^q(\mathbb R^d)$. Then
$$\begin{aligned}
z&\le x+y,\\
x+y&\le dz,\\
dx+y-z&\le d-1,\\
x+dy-z&\le d-1.
\end{aligned}$$
\end{proposition}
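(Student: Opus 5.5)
All four inequalities should follow from testing the estimate $\|\mathcal T_1(f,g)\|_r\lesssim\|f\|_p\|g\|_q$ on indicator functions of small or large sets, using that $\sigma$ is a probability measure whose marginals are $\sigma_{S^{d-1}}$. In each case I will bound $\mathcal T_1(f,g)$ from below by a constant times a power of $\delta$ (or $R$) on a set of known measure, then compare powers as $\delta\to0$ or $R\to\infty$. By the symmetry $(y,z)\mapsto(z,y)$ of $\sigma$, the fourth inequality follows from the third with $f$ and $g$ interchanged, so only three examples are needed.

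\emph{First inequality $z\le x+y$.} I will take $f=g=\mathbf 1_{B(0,R)}$ with $R$ large. For $|x|\le R-2$ every vertex $x-y$, $x-z$ lies in $B(0,R)$, so $\mathcal T_1(f,g)\ge1$ on $B(0,R-2)$. Hence $R^{dz}\lesssim\|\mathcal T_1(f,g)\|_r$. The right-hand side is $\|f\|_p\|g\|_q\approx R^{d(x+y)}$. Letting $R\to\infty$ gives $z\le x+y$; the cases with infinite exponents follow from the same conventions.

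\emph{Second inequality $x+y\le dz$.} I will take $f=g=\mathbf 1_{A_\delta}$, where $A_\delta=\{1-\delta\le|u|\le1+\delta\}$ is a $\delta$-neighborhood of the unit sphere. For $|x|\le\delta$ and every $(y,z)\in\mathcal I$ we have $|x-y|,|x-z|\in[1-\delta,1+\delta]$, so $\mathcal T_1(f,g)=1$ on $B(0,\delta)$. This gives $\|\mathcal T_1(f,g)\|_r\gtrsim\delta^{dz}$, while $\|f\|_p\|g\|_q\approx\delta^{x+y}$. Letting $\delta\to0$ yields $\delta^{dz}\lesssim\delta^{x+y}$, that is, $x+y\le dz$.

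\emph{Third inequality $dx+y-z\le d-1$.} I will take $f=\mathbf 1_{B(0,\delta)}$ and $g=\mathbf 1_{A_{C\delta}}$ for a suitable constant $C$. Fix $x$ with $||x|-1|\le\delta/2$. The set of $y\in S^{d-1}$ with $|x-y|\le\delta$ is a cap of $\sigma_{S^{d-1}}$-measure $\approx\delta^{d-1}$. For every such $y$ and every admissible $z$, the point $x-z$ lies at distance $1$ from $x-y\in B(0,\delta)$, so $|x-z|\in[1-\delta,1+\delta]$ and $g(x-z)=1$ automatically. Since the first marginal of $\sigma$ is $\sigma_{S^{d-1}}$, this gives $\mathcal T_1(f,g)(x)\gtrsim\delta^{d-1}$ on a shell of measure $\approx\delta$. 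Hence $\delta^{d-1+z}\lesssim\delta^{dx}\cdot\delta^{y}$, which is $dx+y-z\le d-1$. Exchanging the roles of $f$ and $g$ gives the last inequality.

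The only step needing care is this third example. The point is that the constraint on $g$ costs nothing once $x-y$ is pinned near the origin, so $g$ can be taken as small as a shell of width $\sim\delta$ and contributes the factor $\delta^{y}$ without lowering the lower bound. I would verify the cap measure and the inclusion $|x-z|\in[1-\delta,1+\delta]$ explicitly from the triangle inequality.
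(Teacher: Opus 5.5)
Your proof is correct and follows the route the paper indicates for this result, which it cites from Borges and Foster without a detailed proof: large balls, thin neighborhoods of the unit sphere, and a small ball paired with a thin shell, with the fourth inequality coming from the $(y,z)\mapsto(z,y)$ symmetry of $\sigma$. When you write it up, rename the spatial and vertex variables (for instance $w$, $u$, $v$, as in the paper's later proofs), since $x$, $y$, $z$ already denote the reciprocal exponents $\frac1p$, $\frac1q$, $\frac1r$.
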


These conditions come from testing the estimate on large balls, on thin neighborhoods of the unit sphere, and on a small ball paired with such a neighborhood, the last of which is the example behind the proof of Proposition~\ref{prop:mixed-radius-set-necessary-revision} below. They also hold for $\mathcal T_E$ whenever $E$ is nonempty, since a maximal bound implies the bound for any fixed $t\in E$, which is equivalent by scaling to a bound for $\mathcal T_1$. When $E$ is large, the spread of the radii produces further restrictions.

\begin{proposition}[Radius-set necessary conditions]\label{prop:radius-set-necessary}
Let $d\ge2$, and let $E\subset[1,2]$ be nonempty with $\overline{\dim}_{\mathrm M}E=\beta$. If the estimate $\|\mathcal T_E(f,g)\|_r\lesssim\|f\|_p\|g\|_q$ holds for all $f\in L^p(\mathbb R^d)$ and $g\in L^q(\mathbb R^d)$, where $1\le p,q\le\infty$ and $0<r\le\infty$, then
$$\begin{aligned}
dx&\le d-1+(1-\beta)z,\\
dy&\le d-1+(1-\beta)z.
\end{aligned}$$
\end{proposition}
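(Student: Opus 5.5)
We test the estimate on a pair consisting of a small ball for $g$ and a union of thin annuli for $f$. The annuli are adapted to a $\delta$-separated subset of $E$, mirroring the classical Stein-type example for spherical maximal functions over fractal radius sets. By the symmetry $y\leftrightarrow z$ of $\sigma$, it suffices to prove the first inequality $dx\le d-1+(1-\beta)z$.

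Fix small $\delta>0$. Choose a maximal $\delta$-separated set $\{t_1,\dots,t_N\}\subset E$. By the definition of upper Minkowski dimension, along a sequence $\delta\to0$ we have $N\gtrsim\delta^{-\beta+\varepsilon}$. Let $g=\mathbf 1_{B(0,\delta)}$, so that $\|g\|_q\approx\delta^{d/q}$, and let $f=\mathbf 1_{B(0,C\delta)}$ as well, with $\|f\|_p\approx\delta^{d/p}$. Consider $x_0$ with $\bigl||x_0|-t_\nu\bigr|\le\delta$ for some $\nu$. At radius $t=|x_0|$, the triangle average $\mathcal T_t(f,g)(x_0)$ requires $x_0-ty\in B(0,C\delta)$ and $x_0-tz\in B(0,\delta)$ simultaneously. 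This is impossible, since $|y-z|=1$. So a pure small-ball test does not work, and we instead let $f$ be the thin shell.

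Concretely, put $g=\mathbf 1_{B(0,\delta)}$ and
\[
f=\mathbf 1_{\{w:\ ||w|-1|\le C\delta\}},
\]
a shell of radius one about the origin, with $\|f\|_p\approx\delta^{1/p}$. For $x_0$ with $||x_0|-t|\le\delta$ and $t\in E$, choose the radius $t$ so that $x_0-tz\approx0$, i.e.\ $z\approx x_0/t$. This costs a $\delta^{d-1}$ fraction of $z$-directions on $S^{d-1}$. Given such $z$, the vertex $x_0-ty$ lies on a $(d-2)$-sphere at distance $t$ from both $x_0$ and $x_0-tz\approx0$, hence at distance $\approx t$ from the origin. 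Thus $f$ must be the shell of radius $t$, not radius one. This indicates that a single shell fails for multiple radii, and $f$ should be the union of shells $\bigcup_\nu\{||w|-t_\nu|\le C\delta\}$, whose measure is $\approx N\delta$. Then for $x_0$ in the union of annuli $\{||x_0|-t_\nu|\le\delta\}$, again of measure $\approx N\delta$, we take $t=t_\nu$. We get $\mathcal T_E(f,g)(x_0)\gtrsim\delta^{d-1}$, because the $y$-fiber lies almost entirely in the shell of radius $t_\nu$. The key step is this lower bound, which requires checking that a whole $O(1)$ fraction of the fiber sphere stays within $C\delta$ of the shell once $|x_0-tz|\le\delta$.

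Inserting these into the estimate gives
\[
\delta^{d-1}(N\delta)^{z}\lesssim (N\delta)^{x}\delta^{dy}.
\]
This produces a condition involving $y$. To obtain the stated inequality involving only $x$, we swap the roles and instead take $f=\mathbf 1_{B(0,\delta)}$ and $g$ the union of shells. The same computation yields
\[
\delta^{d-1}(N\delta)^{z}\lesssim\delta^{dx}(N\delta)^{y}.
\]
Taking $y=0$ is not allowed here, so the main obstacle is eliminating $y$. I would try taking $g$ bounded by $1$ and estimating more cleverly, replacing the shell input by $g\equiv\mathbf 1_{B(0,3)}$ so that $\|g\|_q\approx1$. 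The fiber condition on $g$ then becomes automatic, and only $f=\mathbf 1_{B(0,\delta)}$ matters. This gives $\mathcal T_E(f,g)(x_0)\gtrsim\delta^{d-1}$ on the $N$ annuli, hence
\[
\delta^{d-1}(N\delta)^{z}\lesssim\delta^{dx}.
\]
With $N\approx\delta^{-\beta}$ this reads $\delta^{d-1+(1-\beta)z}\lesssim\delta^{dx}$. Letting $\delta\to0$ gives $dx\le d-1+(1-\beta)z$ after sending $\varepsilon\to0$. The remaining verification is the measure computation: the set of $y$ with $|x_0-ty|<\delta$ has $\sigma$-measure $\approx\delta^{d-1}$ uniformly for $|x_0|\in[t-\delta,t+\delta]$, which follows because the first marginal of $\sigma$ is $\sigma_{S^{d-1}}$.
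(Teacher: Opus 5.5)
Your final test pair ($f=\mathbf 1_{B(0,C\delta)}$, $g$ the indicator of a fixed ball of radius about $3$, and the bound $\mathcal T_E(f,g)\gtrsim\delta^{d-1}$ from a $\delta$-cap of first vertices, via the first marginal of $\sigma$, on the union of $\approx N$ thin annuli about separated radii of $E$) is exactly the paper's argument. It is correct once the ball for $f$ is taken larger than the annulus half-width. The paper uses $B(0,4\delta)$ against annuli $\{w:||w|-t|<\delta\}$; with both equal to $\delta$, as you wrote, the cap $\{u:|x_0-tu|<\delta\}$ is empty when $||x_0|-t|=\delta$, so your uniformity claim needs this adjustment.

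The step you discarded was not actually an obstacle. Since $N\delta\lesssim1$, your bound $\delta^{d-1}(N\delta)^z\lesssim\delta^{dx}(N\delta)^y$ already implies $\delta^{d-1}(N\delta)^z\lesssim\delta^{dx}$. That swapped test pair is the one the paper uses to prove the stronger mixed condition of Proposition~\ref{prop:mixed-radius-set-necessary-revision}.
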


\begin{proof}
Fix $0<\delta<\frac{1}{10}$ and take $f=1_{B(0,4\delta)}$ and $g=1_{B(0,6)}$. Suppose that $||w|-t|<\delta$ for some $t\in E$, and restrict the first vertex $u$ of the triangle, which is the variable $y$ in $\sigma$, to the spherical cap of radius $c\delta$ about $\frac{w}{|w|}$, where $c>0$ is a small absolute constant. Then $|w-tu|<4\delta$, and $|w-tv|<6$ for every admissible second vertex $v$. Since the first marginal of $\sigma$ is the normalized measure on $S^{d-1}$ and the cap has measure comparable to $\delta^{d-1}$, we obtain $\mathcal T_E(f,g)(w)\gtrsim\delta^{d-1}$. Now choose a maximal $4\delta$-separated family of radii in $E$, with cardinality $M_\delta$. The corresponding annuli $\{w:||w|-t|<\delta\}$ are disjoint, their total volume is comparable to $\delta M_\delta$, and by maximality $M_\delta\ge N(E,8\delta)$. Thus, for $r<\infty$, the assumed estimate implies
$$\delta^{d-1}(\delta M_\delta)^{\frac1r}\lesssim\|f\|_p\|g\|_q\lesssim\delta^{dx}.$$
By the definition of the upper Minkowski dimension, for every $\varepsilon>0$ there are arbitrarily small $\delta$ with $M_\delta\gtrsim\delta^{-\beta+\varepsilon}$. Letting $\delta\to0$ along such a sequence and then letting $\varepsilon\to0$ yields $dx\le d-1+(1-\beta)z$. For $r=\infty$ the pointwise lower bound leads to the same conclusion with $z=0$, and interchanging $f$ and $g$ proves the other inequality.
\end{proof}

Testing with both inputs adapted to the set of radii leads to the following stronger conditions.

\begin{proposition}[Mixed necessary conditions for a set of radii]\label{prop:mixed-radius-set-necessary-revision}
Let $d\ge2$, let $E\subset[1,2]$ be nonempty, and put $\beta=\overline{\dim}_{\mathrm M}E$. Suppose that $\mathcal T_E:L^p(\mathbb R^d)\times L^q(\mathbb R^d)\to L^r(\mathbb R^d)$ is bounded, where $1\le p,q\le\infty$ and $0<r\le\infty$. Then, with $x=\frac1p$, $y=\frac1q$ and $z=\frac1r$,
\begin{equation}\label{eq:mixed-radius-set-necessary-revision}
\begin{aligned}
dx+(1-\beta)y&\le d-1+(1-\beta)z,\\
dy+(1-\beta)x&\le d-1+(1-\beta)z.
\end{aligned}
\end{equation}
\end{proposition}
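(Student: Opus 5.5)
We test the estimate on a pair of inputs that are both adapted to $E$: a small ball for $f$ and a union of thin shells for $g$, chosen so that $\mathcal T_E(f,g)$ is large on a union of thin shells. By the symmetry of $\sigma$ under $(y,z)\mapsto(z,y)$ it suffices to prove the first inequality in \eqref{eq:mixed-radius-set-necessary-revision}; the second follows by exchanging $f$ and $g$.

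Fix $0<\delta<\frac1{10}$. Choose a maximal $C\delta$-separated family $\{t_1,\dots,t_M\}\subset E$ with $M=M_\delta\ge N(E,2C\delta)$, as in the proof of Proposition~\ref{prop:radius-set-necessary}. Take $f=1_{B(0,4\delta)}$ and
$$g=1_G,\qquad G=\bigcup_{\nu=1}^M\{w:||w|-t_\nu|<4\delta\}.$$
Then $\|f\|_p\approx\delta^{dx}$ and $|G|\lesssim M\delta$, so $\|g\|_q\lesssim(M\delta)^{y}$.

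The lower bound should come from the following geometry. Fix $w$ with $||w|-t_\nu|<\delta$ and set $t=t_\nu$. As in the earlier proof, restrict the first vertex $u$ to a cap of radius $c\delta$ about $w/|w|$. This gives $|w-tu|<4\delta$ and costs $\delta^{d-1}$. For each such $u$, the second vertex $v$ ranges over the $(d-2)$-sphere $\{v:|v|=1,\ u\cdot v=\frac12\}$. Every such $v$ satisfies
$$|w-tv|\approx|tu-tv|=t,$$
up to an error of order $\delta$. Hence $w-tv$ lies in the shell $||\cdot|-t_\nu|<4\delta$, provided $C$ and $c$ are chosen appropriately, so $g(w-tv)=1$ for all admissible $v$. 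The conditional measure of this set of $v$ is therefore one, and we obtain $\mathcal T_E(f,g)(w)\gtrsim\delta^{d-1}$ on the union of the $M$ disjoint shells of width $\delta$, a set of measure $\approx M\delta$.

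Assuming $r<\infty$, the estimate then gives
$$\delta^{d-1}(M\delta)^{z}\lesssim\delta^{dx}(M\delta)^{y}.$$
Choose $\delta\to0$ along a sequence with $M_\delta\gtrsim\delta^{-\beta+\varepsilon}$, which is possible by the definition of the upper Minkowski dimension, and note that $M\delta$ is then small. Since $M\delta\approx\delta^{1-\beta+\varepsilon}$, the inequality becomes
$$\delta^{\,d-1+(1-\beta+\varepsilon)z}\lesssim\delta^{\,dx+(1-\beta+\varepsilon)y}.$$
Letting $\delta\to0$ forces $d-1+(1-\beta)z\ge dx+(1-\beta)y$ in the limit $\varepsilon\to0$. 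The case $r=\infty$ is handled the same way with $z=0$.

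One caveat concerns the sign of the exponent $z-y$ in the factor $(M\delta)^{z-y}$. We only use the sequence where $M$ is large, and the direction of the inequality is correct as written either way, since the bound is a comparison of two powers of $\delta$. Even so, the step that $M\delta\approx\delta^{1-\beta+\varepsilon}$ along the chosen sequence needs both $M\lesssim\delta^{-\beta-\varepsilon}$, which holds for all small $\delta$ by \eqref{eq:minkowski-cover}, and the lower bound along the sequence. I would use the upper bound for the $g$-norm side and the lower bound for the output side, separately, which yields the stated inequality.

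The main obstacle is the geometric step: verifying that every admissible second vertex lands in the correct $\delta$-shell, uniformly in the cap. This means controlling the perturbation $|w-tv|-t=O(\delta)$ and arranging that the constants $c$, $C$ and the shell width $4\delta$ are compatible. The other point to check is that the shells of width $4\delta$ around the separated radii overlap boundedly, so that $|G|\lesssim M\delta$.
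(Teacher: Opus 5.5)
Your proposal is correct and follows essentially the same route as the paper. The paper also tests $f=1_{B(0,4\delta)}$ against the union of $4\delta$-shells around a maximal separated family of radii. It uses the same observation $\bigl||w-tv|-t\bigr|\le|w-tu|$ (from $|u-v|=1$) to place every admissible second vertex in the shell, and reaches the same comparison $\delta^{d-1}(\delta M_\delta)^z\lesssim\delta^{dx}(\delta M_\delta)^y$.

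The only difference is in the limit. The paper passes to a sequence along which $\log M_\delta/\log\frac1\delta\to\beta$. You instead bound $(\delta M_\delta)^y$ above using $M_\delta\lesssim\delta^{-\beta-\varepsilon}$ and $(\delta M_\delta)^z$ below using $M_\delta\gtrsim\delta^{-\beta+\varepsilon}$. That works because $y,z\ge0$, and it is what your caveat correctly substitutes for the unjustified claim $M\delta\approx\delta^{1-\beta+\varepsilon}$.
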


\begin{figure}[htbp]
\centering
\begin{tikzpicture}
\begin{axis}[
    width=0.52\textwidth,
    height=0.52\textwidth,
    xmin=-0.03, xmax=1.03,
    ymin=-0.03, ymax=1.03,
    axis lines=left,
    xlabel={$\frac{1}{p}$},
    ylabel={$\frac{1}{q}$},
    ylabel style={rotate=-90},
    xtick={0,0.25,0.5,0.75,1},
    ytick={0,0.25,0.5,0.75,1},
    grid=major,
    grid style={gray!20},
    tick label style={font=\scriptsize},
    label style={font=\small},
    enlargelimits=false,
    clip=true,
    title={$d=4$, $\beta=\frac{1}{2}$, $\frac{1}{r}=\frac{3}{5}$}
]

\addplot[draw=none, fill=red!6]
coordinates {(0,{3/5}) ({3/5},0) ({33/40},0) ({3/4},{3/5}) ({18/25},{18/25}) ({3/5},{3/4}) (0,{33/40})} -- cycle;

\addplot[draw=red!80!black, very thick, dashed]
coordinates {(0,{33/40}) (0,{3/5}) ({3/5},0) ({33/40},0)};

\addplot[draw=red!80!black, very thick, dashed]
coordinates {({3/4},{3/5}) ({18/25},{18/25}) ({3/5},{3/4})};

\addplot[draw=red!55!black, very thick, dashdotted]
coordinates {({33/40},0) ({3/4},{3/5})};

\addplot[draw=red!55!black, very thick, dashdotted]
coordinates {({3/5},{3/4}) (0,{33/40})};

\addplot[only marks, mark=x, mark size=3.2pt, thick, color=black]
coordinates {({81/100},{1/5})};

\node[anchor=south east, font=\scriptsize] at (axis cs:0.80,0.22) {$\left(\frac{81}{100},\frac{1}{5}\right)$};

\end{axis}
\end{tikzpicture}
\caption{The region allowed by the necessary conditions of this section for $d=4$, $\beta=\frac12$ and $\frac1r=\frac35$, drawn in the $(\frac1p,\frac1q)$-plane. The dashed edges come from Propositions~\ref{prop:triangle-necessary} and~\ref{prop:radius-set-necessary}, and the dash-dotted edges are the mixed inequalities $4x+\frac12y\le\frac{33}{10}$ and $4y+\frac12x\le\frac{33}{10}$ of Proposition~\ref{prop:mixed-radius-set-necessary-revision}, where $x=\frac1p$ and $y=\frac1q$. The cross marks a point that satisfies the conditions of the first two propositions but not the mixed ones.}
\label{fig:necessary-max-red}
\end{figure}
\begin{proof}
Fix $0<\delta<\frac{1}{100}$ and choose a maximal $20\delta$-separated finite family $\{t_1,\ldots,t_{M_\delta}\}\subset E$. By maximality $N(E,40\delta)\le M_\delta$, while the separation forces $M_\delta\le N(E,\delta)$, and consequently
$$\limsup_{\delta\downarrow0}\frac{\log M_\delta}{\log\frac{1}{\delta}}=\beta.$$
For $a>0$ let
$$G_{a\delta}=\bigcup_{j=1}^{M_\delta}\{w\in\mathbb R^d:||w|-t_j|<a\delta\}.$$
The annuli making up $G_\delta$ are disjoint, and so are those making up $G_{4\delta}$. Since $1\le t_j\le2$, their volumes satisfy $|G_\delta|\approx|G_{4\delta}|\approx\delta M_\delta$, with constants independent of $\delta$ and of the selected radii. Set $f_\delta=1_{B(0,4\delta)}$ and $g_\delta=1_{G_{4\delta}}$. If the assumed estimate is stated only for smooth inputs, we use instead nonnegative smooth majorants that equal one on these sets and are supported in $B(0,5\delta)$ and $G_{5\delta}$, respectively. Their norms obey the same upper bounds, since the $20\delta$-separation keeps the enlarged annuli disjoint.

Now let $w\in G_\delta$, choose $t_j$ with $||w|-t_j|<\delta$, and put $e=\frac{w}{|w|}$. If $u\in S^{d-1}$ satisfies $|u-e|<\frac\delta4$, then
$$|w-t_ju|\le||w|-t_j|+t_j|e-u|<\frac32\delta,$$
and for every admissible second vertex $v$, that is, every $v$ with $(u,v)\in\mathcal I$, the relation $|u-v|=1$ implies
$$||w-t_jv|-t_j|=||w-t_jv|-|t_ju-t_jv||\le|w-t_ju|<\frac32\delta.$$
Thus $f_\delta(w-t_ju)=1$ and $g_\delta(w-t_jv)=1$ for all such $u$ and $v$. The first marginal of $\sigma$ is the normalized measure on $S^{d-1}$ and the cap $\{u\in S^{d-1}:|u-e|<\frac\delta4\}$ has measure comparable to $\delta^{d-1}$, so
$$\mathcal T_E(f_\delta,g_\delta)(w)\gtrsim\delta^{d-1}$$
for every $w\in G_\delta$. The assumed estimate therefore yields, including the case $r=\infty$ with $z=0$,
$$\delta^{d-1}(\delta M_\delta)^z\lesssim\delta^{dx}(\delta M_\delta)^y.$$
Choose a sequence $\delta_n\downarrow0$ along which $\frac{\log M_{\delta_n}}{\log\frac{1}{\delta_n}}\to\beta$. Taking logarithms, dividing by $\log\frac{1}{\delta_n}$ and passing to the limit, we arrive at
$$d-1-dx+(1-\beta)(z-y)\ge0,$$
which is the first inequality in \eqref{eq:mixed-radius-set-necessary-revision}. The second one follows by interchanging the inputs. The argument includes the cases $\beta=0$ and $\beta=1$, as well as $p,q,r=\infty$ with the usual convention for reciprocal exponents.
\end{proof}

At $\beta=0$ the inequalities \eqref{eq:mixed-radius-set-necessary-revision} reduce to the conditions $dx+y-z\le d-1$ and $x+dy-z\le d-1$ of Proposition~\ref{prop:triangle-necessary}, while at $\beta=1$ they reduce to $x,y\le\frac{d-1}{d}$. Since $(1-\beta)y\ge0$, Proposition~\ref{prop:mixed-radius-set-necessary-revision} also implies Proposition~\ref{prop:radius-set-necessary}, and the improvement is visible in Figure~\ref{fig:necessary-max-red}. For the parameters of that figure, the point $(x,y)=(\frac{81}{100},\frac15)$ satisfies the conditions of Propositions~\ref{prop:triangle-necessary} and~\ref{prop:radius-set-necessary}, but
$$4x+\frac12y=\frac{167}{50}>\frac{33}{10}=3+\frac12\cdot\frac35,$$
so it is excluded by the first mixed inequality. We do not know whether the necessary conditions of Propositions~\ref{prop:radius-set-necessary} and~\ref{prop:mixed-radius-set-necessary-revision} are sharp.

\end{document}